\crefname{equation}{}{} 
\numberwithin{equation}{section}
\newtheorem{theo}{Theorem}[section]
\newtheorem{lemm}[theo]{Lemma}
\newtheorem{prop}[theo]{Proposition}
\theoremstyle{definition}
\newtheorem{defi}[theo]{Definition}
\newtheorem{rema}[theo]{Remark}
\newtheorem{assume}[theo]{Assumption}
\newcommand{\D}{\mathbb D}
\newcommand{\E}{\mathbb E}
\newcommand{\F}{\mathbb F}
\newcommand{\bN}{\mathbb N}
\renewcommand{\P}{\mathbb P}
\newcommand{\R}{\mathbb R}
\newcommand{\1}{\mathbbm{1}}
\newcommand{\od}{\mathrm{d}}
\newcommand{\cB}{\mathcal B}
\newcommand{\cF}{\mathcal F}
\newcommand{\cG}{\mathcal G}
\newcommand{\cL}{\mathcal L}
\newcommand{\cN}{\mathcal N}
\newcommand{\cP}{\mathcal P}
\newcommand{\cR} {{\mathcal R}}
\newcommand{\cC}{\mathcal C}
\newcommand{\vz}{\varphi}
\newcommand{\Oz}{\Omega}
\newcommand{\oz}{\omega}
\newcommand{\tz}{\theta}
\newcommand{\Tz}{\Theta}
\newcommand{\dz}{\delta}
\newcommand{\Dz}{\Delta}
\newcommand{\sz}{\sigma}
\newcommand{\az}{\alpha}
\newcommand{\bz}{\beta}
\newcommand{\lz}{\lambda}
\newcommand{\wt}{\widetilde}
\newcommand{\ol}{\overline}
\newcommand{\fz}{\infty}
\newcommand{\chf}{\mathds{1}}
\newcommand{\bigslant}[2]{{\raisebox{.2em}{$#1$}\left/\raisebox{-.2em}{$#2$}\right.}}
\newenvironment{assump}[1]{
  
  \assumptionalt
}{\endassumptionalt}
\begin{document}

\title[FBSDEs]{Coupling of forward-backward stochastic differential equations on the Wiener space, and application on regularity}
\author[X. Zhou]{Xilin Zhou}
\address{X. Zhou, Department of Mathematics and Statistics, University of Jyväskylä, P.O. Box 35 (MaD), FI-40014, Finland}
\email{xilin.j.zhou@jyu.fi}

\keywords{forward-backward stochastic differential equations, coupling, Malliavin Sobolev space}

\begin{abstract}
    S.~Geiss and J.~Ylinen proposed the coupling method \cite{Geiss:Ylinen:21} to investigate the regularity for the solution to the backward stochastic differential equations with random coefficients.
    In this paper, we explore this method in setting for the forward-backward stochastic differential equation with random and Lipschitz coefficients,
    We obtain the regularity in time,
    and the Malliavin Sobolev $\D_{1,2}$ differentiability for the solution.
\end{abstract}

\maketitle
\tableofcontents

%%%%%%%%%%%%
\newpage

\section{Introduction}

We investigate the regularity of solutions to multidimensional forward-backward stochastic differential equations (FBSDEs) of the form
\begin{align}\label{fbsde:equ}
    \begin{cases}
        X_s = \xi + \int^s_t b(u,X_u,Y_u,Z_u)\,\od u + \int^s_t \left[\sz(u,X_u,Y_u) + A(u,Z_u)\right]\,\od W_u;\\
        \ \\
        Y_s = g(X_T) + \int^T_s f(u,X_u,Y_u,Z_u)\,\od u - \int^T_s Z_u\,\od W_u,
      \end{cases} 
\end{align}
where the predictable coefficients $b$, $\sz$, $A$, $f$ and $g$ are driven by
a multidimensional Brownian motion $W$.
We impose the following conditions on these coefficients:
\begin{itemize}
    \item[(1)] $b$, $\sz$, $A$, $f$ and $g$ are uniformly Lipschitz continuous in $(x,y,z)$;
    \item[(2)] $A$ is linear in $z$.
\end{itemize}
A process triple $\Tz = (X,Y,Z)$ is said to be a solution to equation \eqref{fbsde:equ}
if it satisfies the following conditions:
\begin{itemize}
    \item[(1)] $X$, $Y$ and $Z$ are adapted processes satisfying \eqref{fbsde:equ} in the It\^o's sense;
    \item[(2)] The norm $\|\Tz\|_{2,*}$ is finite, where, for any $p\geq 1$,
        \begin{equation*}
            \|\Tz\|^p_{p,*} := \E\left[\left|X^*_T\right|^p + \left|Y^*_T\right|^p + \left(\int^T_t \left|Z_s\right|^2\,\od s\right)^{p/2}\right],
        \end{equation*}
        with
        \begin{equation*}
            A^*_T := \sup_{s\in [t,T]} \left|A_s\right|,\qquad A \in \{X, Y\}.
        \end{equation*}
\end{itemize}
A solution $(X,Y,Z)$ satisfying these conditions
is referred to as an adapted $\cL^2$-solution of \eqref{fbsde:equ}.
It is clear that the solution $X$ and $Y$ admit continuous modifications (see, for example, \cite[Theorem 5.2.1]{Oksendal:03}).
Moreover, by martingale representation theorem, the process $Z$ can be a predictable process.
In this paper, we always assume that $X$ and $Y$ are continuous, and that $Z$ is predictable.
Due to a technical reason, we additionally assume that the value of the process $Z$ at starting time is $0$, i.e. $Z_t \equiv 0$,
which can be done without a loss of generality.

FBSDEs naturally emerge in the study of stochastic control problems through variational calculus.
When an optimal control exists, the forward equation characterizes the evolution of the controlled state,
while the backward equation represents the associated costate,
capturing the necessary conditions for optimality.
This interplay between forward and backward components plays a fundamental role in the formulation and solution of stochastic control problems,
bridging the dynamics of the system with optimal decision-making principles, see \cite{Karoui:Peng:Quenez:97,Ma:Yong:07,Oksendal:Sulem:05,Pardoux:Peng:90,Yong:Zhou:99}.

Regarding the existence and uniqueness of solutions to (strongly) coupled FBSDEs,
which are the focus of this paper, the first solvability result on a sufficiently small-time interval was obtained by Antonelli \cite{Antonelli:93},
and he also constructed a counterexample demonstrating that strongly coupled FBSDEs might fail to admit solutions over arbitrary intervals of finite length.
When all coefficients of the FBSDEs are uniformly Lipschitz continuous, the solvability on small-time intervals can be readily established using the contraction principle; see, for instance, \cite{Ma:Yong:07,Zhang:17}.
To extend solvability from small-time intervals to arbitrary intervals, several classical approaches have been developed, including:
the Four-Step Scheme introduced in \cite{Ma:Protter:Yong:94},
the method of continuation developed in \cite{Hu:Peng:95,Yong:97},
and the approach via the decoupling-field structure, presented in \cite{Ma2015,Zhang:17}.
Additionally, \cite{Ankirchner:Formm:Wendt:22} considers a transference method for the solvability for FBSDEs.
The $\cL^p$-theory for FBSDEs on small-time intervals was established by \cite{Yong:20}, and also by \cite{Xie:Yu:20}
with different assumptions.

For SDEs, path regularity can sometimes be established through direct computation.
However, for backward stochastic differential equations (BSDEs) and forward-backward stochastic differential equations (FBSDEs),
the situation is more subtle due to the presence of the auxiliary process $Z$ and the associated measurability issues.
Assuming deterministic coefficients, \cite{Ma:Zhang:02} and subsequently \cite{Imkeller:Reis} studied the regularity of solutions to BSDEs with Lipschitz continuous generators.
For decoupled FBSDEs, relevant results were obtained in \cite{Lionnet:Reis:Szpruch:15}, while fully coupled FBSDEs were investigated in \cite{Fromm:Imkeller:13}.
More recently, C.~Reisinger et al.~\cite{Reisinger:Stockinger:Zhang:20} considered the case of coupled McKean--Vlasov FBSDEs.
Under similar assumptions, the Malliavin--Sobolev differentiability of the solution components has also been established in above articles.
Furthermore, a recent series of papers \cite{Imkeller:Pellat:Menoukeu-Pamen:24,Pellat:Che-Fonka:Menoukeu-Pamen:24} extended the analysis to cases with random coefficients,
investigating the Sobolev differentiability property under appropriate structural and regularity conditions.

To investigate the regularity properties for BSDE in a more general setting,
in \cite{Geiss:Ylinen:21}, 
Geiss and Ylinen introduced the coupling method.
Recently, Geiss and Zhou \cite{Geiss:Zhou:24} established a characterization of the space $\D_{1,2}$ via the coupling method
and derived regularity results SDEs and fully decoupled FBSDEs.
This naturally raises the question of how the coupling method can be applied to investigate the regularity of FBSDEs.
In this article, by using different kinds of coupling operator,
we characterize the regularity properties for the solution to the FBSDEs with random coefficients.
To achieve this target, we provide an estimate for the ``coupling variance'' defined in \eqref{equ:decouplingVariance} for any given coupling operator.
If the coefficients are further assumed to be deterministic, we also obtain some new results for this classical case.

\medskip

This article is organized as follows:

Section~\ref{sec:pre} provides the fundamental background for our study and introduces the notation used throughout the article.
\smallskip

Section~\ref{sec:transference} revisits the coupling method introduced in \cite{Geiss:Ylinen:21} and summarizes relevant results from \cite{Geiss:Ylinen:21} and \cite{Geiss:Zhou:24}.
Some details related to the coupling method are shown in the Appendix (Section~\ref{sec:Appendix:coupling}).
\smallskip

Section~\ref{sec:lp} recalls the $\cL^p$-theory on small-time intervals established by J.~Yong.
Moreover, with the help of the decoupling field structure,
we try to extend the local result to the $\cL^p$-theory on arbitrary finite length time intervals.
The idea follows directly from the $\cL^2$-theory in \cite{Zhang:17}.
\smallskip

Section~\ref{sec:couplingVariance} investigates
the estimates for the $p$-coupling variance $CV_p([t,T])$ for $p\geq 2$.
This target is closely related to the question about regularity considered in \cite{Geiss:Ylinen:21} and \cite{Geiss:Zhou:24}.
We first obtain the estimates when the time interval is sufficiently small.
After that, we investigate the transference of the decoupling field under the coupling operator,
and then extend our estimates for $CV_p([t,T])$ to arbitrary time interval of finite length.
\smallskip

Section~\ref{sec:regularity}, as an application of the results in Section~\ref{sec:couplingVariance},
establishes the path-regularity ($\cL^p$ regularity in time)
and $\D_{1,2}$ differentiability of solutions to FBSDEs.
\smallskip

\section{Settings and preliminaries}
\label{sec:pre}

\subsection{Basic Notations}
Throughout this article, we use the following notation:
\begin{itemize}
    \item[(1)] $n,m,d\in\bN^+$ are fixed constants representing dimensions.
    \item[(2)] For any given $A,B\ge 0$,
    we write $A \lesssim B$ if there exists a constant $c \geq 1$,
    referred to as the equivalence constant, such that 
    $B \le c A$. We denote $A\sim B$ if both $A \lesssim B$ and $B\lesssim A$ hold.
    \item[(3)] For any $n_1,n_2\in\bN^+$ and $x\in\R^{n_1\times n_2}$, 
    we denote by $|x|$ the euclidean norm of $x = (x_{i,k})_{i = n_1,k = n_2}$, given by
    \begin{equation*}
        |x|:= \sqrt{\sum_{i = 1,k = 1}^{n_1,n_2} |x_{i,k}|^2}.
    \end{equation*}
    \item[(4)] Let $\lz$ be the Lebesgue measure on a given time interval,
    and ``$\od u$" to represent integration with respect to $\lz$.
\end{itemize}
\smallskip

\subsection{Stochastic Basis}
We consider a stochastic basis satisfying the usual conditions.
Let $T \in (0,\fz)$ be a fixed time horizon, and define
\[     W = (W_s)_{s \in [0, T]}
     = \left ( (W_{s,1},\ldots,W_{s,d})^\top \right )_{s \in [0, T]} \]
as a $d$-dimensional standard Brownian motion on a complete probability space $(\Omega,\cF^W,\P)$,
where $W_0\equiv 0$ and all paths are continuous.
We denote by  $\F^W := (\cF^W_s)_{s\in [0,T]}$ the augmented natural filtration of $W$ and
by $\cF^W := \cF^W_T$ the terminal $\sz$-algebra.
When no other Brownian motion is involved, we omit the superscript $W$ for simplicity.
\smallskip

\subsection{Norm spaces}
The following function spaces will be used throughout this article.
For $p\in [1,\fz]$, $q\in [1,\fz)$ and $N\in\bN^+$:
\begin{itemize}
        \item $\cL^0(\Oz,\cF;\R^N)$: The space of all random vectors
        $$v:(\Oz,\cF)\to (\R^N,\cB(\R^N)).$$
        \item $\cL^0([0,T] \times\Oz,\F;\R^N)$: The space of all $\F$-progressively measurable stochastic processes
        $$X: ([0,T]\times\Oz, \cB([0,T])\otimes \cF) \to (\R^N,\cB(\R^N).$$
        \item $\cL^q(\Oz,\cF,\P;\R^N)$: The space of all $v\in \cL^0(\Oz,\cF;\R^N)$ satisfying
        $$\|v\|^q_{\cL^q}:= \E_{\P}[|v|^q]<\fz.$$
        \item $\cL^{p,q}([0,T]\times\Oz,\F,\P;\R^N)$: The space of all $w\in \cL^0([0,T]\times\Oz,\F;\R^N)$ satisfying
        $$\|w\|^q_{\cL^{p,q}} := \E_{\P}\left[\left(\int^T_0|w_s|^p\,\od s\right)^{q/p}\right]<\fz,\quad\text{if}\quad p \in (0,\fz),$$
        and
        $$\|w\|^q_{\cL^{\fz,q}} := \E_{\P}\left[\sup_{s\in [0,T]} |w_s|^q\right] < \fz,\quad\text{if}\quad p = \fz.$$
        \item $\cL^{\fz,q}_C([0,T]\times\Oz,\F,\P;\R^N)$: The space of all $w\in \cL^0([0,T]\times\Oz,\F;\R^N)$ satisfying:
        \begin{itemize}
            \item[(1)] $w$ is $\F$-adapted, and $s\to w(s,\oz)$ is continuous for all $\oz$;
            \item[(2)] $w\in \cL^{\fz,q}([0,T]\times\Oz,\F,\P;\R^N)$.
        \end{itemize}
\end{itemize}

Additionally, we consider the $L^p$ spaces as quotient spaces:
for all $p\in [1,\fz)\cup\{0\}$ and $q\in [1,\fz]$,
$$L^p(\Oz,\cF,\P;\R^N) := \bigslant{\cL^{p}(\Oz,\cF;\R^N)}{\mathcal{N}_{\P}(\Oz,\cF)},$$ 
where
$$\mathcal{N}_{\P} := \{v\in \cL^{0}(\Oz,\cF;\R^N): \P(v = 0) = 1\}.$$
\smallskip

\subsection{Malliavin Sobolev Space  $\D_{1,2}$}
For a comprehensive introduction to the Malliavin Sobolev space
$\D_{1,2}\subseteq \cL^2(\Omega,\cF,\P)$,
we refer the reader to \cite{Nualart:06}. The Malliavin derivative is defined as a mapping
$$D: \D_{1,2} \to \cL^{2,2}(\Oz,\F,\P)$$
and $\D_{1,2}$ is a Hilbert space equipped with the norm
$$\| \xi\|_{\D_{1,2}}^2:=  \| \xi\|_{\cL^2}^2 + \| D\xi \|_{\cL^{2,2}}^2.$$
For all $p\in [2,\infty)$ the subspaces $\D_{1,p} \subseteq \D_{1,2}$ are defined via the norm
$$\| \xi\|_{\D_{1,p}} := \left ( \| \xi\|_{\cL^p}^p + \| D\xi \|_{\cL^{p,p}}^p \right )^\frac{1}{p}.$$

As an example, consider the case where $d = 1$ (i.e., the Brownian motion $W$ is one-dimensional).
Let $h:\R^K\to \R$ be a smooth function with compact support, and let
$$0 = t_0 < t_1 < \cdots < t_K = T.$$
If we define $\xi$ as
$$\xi=h(W_{t_1}-W_{t_0},\ldots, W_{t_K}-W_{t_{K-1}}),$$
then there is a representative of $D \xi$ given by
$$\sum_{k=1}^K \frac{\partial h}{\partial x_k}(W_{t_1}-W_{t_0},\ldots, W_{t_K}-W_{t_{K-1}}) \chf_{(t_{k-1},t_k]},$$
which belongs to $\cL^{2,2}(\Oz,\F,\P)$.
\smallskip

\subsection{The $\sigma$-algebra of predictable events}\label{sec:predictiable}
For a given $t\in [0,T)$,
let $\cP_{t,T}$ denote the predictable $\sz$-algebra
generated by the continuous $(\cF_s)_{s\in[t,T]}$-adapted processes
(see \cite[page 47]{Revuz:Yor:1999}).
In the special case $t = 0$, we write $\cP_T$ for simplicity.
\smallskip

\section{Coupling Method and Transference on the Wiener Space}
\label{sec:transference}

We recall the approach outlined in \cite[Chapter 3]{Geiss:Ylinen:21} and \cite[Section 3]{Geiss:Zhou:24}.

\subsection{Coupling Operator}\label{sec:coupling operator}
For each $i\in\{0,1\}$, consider a stochastic basis 
$$(\Omega^i,\cF^i,\P^i,(W^i_s)_{s\in[0,T]}),$$
where:
\begin{itemize}
    \item[(1)] $(\Omega^i,\cF^i,\P^i)$ is a complete probability space;
    \item[(2)] $W^i=(W^i_s)_{s\in[0,T]}$ is a $d$-dimensional Brownian motion with $W^i_0\equiv0$, and its paths are continuous.
\end{itemize}
Let $\F^i := (\cF^i_s)_{s\in[0,T]}$ denote the augmented natural filtration of $W^i$, and set $\cF^i:=\cF^i_T$.

Now, let $(R,\cR,\rho)$ be another probability space. \cite[Proposition 2.5(2)]{Geiss:Ylinen:21} shows that there exists a linear, isometric bijection
$$
\cC : L^0(R\times\Omega^0,\cR\otimes\cF^0,\rho\otimes\P^0;\R) \to L^0(R\times\Omega^1,\cR\otimes\cF^1,\rho\otimes\P^1;\R),
$$
where the metric on each space is defined by
$$
d_i(f,g) := \E_{\rho\otimes\P^i}\left[\frac{|f-g|}{1+|f-g|}\right].
$$

Informally, for any $\xi\in L^0(R\times\Omega^0,\cR\otimes\cF^0,\rho\otimes\P^0;\R)$,
it can be written into the form $\xi \sim f(r,W^0)$,
where $f$ is a map independent of the stochastic basis $(\Omega^0,\cF^0,\P^0)$.
The coupling operator can be sees as
$$\cC(\xi) \sim f(r,W^1).$$
The detailed construction will be shown in the Appendix~\ref{sec:Appendix:coupling}

By selecting different probability spaces $(R,\cR,\rho)$, we can specify two important cases:
\begin{align*}
\cC_0 &: L^0(\Omega^0,\cF^0,\P^0;\R) \to L^0(\Omega^1,\cF^1,\P^1;\R),\\[1mm]
\cC_T &: L^0\Bigl([0,T]\times\Omega^0,\cB([0,T])\otimes\cF^0,\frac{\lambda}{T}\otimes\P^0;\R\Bigr)\\[1mm]
    &\quad \to L^0\Bigl([0,T]\times\Omega^1,\cB([0,T])\otimes\cF^1,\frac{\lambda}{T}\otimes\P^1;\R\Bigr).
\end{align*}
This operator admits the following properties:

\begin{prop}[Proposition 2.5 and Lemma 2.8 in \cite{Geiss:Ylinen:21}]
\label{prop:2.5}
Let \(X, X_1, \dots, X_n \in L^0(R\times\Omega^0)\), and let \(Y_i \in \cC(X_i)\) for \(i=1,\dots,n\).
\begin{enumerate}
    \item The joint distribution is preserved; i.e. one has the following
    $$(Y_1,\dots,Y_n) \stackrel{d}{=} (X_1,\dots,X_n).$$
    \item For all Borel functions \(g:\R^n\to\R\),
    $$g(Y_1,\dots,Y_n) \in \cC\bigl(g(X_1,\dots,X_n)\bigr).$$
    \item For \(Y\in L^0([0,T]\times\Omega^1)\), we have \(Y\in \cC_T(X)\)
    if and only if there exists a null set \(N\subset [0,T]\) such that for all \(r\in [0,T]\setminus N\),
    $$Y(r,\cdot) \in \cC_0\bigl(X(r,\cdot)\bigr).$$
    \item Let $M$ be the complete metric space that is locally $\sz$-compact.
    If $f:[0,T]\times\Oz^0\times M \to \R$ satisfies
    \begin{itemize}
        \item $(r,\oz) \to f(r,\oz,x)$ is measurable for all $x\in M$;
        \item $x\to f(r,\oz,x)$ is continuous for all $(r,\oz)\in [0,T]\times\Oz^0$.
    \end{itemize}
    Then there is a function $g:[0,T]\times\Oz^1\times M \to \R$ satisfying all the conditions above, and $g(\cdot,x)\in \cC_0(f(\cdot,x))$ for all $x\in M$.
    Moreover, for all $\wt{g}$ which satisfies the same properties with $g$, it holds true that 
    $$\wt{g} = g, \quad\quad \frac{\lambda}{T}\otimes\P^0-a.s.$$
\end{enumerate}
\end{prop}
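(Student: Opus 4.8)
The plan is to read off all four statements from the explicit construction of $\cC$ given in Appendix~\ref{sec:Appendix:coupling}. Write $C_0 := C_0([0,T];\R^d)$ for the canonical path space and $\mu$ for the $d$-dimensional Wiener measure, so that $(W^i)_*\P^i = \mu$ for $i\in\{0,1\}$. The content of the construction is that every $X \in L^0(R\times\Oz^i)$ is represented, up to a $\rho\otimes\P^i$-null set, by a Borel functional $F\colon R\times C_0 \to \R$ via $X = F(r, W^i)$, and that $\cC$ acts by substitution of the driving path, $\cC\colon F(r,W^0) \mapsto F(r,W^1)$. Because $\cC$ is built as an isometry for the metrics $d_i$, which metrize convergence in probability, it is continuous and sends sequences that are Cauchy in probability to sequences that are Cauchy in probability; I will use this repeatedly.

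For (2), if $Y_i = f_i(r,W^1) \in \cC(X_i)$ with $X_i = f_i(r,W^0)$, then the functional representing $g(X_1,\dots,X_n)$ is $(r,w)\mapsto g(f_1(r,w),\dots,f_n(r,w))$, which is Borel; substituting $W^1$ for $W^0$ yields exactly $g(Y_1,\dots,Y_n)$, so $g(Y_1,\dots,Y_n)\in\cC(g(X_1,\dots,X_n))$ by the definition of $\cC$, the measurability being preserved under composition of Borel maps. For (1), set $F := (f_1,\dots,f_n)$, so that $(X_1,\dots,X_n) = F(r,W^0)$ and $(Y_1,\dots,Y_n) = F(r,W^1)$ with the same $F$; since $(r,W^0)$ under $\rho\otimes\P^0$ and $(r,W^1)$ under $\rho\otimes\P^1$ share the law $\rho\otimes\mu$, pushing forward by the single Borel map $F$ gives $(Y_1,\dots,Y_n)\stackrel{d}{=}(X_1,\dots,X_n)$.

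For (3), specialize to $R = [0,T]$, $\rho = \lz/T$. For a fixed $r$ the functional $w\mapsto f(r,w)$ represents both $X(r,\cdot)$ on $\Oz^0$ and, after substitution, an element of $\cC_0(X(r,\cdot))$ on $\Oz^1$; thus the fibrewise action of $\cC_T$ coincides with $\cC_0$. The equivalence then follows from Fubini's theorem: equality of two representatives in $L^0([0,T]\times\Oz^1,\lz/T\otimes\P^1)$ holds if and only if, for $\lz$-a.e.\ $r$, the corresponding sections agree in $L^0(\Oz^1,\P^1)$, which produces the exceptional null set $N$. The only care needed is to fix one jointly measurable representing functional $f$, so that all sections $w\mapsto f(r,w)$ are simultaneously well defined.

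The substance of the proposition, and the step I expect to be the main obstacle, is (4): transporting the continuity in the parameter $x\in M$ through an operator that is a priori defined only up to null sets for each fixed $x$. Using the local $\sigma$-compactness of $M$, I would reduce to building the coupled function over a $\sigma$-compact, hence separable, piece of $M$ and glue the pieces via the uniqueness statement; on such a piece fix a countable dense set $D$. For each $x\in D$ apply the parametrized $\cC$ to $f(\cdot,x)$ and choose a jointly measurable representative $g(\cdot,x)$; this is only countably many choices, so off a single null set all of them are defined at once. The plan is then: (i) use that $x\mapsto f(r,\oz,x)$ is continuous and that $\cC$ is continuous for convergence in probability to show that $(g(\cdot,x))_{x\in D}$ extends to a family $(g(\cdot,x))_{x\in M}$ continuous in $x$, and to upgrade this to a jointly measurable $g$ that is pathwise continuous in $x$; (ii) verify $g(\cdot,x)\in\cC_0(f(\cdot,x))$ for every $x\in M$ by approximating $x$ along $D$, using $f(\cdot,x_k)\to f(\cdot,x)$ in probability together with the continuity of $\cC$; and (iii) obtain uniqueness by noting that two admissible versions $g,\wt g$ agree a.s.\ on the countable set $D$ and hence, by continuity in $x$, agree $\frac{\lz}{T}\otimes\P^1$-a.e., which is the stated uniqueness. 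The delicate points are the simultaneous measurable-and-continuous selection of versions and the control of the exceptional null sets, both of which rely on the separability furnished by local $\sigma$-compactness together with the fact that $\cC$ is an isometry for a metric that metrizes convergence in probability.
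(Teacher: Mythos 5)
First, a remark on the comparison itself: the paper does not prove this proposition at all --- it is imported verbatim from \cite{Geiss:Ylinen:21} (Proposition 2.5 and Lemma 2.8 there), so your proposal can only be measured against the construction in Appendix~\ref{sec:Appendix:coupling} and the cited source. Your treatment of parts (1)--(3) is correct and is essentially the natural argument: since $(r,W^0)$ under $\rho\otimes\P^0$ and $(r,W^1)$ under $\rho\otimes\P^1$ have the same law, pushing a single Borel representative forward gives (1), composition of Borel maps gives (2), and Fubini applied to one jointly measurable representative gives (3).

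In part (4), however, step (i) of your plan has a genuine gap. You propose to extend the family $(g(\cdot,x))_{x\in D}$ from the countable dense set $D$ to all of $M$ using ``continuity of $\cC$ for convergence in probability.'' That tool cannot do this job: for $D\ni x_k\to x$, the isometry property only gives $g(\cdot,x_k)\to \cC(f(\cdot,x))$ \emph{in probability}, which yields a.s.\ convergence only along subsequences and, more seriously, gives no simultaneous control over the uncountably many limit points $x\in M$; it therefore does not produce a single modification $(r,\oz^1)\mapsto g(r,\oz^1,\cdot)$ that is continuous in $x$ for every $(r,\oz^1)$. The ingredient you are missing is a \emph{distributional} transfer of equicontinuity: since part (1) extends to countable families (joint laws are determined by finite-dimensional ones), the family $(g(\cdot,x))_{x\in D}$ has the same joint law as $(f(\cdot,x))_{x\in D}$; the event ``$x\mapsto f(r,\oz^0,x)$ restricted to $D\cap K$ is uniformly continuous for every compact $K$'' is measurable with respect to this countable family and has full measure, hence the corresponding event for $g$ also has full measure. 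Only then can one define $g(r,\oz^1,x)$ for $x\notin D$ as the pointwise limit along $D$ (setting $g:=0$ on the exceptional null set), after which your steps (ii) and (iii) --- identification $g(\cdot,x)\in\cC_0(f(\cdot,x))$ by uniqueness of limits in probability, and uniqueness of $g$ via density plus continuity --- go through as written. An alternative that avoids the issue entirely is to regard $f$ as a measurable map from $[0,T]\times\Oz^0$ into the Polish space $C(M)$ (using local $\sz$-compactness of $M$), apply the coupling operator at the level of Polish-space-valued random variables, and recover $g(\cdot,x)$ by composing with the evaluation maps via part (2); this is closer in spirit to how such statements are handled in \cite{Geiss:Ylinen:21}.
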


S.~Geiss and J.~Ylinen showed that
the operator $\cC_T$ maps a predictable process to a predictable process,
the case of progressively measurable process was not considered as not needed.
\begin{prop}[Proposition 2.12 in \cite{Geiss:Ylinen:21}]
    \label{prop:2.12}\,
    \begin{enumerate}
        \item[(1)] One has
        $$\cC_T\Bigl(L^0\bigl([0,T]\times\Omega^0, \cP^0_T\bigr)\Bigr) \subset L^0\bigl([0,T]\times\Omega^1, \cP^1_T\bigr),$$
        where $\cP^i_T$ denotes the predictable $\sigma$-algebra defined in Section \ref{sec:predictiable}.
        \item[(2)] If $X$ is a pathwise continuous and adapted process,
        then there exists a pathwise continuous adapted process $Y\in L^0([0,T]\times \Omega^1)$ such that
        $$Y(t) \in \cC_0\bigl(X(t)\bigr) \quad \text{for all } t\in [0,T].$$
    \end{enumerate}
    \end{prop}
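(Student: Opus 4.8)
The plan is to establish part (2) first, since part (1) builds on it, and then to bootstrap from continuous adapted processes to arbitrary predictable processes by a functional monotone class argument.

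For part (2), I would restrict the continuous adapted process $X$ to the countable set of rational times $\Q_T := \Q\cap[0,T]$. For each $q\in\Q_T$ the variable $X(q)$ is $\cF^0_q$-measurable, and I fix a representative $Y(q)$ of $\cC_0(X(q))$. The construction of $\cC_0$ (Appendix~\ref{sec:Appendix:coupling}) transports $\cF^0_q$-measurable variables to $\cF^1_q$-measurable ones — informally $X(q)\sim f(W^0)$ with $f$ depending only on the path up to time $q$, whence $\cC_0(X(q))\sim f(W^1)$ inherits the same dependence — so each $Y(q)$ may be taken $\cF^1_q$-measurable. By \Cref{prop:2.5}(1) the finite-dimensional distributions of $(Y(q))_{q\in\Q_T}$ coincide with those of $(X(q))_{q\in\Q_T}$, so the two families share the same law on $\R^{\Q_T}$. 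Since $X$ is pathwise continuous on the compact interval $[0,T]$, its restriction to $\Q_T$ is almost surely uniformly continuous; uniform continuity on $\Q_T$ is a Borel event of the law on $\R^{\Q_T}$, so $(Y(q))_{q\in\Q_T}$ is almost surely uniformly continuous as well and extends to a pathwise continuous process $Y$ on $[0,T]$, which is $\F^1$-adapted because the augmented filtration is right-continuous.

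It then remains to check $Y(t)\in\cC_0(X(t))$ for every $t$, not only rational ones. For irrational $t$ I choose $q_n\to t$ in $\Q_T$; pathwise continuity gives $X(q_n)\to X(t)$ and $Y(q_n)\to Y(t)$ almost surely, hence in the metrics $d_0,d_1$. As $\cC_0$ is an isometric bijection for $d_0,d_1$ — in particular continuous in probability — and $Y(q_n)\in\cC_0(X(q_n))$, passing to the limit yields $Y(t)\in\cC_0(X(t))$, proving (2). Turning to part (1), I would first treat the generators: given a continuous adapted $X$, part (2) produces a continuous adapted $Y$ with $Y(t)\in\cC_0(X(t))$ for all $t$, and then \Cref{prop:2.5}(3) forces $Y\in\cC_T(X)$; being continuous and adapted, $Y$ is $\cP^1_T$-measurable, so $\cC_T(X)$ admits a predictable representative. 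Since $\cP^0_T$ is generated by the continuous adapted processes, I would promote this to all of $L^0([0,T]\times\Omega^0,\cP^0_T)$ by a functional monotone class theorem: let $\mathcal{H}$ be the bounded $\cP^0_T$-measurable $X$ for which $\cC_T(X)$ has a $\cP^1_T$-measurable representative. By linearity of $\cC_T$, $\mathcal{H}$ is a vector space containing the constants and the continuous adapted processes, and it is closed under bounded monotone limits, since $\cC_T$ is a $d_i$-isometry: a bounded limit $X_n\to X$ gives $\cC_T(X_n)\to\cC_T(X)$ in probability, and an almost sure subsequential limit of predictable processes is predictable. The monotone class theorem delivers all bounded $\cP^0_T$-measurable processes, and a truncation $X^{(N)}:=(-N)\vee X\wedge N\to X$ with the same continuity-in-probability argument removes boundedness.

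The \emph{main obstacle} is the continuity transfer in part (2): $\cC_0$ acts on equivalence classes and a priori says nothing about sample-path regularity, so the crux is to manufacture a genuine continuous version out of the countable family $(\cC_0(X(q)))_{q\in\Q_T}$. This is precisely where the distributional identity \Cref{prop:2.5}(1) is indispensable, as it imports the modulus-of-continuity information from $X$ to $Y$, and where continuity in probability of $\cC_0$ pins down the values at the remaining times. A secondary technical point is the adaptedness claim that $\cC_0$ sends $\cF^0_q$-measurable variables to $\cF^1_q$-measurable ones, which is not part of \Cref{prop:2.5} and must be read off from the explicit construction of the coupling operator in the Appendix.
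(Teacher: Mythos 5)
A preliminary remark: the paper does not prove this proposition at all --- it is quoted verbatim from \cite{Geiss:Ylinen:21}, so there is no in-paper proof to compare against, and your proposal has to be judged on its own merits. Its architecture is sound and is the natural one: for part (2), restriction of $X$ to $\Q\cap[0,T]$, transfer of the (product-measurable) uniform-continuity event through the identity of laws on $\R^{\Q\cap[0,T]}$ from Proposition~\ref{prop:2.5}(1), extension by continuity, adaptedness of the extension via right-continuity of the augmented filtration, and identification of the values at irrational times through the $d_0/d_1$-isometry; for part (1), combination of part (2) with Proposition~\ref{prop:2.5}(3) on the generators of $\cP^0_T$, followed by a functional monotone class argument using linearity and continuity in probability of $\cC_T$, with predictability of limits secured by passing to a.e.\ convergent subsequences of predictable representatives. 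All of these steps are correct as written (one small omission: the monotone class theorem needs the generating class to be stable under multiplication, which bounded continuous adapted processes indeed are).

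The genuine gap is the step you dismiss as ``secondary'': the claim that $\cC_0$ carries $\cF^0_q$-measurable random variables to (versions of) $\cF^1_q$-measurable ones. This is not secondary --- it is the only source of adaptedness for $Y$, and without it part (2) fails (a continuous but non-adapted modification is useless here) and part (1) loses its generators. Moreover, it cannot simply be ``read off'' from the construction in Appendix~\ref{sec:Appendix:coupling}: there, a representative of $\cC_0(X(q))$ is $\widehat{f}\circ J^1$, where $\widehat{f}$ factorizes through the Haar-coefficient functionals of the \emph{whole} path on $[0,T]$, so measurability with respect to the path up to time $q$ is invisible in that factorization. To close the gap you need two facts. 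First, the coupling fixes the Brownian motion, i.e.\ $W^1_s\in\cC_0(W^0_s)$ for every $s$: this does follow from the construction, because $W^0_s$ is a limit in $\cL^2$ of linear combinations of the Haar integrals $\xi^0_k$, the operator maps $\xi^0_k$ to $\xi^1_k$, and it is linear and $d_0/d_1$-isometric, hence continuous in probability. Second, an approximation argument: write $X(q)=g\bigl((W^0_s)_{s\in\Q\cap[0,q]}\bigr)$ $\P^0$-a.s.\ for a Borel function $g$ of countably many coordinates, approximate $g$ in probability by Borel functions of finitely many coordinates, apply Proposition~\ref{prop:2.5}(2) to each of these together with the first fact, and pass to the limit using the isometry; an a.s.\ subsequential limit is then measurable with respect to the $\P^1$-augmentation of $\sigma(W^1_s:\, s\le q)$, which is $\cF^1_q$. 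With these two ingredients inserted, your proof is complete; as submitted, the adaptedness of $Y$ --- and hence both assertions of the proposition --- rests on an unproved claim.
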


    Moreover, after the transference under the coupling operator,
    the random functions can still keep their ``structures".
    \begin{prop}[Remark 3.4 in \cite{Geiss:Ylinen:21}]
        \label{prop:3.4}
        Assume that 
        \begin{itemize}
            \item $h^0:[0,T]\times\Omega^0 \to C(\R^N)$ is $(\cP^0,\cB(C(\R^N)))$-measurable;
            \item $h^1:[0,T]\times\Omega^1 \to C(\R^N)$ satisfies $h^1(x)\in \cC_T(h^0(x))$ for all $x\in\R^N$;
            \item $h^1$ is $(\cP^1,\cB(C(\R^N)))$-measurable.
        \end{itemize}
        Then the following holds:
        \begin{enumerate}[{(1)}]
        \item $h^0(\cdot,\cdot,0) \stackrel{d}{=} h^1(\cdot,\cdot,0)$ with respect to
              $\lambda\times\P^0$ and $\lambda\times\P^1$.
        \item Given a continuous $H:\R^N\times \R^N \to [0,\infty)$ such that, for all $(t,\omega^0,x_0,x_1)$,
              \[ |h^0(t,\omega^0,x_0) - h^0(t,\omega^0,x_1)| \le H(x_0,x_1), \]
             then we can choose $h^1$ such that,  for all $(t,\omega^1,x_0,x_1)$,
             \[ |h^1(t,\omega^1,x_0) - h^1(t,\omega^1,x_1)| \le H(x_0,x_1). \]
        \end{enumerate}
        \end{prop}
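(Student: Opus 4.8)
The plan is to treat the two assertions separately: the first is an immediate consequence of distribution preservation, whereas the second requires a density argument to upgrade almost-sure inequalities to everywhere inequalities.

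For (1), I would simply invoke \Cref{prop:2.5}(1) with $n=1$, applied to the single process $h^0(\cdot,\cdot,0)\in L^0([0,T]\times\Omega^0)$ and its coupling $h^1(\cdot,\cdot,0)\in\cC_T(h^0(\cdot,\cdot,0))$ (the hypothesis $h^1(x)\in\cC_T(h^0(x))$ evaluated at $x=0$). This yields $h^0(\cdot,\cdot,0)\stackrel{d}{=}h^1(\cdot,\cdot,0)$ as real random variables on the probability spaces $([0,T]\times\Omega^i,\frac{\lambda}{T}\otimes\P^i)$. Since the statement is phrased with respect to $\lambda\otimes\P^i$ rather than $\frac{\lambda}{T}\otimes\P^i$, and these two measures differ only by the global constant factor $T$, multiplying the equality of pushforward measures by $T$ gives the claim verbatim. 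This step is routine.

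For (2), the key observation is that the bound $|h^0(t,\omega^0,x_0)-h^0(t,\omega^0,x_1)|\le H(x_0,x_1)$ holds pointwise, hence in particular $\frac{\lambda}{T}\otimes\P^0$-a.s., for every fixed pair $(x_0,x_1)$. Fixing such a pair, the map $(u,v)\mapsto H(x_0,x_1)-|u-v|$ is Borel, so by \Cref{prop:2.5}(1), applied to the pair $(h^0(x_0),h^0(x_1))$ and its coupling $(h^1(x_0),h^1(x_1))$ (using that the same operator $\cC_T$ produces both $h^1(x_0)$ and $h^1(x_1)$), one obtains $|h^1(\cdot,\cdot,x_0)-h^1(\cdot,\cdot,x_1)|\le H(x_0,x_1)$, $\frac{\lambda}{T}\otimes\P^1$-a.s. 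I would then fix a countable dense set $D\subseteq\R^N$ and take the union, over the countably many pairs in $D\times D$, of the corresponding exceptional null sets, producing a single null set $N^{\ast}\subseteq[0,T]\times\Omega^1$ off of which the inequality holds simultaneously for all $(x_0,x_1)\in D\times D$. For $(t,\omega^1)\notin N^{\ast}$, continuity of $x\mapsto h^1(t,\omega^1,x)$ (guaranteed since $h^1$ takes values in $C(\R^N)$) together with continuity of $H$ lets me pass from $D\times D$ to all of $\R^N\times\R^N$ by density, yielding the desired bound for every $(x_0,x_1)$.

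It remains to handle the exceptional set $N^{\ast}$, and this is where I expect the only genuine obstacle to lie: I want the inequality for \emph{all} $(t,\omega^1)$, not merely almost all, while preserving both $(\cP^1,\cB(C(\R^N)))$-measurability and the coupling relation $h^1(x)\in\cC_T(h^0(x))$. The natural fix is to redefine $h^1(t,\omega^1,\cdot):=0$ for $(t,\omega^1)\in N^{\ast}$; since $N^{\ast}$ is $\frac{\lambda}{T}\otimes\P^1$-null (hence $\lambda\otimes\P^1$-null), this modification changes $h^1(\cdot,\cdot,x)$ only on a null set for each $x$, so the relation $h^1(x)\in\cC_T(h^0(x))$, being a statement in $L^0$, is untouched, while the redefined map stays continuous in $x$. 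The delicate point is measurability: one must arrange $N^{\ast}$ inside $\cP^1$ (or argue within its $\P^1$-completion) so that the modified $h^1$ remains predictably measurable, which is exactly the freedom encoded in the phrase ``we can choose $h^1$''. This is precisely the type of null-set bookkeeping governed by the essential-uniqueness clause of \Cref{prop:2.5}(4), which I would invoke to certify the existence of a predictably measurable, $C(\R^N)$-valued representative with all the required properties.
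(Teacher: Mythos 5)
Your proof is correct, but note that the paper itself offers no proof to compare against: Proposition~\ref{prop:3.4} is imported as a citation (Remark 3.4 in \cite{Geiss:Ylinen:21}), so your argument is a self-contained derivation of the cited fact rather than a variant of an in-paper argument. Both halves are sound: part (1) is exactly Proposition~\ref{prop:2.5}(1) with $n=1$ plus the harmless rescaling between $\tfrac{\lambda}{T}\otimes\P^i$ and $\lambda\otimes\P^i$; part (2) correctly chains distribution preservation for the pair $(h^0(x_0),h^0(x_1))$ (giving the a.s.\ bound for each fixed pair), a countable dense grid $D$, the continuity of $h^1(t,\omega^1,\cdot)$ and of $H$, and a null-set modification. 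The only imprecision is your closing paragraph: the measurability of the modified $h^1$ is not delicate, and neither Proposition~\ref{prop:2.5}(4) nor any completion argument is needed. Since $h^1$ is $(\cP^1,\cB(C(\R^N)))$-measurable and evaluation $f\mapsto f(x)$ is continuous (hence Borel) on $C(\R^N)$, each map $h^1(\cdot,\cdot,x)$ is $\cP^1$-measurable, so the exceptional set
\begin{equation*}
N^{\ast}=\bigcup_{(x_0,x_1)\in D\times D}\bigl\{(t,\omega^1):\ |h^1(t,\omega^1,x_0)-h^1(t,\omega^1,x_1)|>H(x_0,x_1)\bigr\}
\end{equation*}
lies in $\cP^1$ by construction. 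Redefining $h^1(t,\omega^1,\cdot)$ to be the zero function of $C(\R^N)$ on $N^{\ast}$ therefore yields a map that is still $(\cP^1,\cB(C(\R^N)))$-measurable (a two-case splice of measurable maps along a $\cP^1$-set), satisfies the bound for \emph{all} $(t,\omega^1,x_0,x_1)$ because $H\ge 0$, and represents the same $L^0$-equivalence classes since $N^{\ast}$ is $\tfrac{\lambda}{T}\otimes\P^1$-null, so the relation $h^1(x)\in\cC_T(h^0(x))$ is preserved. With that observation your proof is complete as written.
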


\subsection{A Special Setting}\label{sec:specialSetting}

In \cite{Geiss:Ylinen:21} and \cite{Geiss:Zhou:24}
the authors adopt a particular framework for the coupling operator that plays a crucial role in investigation for the regularity properties of stochastic differential equations.

For $d\ge 1$ and $T>0$, let 
$$
W = (W_t)_{t\in[0,T]} \quad \text{and} \quad W'=(W'_t)_{t\in[0,T]}
$$
be two $d$-dimensional Brownian motions with continuous paths and $W_0 = W'_0 \equiv 0$,
defined respectively on complete probability spaces $(\Omega,\cF,\P)$ and $(\Omega',\cF',\P')$,
where $\cF$ and $\cF'$ are the completions of $\sigma(W_t: t\in[0,T])$ and $\sigma(W'_t: t\in[0,T])$. 

Extend these processes to the product space
$$
\overline{\Omega}:=\Omega\times\Omega',\quad \overline{\P}:=\P\times\P',\quad \overline{\cF}=\ol{\cF\otimes\cF'}^{\overline{\P}},
$$
i.e. $W(\oz,\oz') := W(\oz)$ and $W'(\oz,\oz') := W'(\oz')$.

Given a measurable function $\varphi: [0,T]\to [0,1]$, by Lévy's characterization theorem the process
$$
W^{\varphi}_t:=\int_0^t\sqrt{1-\varphi(u)^2}\,\od W_u + \int_0^t\varphi(u)\,\od W'_u,\quad t\in[0,T],
$$
defines a $d$-dimensional Brownian motion on $(\overline{\Omega},\overline{\cF},\overline{\P})$. We denote its $\overline{\P}$-augmented natural filtration by
$$
\cF^{\varphi}_t:=\sigma\bigl(W^{\varphi}_s: s\in[0,T]\bigr)\vee\overline{\cN},
$$
where $\overline{\cN}$ is the collection of $\overline{\P}$-null sets.

Following the construction in Section~\ref{sec:coupling operator},
we now set
$$
(\Omega^0,\cF^0,\P^0,W^0):=(\overline{\Omega},\cF,\overline{\P},W),
$$
and
$$
(\Omega^1,\cF^1,\P^1,W^1):=(\overline{\Omega},\cF^{\varphi},\overline{\P},W^{\varphi}).
$$
The corresponding coupling operators $\cC_0$ and $\cC_T$,
as defined in Section~\ref{sec:coupling operator},
will now be denoted collectively by $\cC_{\varphi}$.
In particular, if the function $\vz$ is an indicator function on $[0,T]$,
i.e. $\varphi(u)=\1_{(a,c]}(u)$ for some $0 \leq a < c \leq T$, we simply write 
$$\cC_{(a,c]}:=\cC_{\varphi}.$$
Also, we adopt the following notational conventions:
\begin{itemize}
    \item[(1)] For any stochastic process \(X\), we denote by \(X^{\varphi}_u\) an element of the equivalence class \(\cC_{\varphi}(X)_u\).
    \item[(2)] For any random variable \(\xi\), we write \(\xi^{\varphi} \in \cC_{\varphi}(\xi)\).
    \item[(3)] For any function \(h:[0,T]\times\Omega\times\R^N\to\R^M\) satisfying
    \begin{itemize}
        \item $h$ is continuous in $x\in\R^N$;
        \item $h(\cdot,x)$ is measurable on $[0,T]\times\Omega$ for any $x\in\R^N$,
    \end{itemize}
    we denote by \(h^{\varphi}\) with the corresponding properties such that, for every \(x\in\R^N\), one has 
    $$h^{\varphi}(\cdot,x) \in \cC_{\varphi}\bigl(h(\cdot,x)\bigr).$$
\end{itemize}

\smallskip

The transference theorem for SDE (see Appendix \ref{sec:Appendix:trans}) shows that, if the following stochastic differential equation
$$
X_s = \xi + \int^s_t b(u,X_u)\,\od u + \int^t_s \sz(u,X_u)\,\od W_u
$$
has an $\cL^2$-solution, and the random coefficients $b$ and $\sz$ satisfies
some proper measurability conditions, then $X^{\vz}$ is an $\cL^2$-solution to the equation
$$
X^{\vz}_s = \xi^{\vz} + \int^s_t b^{\vz}(u,X^{\vz}_u)\,\od u + \int^t_s \sz^{\vz}(u,X^{\vz}_u)\,\od W^{\vz}_u.
$$
Moreover, by Proposition \ref{prop:3.4}, $(b^{\vz},\sz^{\vz})$ may share the same regularity with $(b,\sz)$.

\smallskip

S.~Geiss and J.~Ylinen established the following key estimate,
which helps with the estimation for the path-regularity of the solution to BSDEs.

\begin{theo}[Lemma 4.23 in \cite{Geiss:Ylinen:21}]\label{theo:path-regularity:BSDE}
    For all $0 \le a < c \le T$, consider the $\sigma$-algebra
    $$
    \cG^c_a := \sigma\bigl(W_t,\,t\in[0,a]\bigr) \vee \sigma\bigl(W_t - W_c,\,t\in[c,T]\bigr).
    $$
    Set $\vz = \1_{(a,c]}$. Then, for all $p\in [1,\infty]$ and $\xi\in \cL^p(\Omega,\cF,\P)$, one has
    $$
    \frac{1}{2^p}\,\E_{\ol{\P}}\Bigl[\bigl|\xi-\xi^{\vz}\bigr|^p\Bigr] \le \E_{\ol{\P}}\Bigl[\Bigl|\xi-\E\bigl[\xi\mid\cG^c_a\bigr]\Bigr|^p\Bigr] \le \E_{\ol{\P}}\Bigl[\bigl|\xi-\xi^{\vz}\bigr|^p\Bigr].
    $$
\end{theo}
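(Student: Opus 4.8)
The plan is to reduce the statement to an elementary inequality for a pair of conditionally i.i.d.\ random variables. First I would use $\cF = \sigma(W)\vee\cN$ to write $\xi = f(W)$ for some measurable functional $f$ of the Brownian path, so that by the defining property of the coupling operator (Proposition~\ref{prop:2.5}(2)) one has $\xi^{\vz} = f(W^{\vz})$. Since $\vz = \1_{(a,c]}$ satisfies $\vz^2=\vz$, whence $\sqrt{1-\vz^2}=\1_{[0,a]\cup(c,T]}$, the coupled Brownian motion takes the explicit form $W^{\vz}_t = W_{t\wedge a} + (W'_{t\wedge c}-W'_{t\wedge a}) + (W_t - W_c)\1_{\{t>c\}}$; that is, $W^{\vz}$ agrees with $W$ on $[0,a]$ and on the shifted increments over $(c,T]$, while its increments over $(a,c]$ are supplied by the independent copy $W'$.

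Next I would decompose the path of $W$ into three independent blocks: $\alpha := (W_s)_{s\in[0,a]}$, $\beta := (W_{a+s}-W_a)_{s\in[0,c-a]}$, and $\gamma := (W_{c+s}-W_c)_{s\in[0,T-c]}$. Writing $\xi = F(\alpha,\beta,\gamma)$ and letting $\beta'$ denote the corresponding block of $W'$, the previous step gives $\xi^{\vz} = F(\alpha,\beta',\gamma)$. Crucially, $\cG^c_a = \sigma(\alpha)\vee\sigma(\gamma)$, and $\beta,\beta'$ are independent of each other and of $(\alpha,\gamma)$ with the same (Brownian) law. Hence, conditionally on $\cG^c_a$, the variables $\xi$ and $\xi^{\vz}$ are i.i.d., and $\E[\xi\mid\cG^c_a]$ equals the conditional mean $m$ of that common conditional law. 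Since $\xi$ and $\cG^c_a$ depend only on $W$, this conditional expectation is unchanged on passing to $\ol{\P} = \P\times\P'$.

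With this structure in hand the problem becomes pointwise in the conditioning: for i.i.d.\ $X,Y$ with common mean $m$ I must show $\frac{1}{2^p}\,\E|X-Y|^p \le \E|X-m|^p \le \E|X-Y|^p$. The upper bound follows from Jensen's inequality applied to the convex map $|\cdot|^p$, using $X-m = \E[X-Y\mid X]$; the lower bound follows from the triangle inequality $|X-Y|\le|X-m|+|Y-m|$ together with $(u+v)^p\le 2^{p-1}(u^p+v^p)$ and the fact that $X,Y$ are identically distributed. Integrating these conditional inequalities against $\ol{\P}$ and recalling $\E[\xi\mid\cG^c_a]=m$ yields the claim; the case $p=\fz$ is identical with essential suprema in place of expectations.

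I expect the only genuine obstacle to be the bookkeeping in the first two paragraphs, namely rigorously justifying that $\cC_{\vz}$ acts as a resampling of the middle block and that the resulting conditional i.i.d.\ structure is exactly the one encoded by $\cG^c_a$; once that identification is secured, the two inequalities are elementary and dimension-free (the Euclidean norm being convex, the argument is unchanged for $\R^N$-valued $\xi$).
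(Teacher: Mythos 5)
This theorem is quoted from \cite{Geiss:Ylinen:21} (Lemma 4.23); the present paper contains no proof of it, so your argument has to stand on its own, and it does. Your proof is correct, and it is essentially the mechanism behind the original lemma: for $\vz=\1_{(a,c]}$ the coupling acts as a resampling of the middle increment block, and the two inequalities are then conditional Jensen (upper bound, via $\xi-\E[\xi\mid\cG^c_a]=\E[\xi-\xi^{\vz}\mid\sigma(\alpha,\gamma)\vee\sigma(\beta)]$ in your block notation) and the triangle inequality with the convexity bound $(u+v)^p\le 2^{p-1}(u^p+v^p)$ plus conditional equidistribution (lower bound). Your explicit formula for $W^{\vz}$, the identification $\cG^c_a=\sigma(\alpha)\vee\sigma(\gamma)$, and the conditional i.i.d.\ structure of $(\xi,\xi^{\vz})$ given $\cG^c_a$ are all accurate.

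The one step you correctly flag as needing care --- that $\xi^{\vz}$ is $f(W^{\vz})$ for the \emph{same} Borel functional $f$ with $\xi=f(W)$ --- is not an obstacle but is exactly what the construction of the coupling operator recalled in Section~\ref{sec:Appendix:coupling} delivers: a representative of $\xi$ is factored as $\widehat f\circ J^0$ through the Haar coefficients of $W$, and $\xi^{\vz}$ is by definition $\widehat f\circ J^1$ with $J^1$ the Haar coefficients of $W^{\vz}$; since the Haar-coefficient map is itself a Borel function of the path, you may take $f=\widehat f\circ\Phi$ with $\Phi$ that map, and your block factorization $\xi=F(\alpha,\beta,\gamma)$, $\xi^{\vz}=F(\alpha,\beta',\gamma)$ follows because the path-reconstruction map from the three blocks is measurable and reproduces $W$ and $W^{\vz}$ exactly. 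Two minor points of rigor you implicitly use and should state: the conditional i.i.d.\ claim is a statement about regular conditional distributions (available since path space is Polish), and $\E_{\P}[\xi\mid\cG^c_a]$ is also a version of $\E_{\ol{\P}}[\xi\mid\cG^c_a]$ by Fubini, which justifies moving between $\P$ and $\ol{\P}$. With those remarks added, the proof is complete, dimension-free, and covers $p=\fz$ as you indicate.
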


Recently, S.~Geiss and X.~Zhou provided a characterization of the Malliavin Sobolev space $\D_{1,2}$
via the coupling operator. The result is as follows:

\begin{theo}[Corollary 4.3 in \cite{Geiss:Zhou:24}]\label{theo:Malliavin_sobolev}
    For all $\xi\in \cL^2(\Omega,\cF,\P)$, one has $\xi\in \D_{1,2}$ if and only if
    $$
    \sup_{r\in (0,1]} \frac{\E_{\ol{\P}}\Bigl[\bigl|\xi-\xi^{\vz_r}\bigr|^2\Bigr]}{r^2} < \infty,
    $$
    where for each $r\in (0,1]$, $\varphi_r$ is a constant function with value $r$.
\end{theo}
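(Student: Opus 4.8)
The plan is to reduce the statement to the Wiener--Itô chaos decomposition and an explicit evaluation of the coupling variance $\E_{\ol\P}[|\xi-\xi^{\vz_r}|^2]$ in terms of the chaos kernels. Write $\xi=\sum_{n=0}^{\infty}I_n(f_n)$, where $I_n$ is the $n$-fold multiple integral with respect to $W$ and $f_n$ is a symmetric kernel (with $\|f_n\|$ its $L^2$-norm), so that $\E_{\ol\P}[|\xi|^2]=\sum_{n}n!\,\|f_n\|^2$ and, by the standard chaos formula, $\|D\xi\|_{\cL^{2,2}}^2=\sum_{n}n\cdot n!\,\|f_n\|^2$, with $\xi\in\D_{1,2}$ precisely when this last sum is finite. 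First I would check that $\cC_{\vz_r}$ carries each $W$-chaos onto the corresponding chaos built from $W^{\vz_r}$: on elementary integrands $\cC_{\vz_r}$ replaces $W$-increments by $W^{\vz_r}$-increments by Proposition~\ref{prop:2.5}(2), and since it preserves distributions (Proposition~\ref{prop:2.5}(1)) it is an $\cL^2$-isometry and therefore commutes with the $\cL^2$-limits defining the $I_n$. Hence $\xi^{\vz_r}=\sum_n I_n^{\vz_r}(f_n)$, where the multiple integrals are now taken with respect to $W^{\vz_r}$.

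Next I would compute the coupling variance explicitly. Since $\vz_r\equiv r$, one has $W^{\vz_r}=\sqrt{1-r^2}\,W+r\,W'$, so under $\ol\P$ the pair $(W,W^{\vz_r})$ is jointly Gaussian with first-chaos correlation $\E_{\ol\P}[(\int h\,\od W)(\int g\,\od W^{\vz_r})]=\sqrt{1-r^2}\,\langle h,g\rangle$. The heart of the argument is the cross-orthogonality identity
\[
\E_{\ol\P}\big[I_n(f_n)\,I_m^{\vz_r}(f_m)\big]=\delta_{nm}\,n!\,(1-r^2)^{n/2}\,\|f_n\|^2,
\]
which follows from the Hermite-polynomial representation of multiple integrals together with $\E[H_n(U)H_m(V)]=\delta_{nm}\,n!\,\rho^n$ for jointly standard Gaussian $(U,V)$ of correlation $\rho=\sqrt{1-r^2}$ (equivalently, from Mehler's formula, since $\oz'$-averaging of $\xi^{\vz_r}$ is the Ornstein--Uhlenbeck semigroup at time $t$ with $e^{-t}=\sqrt{1-r^2}$). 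Combined with $\E_{\ol\P}[|\xi^{\vz_r}|^2]=\E_{\ol\P}[|\xi|^2]$ this yields the closed form
\[
\E_{\ol\P}\big[|\xi-\xi^{\vz_r}|^2\big]=2\sum_{n=1}^{\infty}n!\,\|f_n\|^2\Big(1-(1-r^2)^{n/2}\Big).
\]

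Finally, dividing by $r^2$ reduces everything to controlling $\psi_n(r):=(1-(1-r^2)^{n/2})/r^2$ for $n\ge1$ and $r\in(0,1]$. Bernoulli's inequality gives $(1-r^2)^{n/2}\ge 1-\tfrac n2 r^2$ for $n\ge2$, and a direct check settles $n=1$, yielding the uniform bound $\psi_n(r)\le n$; moreover $\psi_n(r)\to \tfrac n2$ as $r\to0$. The upper bound shows $\E_{\ol\P}[|\xi-\xi^{\vz_r}|^2]/r^2\le 2\,\|D\xi\|_{\cL^{2,2}}^2$ for every $r$, so $\xi\in\D_{1,2}$ forces the supremum to be finite. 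Conversely, if the supremum is at most $M$, then for each fixed $N$ one has $\sum_{n=1}^N n!\,\|f_n\|^2\,\psi_n(r)\le M/2$; letting $r\to0$ in this finite sum gives $\sum_{n=1}^N n\cdot n!\,\|f_n\|^2\le M$, and $N\to\infty$ gives $\|D\xi\|_{\cL^{2,2}}^2\le M<\infty$, i.e. $\xi\in\D_{1,2}$.

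I expect the main obstacle to be the second step: rigorously identifying the action of $\cC_{\vz_r}$ on each chaos with the same-kernel multiple integral against $W^{\vz_r}$ (the isometry and limit-passage argument), and establishing the correlation identity with the exact factor $(1-r^2)^{n/2}$. Once this closed form is in hand, the two-sided control of $\psi_n$ and the equivalence with membership in $\D_{1,2}$ are routine.
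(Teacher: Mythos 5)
Your proof is correct, but one caveat about the comparison: this paper does not prove Theorem \ref{theo:Malliavin_sobolev} at all --- it is imported verbatim as Corollary 4.3 of \cite{Geiss:Zhou:24} --- so there is no internal proof to measure your argument against; what you have written is in effect a self-contained reconstruction of the proof in the cited literature, and it follows what is essentially that same (and the natural) route. The three pillars of your argument hold up. First, the action of $\cC_{\vz_r}$ on chaoses is justified exactly as you indicate: linearity together with preservation of joint distributions (Proposition \ref{prop:2.5}(1)) makes $\cC_{\vz_r}$ an $\cL^2$-isometry, and Proposition \ref{prop:2.5}(2) applied to polynomials of Brownian increments gives $I_n(f_n)\mapsto I_n^{\vz_r}(f_n)$ after passing to $\cL^2$-limits. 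Second, the factor $(1-r^2)^{n/2}$ is precisely Mehler's formula: conditioning $\xi^{\vz_r}$ on $\cF^W$ is the Ornstein--Uhlenbeck semigroup at time $t$ with $e^{-t}=\sqrt{1-r^2}$, which yields $\E_{\ol{\P}}\bigl[|\xi-\xi^{\vz_r}|^2\bigr]=2\sum_{n\ge1}n!\,\|f_n\|^2\bigl(1-(1-r^2)^{n/2}\bigr)$; this identity is the constant-$\vz$ case of the chaos formula for the decoupling variance that underlies \cite{Geiss:Ylinen:21} and \cite{Geiss:Zhou:24}, and it checks out against the first-chaos example $\xi=\int h\,\od W$, where the left-hand side equals $2(1-\sqrt{1-r^2})\|h\|^2$. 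Third, your handling of $\psi_n(r)=(1-(1-r^2)^{n/2})/r^2$ is sound: Bernoulli's inequality gives $\psi_n\le n/2$ for $n\ge 2$, the case $n=1$ follows from $\sqrt{1-r^2}\ge 1-r^2$, and in the converse direction you correctly truncate to a finite number of chaoses (legitimate since all summands are nonnegative) before letting $r\to0$ and then $N\to\infty$, so there is no interchange-of-limits gap. In short: correct, self-contained, and in substance the same proof as the one this paper outsources to \cite{Geiss:Zhou:24}.
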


In this article, we focus on estimating a special structure,
which we refer to as the $p$-coupling variance. On the time interval $[t,T]$, for $t \leq t_1 < t_2\leq  T$, we define:
\begin{align}\label{equ:decouplingVariance}
    CV_p([t_1,t_2]) := &\E\left[\sup_{u\in [t_1,t_2]} \Bigl|X_u - X^{\vz}_u\Bigr|^p + \sup_{u\in [t_1,t_2]} \Bigl|Y_u - Y^{\vz}_u\Bigr|^p\right.\notag \\
    &\quad\quad + \left(\int_{t_1}^{t_2} \Bigl|Z_u - Z^{\vz}_u\Bigr|^2\,du\right)^{p/2}\notag\\
    &\quad\quad \left.+ \left(\int_{t_1}^{t_2} \Bigl(1-\sqrt{1-(\varphi(u))^2}\Bigr)|Z_u|^2\,du\right)^{p/2}\right],
\end{align}
where $(X,Y,Z)$ is a solution to an FBSDE.

By Theorem~\ref{theo:path-regularity:BSDE} and Theorem~\ref{theo:Malliavin_sobolev},
the first two terms of $CV_p([t,T])$ offer the information for the regularity of the processes $X$ and $Y$.
The final term in $CV_p([t,T])$ naturally emerges in the analysis and plays a crucial role in establishing path-regularity results for the process $Z$.
The primary objective of this article is to analyze and estimate this term in the FBSDE setting.

\subsection{Technical Considerations Related to the Coupling Operator}

In this article, when applying the coupling operator within the framework described in Section~\ref{sec:specialSetting},
we begin by extending any random variable, stochastic process, or random function originally defined on $(\Omega,\cF,\P)$
to the canonical product space $(\overline{\Omega},\overline{\cF},\overline{\P})$.
Once this extension is established, we proceed with the coupling operator transformation.

In some cases, the SDE under consideration may not originate at time $0$.
However, the coupling operator remains well-defined and retains the same fundamental properties in such scenarios.
For a detailed technical justification, see \cite[Section 3]{Geiss:Zhou:24}.
Consequently, we do not distinguish this case explicitly in our analysis.

\section{$\cL^p$ Theory for FBSDEs}
\label{sec:lp}
In this section, we first recall the $\cL^p$ theory for forward-backward stochastic differential equations (FBSDEs) with random coefficients as developed in \cite{Yong:20}.
After stating the original theorem, we present several natural propositions that follow from it.
The original theorem is established under the assumption that the time interval is sufficiently small. By employing the decoupling field structure and standard techniques,
this $\cL^p$ theory can be extended to cover time intervals of arbitrary length.

We consider FBSDEs of the form
\begin{align}\label{fbsde:equ:general}
    \begin{cases}
        X_s = \xi + \displaystyle\int_t^s b(u,X_u,Y_u,Z_u)\,du + \displaystyle\int_t^s \mu(u,X_u,Y_u,Z_u)\,dW_u,\\[1mm]
        Y_s = g(X_T) + \displaystyle\int_s^T f(u,X_u,Y_u,Z_u)\,du - \displaystyle\int_s^T Z_u\,dW_u,
      \end{cases} 
\end{align}
where the random coefficients satisfy the following conditions:
\begin{assume}\label{coefficient:assume:general}\,
    \begin{enumerate}[(1)]
        \item \textbf{Domain:} The coefficient functions are defined on their appropriate spaces:
        \begin{align*}
            b: &\ [0,T]\times\Omega\times\R^n\times\R^m\times\R^{m\times d}\to \R^n,\\[1mm]
            \mu: &\ [0,T]\times\Omega\times\R^n\times\R^m\times\R^{m\times d}\to \R^{n\times d},\\[1mm]
            f: &\ [0,T]\times\Omega\times\R^n\times\R^m\times\R^{m\times d}\to \R^m,\\[1mm]
            g: &\ \Omega\times\R^n\to \R^m.
        \end{align*}
        \item \textbf{Measurability:} For all $\tz=(x,y,z)\in\R^n\times\R^m\times\R^{m\times d}$, the functions
        $$
        b(\cdot,\tz),\quad \mu(\cdot,\tz),\quad f(\cdot,\tz)
        $$
        are predictable on $[0,T]\times\Oz$, and for all $x\in\R^n$, $g(\cdot,x)$ is $\cF_T$-measurable.
        \item \textbf{Lipschitz Continuity:} There exist positive constants $L_{b,i}$, $L_{\mu,i}$, $L_{f,i}$ (for $i=1,2,3$) and $L_g$,
        such that for all $s\in[0,T]$, $\omega\in\Omega$, and all $\tz=(x,y,z)$, $\tz'=(x',y',z')\in\R^n\times\R^m\times\R^{m\times d}$,
        \begin{align*}
            |b(s,\omega,\tz)-b(s,\omega,\tz')| &\le L_{b,1}|x-x'| + L_{b,2}|y-y'| + L_{b,3}|z-z'|,\\[1mm]
            |\mu(s,\omega,\tz)-\mu(s,\omega,\tz')| &\le L_{\mu,1}|x-x'| + L_{\mu,2}|y-y'| + L_{\mu,3}|z-z'|,\\[1mm]
            |f(s,\omega,\tz)-f(s,\omega,\tz')| &\le L_{f,1}|x-x'| + L_{f,2}|y-y'| + L_{f,3}|z-z'|,\\[1mm]
            |g(\omega,x)-g(\omega,x')| &\le L_g|x-x'|.
        \end{align*}
    \end{enumerate}
\end{assume}

Moreover, for a given constant $p\in[2,\infty)$, we impose the following integrability conditions on the coefficients:

\begin{assump}{H(p)}\label{fbsde:basic:assume:p:general}
    For each $s\in[0,T]$, define
    \begin{equation}\label{equ:def:0function}
    h^0(s,\omega) := h(s,\omega,0,0,0),\quad \text{for } h\in\{b,f,\mu\}.
    \end{equation}
    Then, the following conditions are assumed: for the given $t\in [0,T)$ (the starting time for the equation)
    \begin{enumerate}[(1)]
        \item the function $b^0$ belongs to $\cL^{1,p}([t,T]\times\Omega,\F,\P;\R^n)$;
        \item the function $f^0$ belongs to $\cL^{1,p}([t,T]\times\Omega,\F,\P;\R^m)$;
        \item the function $\mu^0$ belongs to $\cL^{2,p}([t,T]\times\Omega,\F,\P;\R^{n\times d})$;
        \item the function $g(0)$ belongs to $\cL^p(\Omega,\cF_T,\P;\R^m)$;
        \item the initial condition $\xi$ belongs to $\cL^p(\Omega,\cF_t,\P;\R^n)$.
    \end{enumerate}
\end{assump}

\subsection{\(\cL^p\) Theory on a Small Time Interval}

We begin by recalling the Burkholder--Davis--Gundy (BDG) inequalities.
The following lemma ensures that for any $p\in (0,\infty)$
there exist constants $0 < \underline{K}_p < \overline{K}_p$
(depending only on $p$) such that for any $G \in \cL^{2,p}([0,T]\times\Omega,\F,\P)$
and for every $t\in[0,T]$, one has the following statement.
\begin{lemm}\label{BDG:p}
There exist constants $0<\underline{K}_p<\overline{K}_p<\fz$, depending only on $p$, such that, a.s.,
\begin{align*}
    \underline{K}_p \, \E\Biggl[\Bigl(\int_t^T |G_s|^2\,ds\Bigr)^{\frac{p}{2}} \Bigg| \cF_t\Biggr] 
    &\le \E\Biggl[\sup_{s\in[t,T]}\Bigl|\int_t^s G_u\,dW_u\Bigr|^p \Bigg| \cF_t\Biggr] \\
    &\le \overline{K}_p \, \E\Biggl[\Bigl(\int_t^T |G_s|^2\,ds\Bigr)^{\frac{p}{2}} \Bigg| \cF_t\Biggr].
\end{align*}
The constants \(\underline{K}_p\) and \(\overline{K}_p\) are independent of the dimension of $G$ and $W$.
\end{lemm}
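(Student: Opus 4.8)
The plan is to obtain the conditional inequalities from the classical \emph{unconditional} Burkholder--Davis--Gundy inequality with constants that do not depend on the dimension, the passage from the unconditional to the conditional statement being achieved by an $\cF_t$-measurable localization.

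First I would reduce to the unconditional case. Write $M_s := \int_t^s G_u\,\od W_u$ for $s\in[t,T]$, an $\R^n$-valued continuous martingale with matrix-valued quadratic variation $\od[M]_u = G_u G_u^\top\,\od u$, so that $\int_t^T|G_u|^2\,\od u$ is its trace. Fix $A\in\cF_t$ and test against $\1_A$. Since $\1_A$ is $\cF_t$-measurable, hence $\cF_u$-measurable for every $u\ge t$, it factors out of the Itô integral, $\int_t^s \1_A G_u\,\od W_u = \1_A M_s$, and likewise out of the quadratic-variation term. The integrand $\1_A G$ is again progressively measurable and lies in $\cL^{2,p}$, so the unconditional inequality applied to it gives, after inserting $\1_A = \1_A^p$,
\[
\underline{K}_p\,\E\Bigl[\1_A\bigl(\int_t^T|G_u|^2\,\od u\bigr)^{p/2}\Bigr] \le \E\Bigl[\1_A\sup_{s\in[t,T]}|M_s|^p\Bigr] \le \overline{K}_p\,\E\Bigl[\1_A\bigl(\int_t^T|G_u|^2\,\od u\bigr)^{p/2}\Bigr].
\]
As $A\in\cF_t$ is arbitrary, the characterizing property of conditional expectation upgrades these to the claimed a.s.\ conditional inequalities, with the \emph{same} constants. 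Thus everything reduces to producing dimension-free constants in the unconditional inequality.

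Next I would establish the unconditional, dimension-free bounds. For $p\ge 2$ I would apply Itô's formula to $|M_s|^p$: the Hessian of $x\mapsto|x|^p$ contracted against $G_uG_u^\top$ equals $p|M_u|^{p-2}|G_u|^2 + p(p-2)|M_u|^{p-4}|G_u^\top M_u|^2$, and the elementary bound $|G_u^\top M_u|\le |G_u|\,|M_u|$ collapses this to the \emph{dimension-free} quantity $p(p-1)|M_u|^{p-2}|G_u|^2$. Combined with Doob's maximal inequality and Hölder's inequality (exponents $p/(p-2)$ and $p/2$), this yields the upper bound with a constant depending only on $p$; to divide by the relevant power of $\E[\sup_s|M_s|^p]$ I would first localize along $\tau_N := \inf\{s:|M_s|\ge N\}$ to guarantee finiteness, then let $N\to\fz$ by monotone convergence. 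For the lower bound I would use $\int_t^T|G_u|^2\,\od u = |M_T|^2 - 2\int_t^T M_u\cdot\od M_u$, apply the already-proved upper bound to the scalar martingale $\int_t^s M_u\cdot\od M_u$ (whose bracket is dominated by $\sup_s|M_s|^2\int_t^T|G_u|^2\,\od u$), and absorb the cross term via Young's inequality; all constants again remain dimension-free. The subquadratic range $p\in(0,2)$, where $|x|^p$ fails to be $C^2$, is handled by the standard regularization $|x|^p\rightsquigarrow(|x|^2+\varepsilon)^{p/2}$ together with the classical domination arguments, which preserve dimension independence.

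The main obstacle I anticipate is precisely the dimension independence of $\underline{K}_p$ and $\overline{K}_p$: the conditioning step is a clean factorization, but the Itô-based upper bound must be arranged so that the matrix-valued bracket contracts to the scalar expression $p(p-1)|M|^{p-2}|G|^2$ rather than to something carrying a factor of $n$ or $d$, and the lower bound together with the case $p<2$ requires the auxiliary-martingale and regularization devices above. The localization ensuring a priori finiteness before dividing is routine but must be stated in order to make the algebra legitimate.
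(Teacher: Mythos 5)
The paper itself offers no proof of this lemma: it is explicitly introduced as a \emph{recollection} of the classical Burkholder--Davis--Gundy inequalities (``We begin by recalling\dots''), so there is no in-paper argument to compare yours against step by step. Your proposal supplies what the paper takes as known, and it follows the standard route. The indicator-localization step is exactly right: for $A\in\cF_t$ the factorization $\int_t^s \1_A G_u\,\od W_u = \1_A\int_t^s G_u\,\od W_u$ and the arbitrariness of $A$ upgrade the unconditional inequalities to the a.s.\ conditional ones with the \emph{same} constants, so dimension independence is indeed the only real issue. Your It\^o-formula argument for $p\ge 2$ is also sound: the contraction of the Hessian of $|x|^p$ against $G_uG_u^\top$ is $p|M_u|^{p-2}|G_u|^2 + p(p-2)|M_u|^{p-4}|G_u^\top M_u|^2 \le p(p-1)|M_u|^{p-2}|G_u|^2$, a scalar bound with no factor of $n$ or $d$, and Doob plus H\"older plus the $\tau_N$-localization then give the dimension-free upper bound.

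Two soft spots remain, both in the part you compress. First, your lower bound for $2\le p<4$ applies ``the already-proved upper bound'' to the scalar martingale $\int_t^\cdot M_u\cdot\od M_u$ at exponent $p/2\in[1,2)$; at that point the upper bound has only been established for exponents $\ge 2$, so as written the step is circular. Second, for $p\in(0,2)$ the regularization $(|x|^2+\varepsilon)^{p/2}$ by itself cannot work: since $p/2-1<0$, the It\^o drift term is only controlled by $\varepsilon^{p/2-1}$, which blows up as $\varepsilon\to 0$, and Doob's maximal inequality is unavailable at exponents $\le 1$. Both gaps are closed by the domination machinery you mention only in passing (Lenglart's inequality or good-$\lambda$ estimates): since $\E[|M_\tau|^2]=\E\bigl[\int_t^\tau |G_u|^2\,\od u\bigr]$ for all stopping times $\tau$, Lenglart yields $\E[\sup_s|M_s|^{2q}]\le c_q\,\E\bigl[\bigl(\int_t^T|G_u|^2\,\od u\bigr)^q\bigr]$ and its converse for all $q\in(0,1)$, which covers $p\in(0,2)$ and also the exponent $p/4\in[1/2,1)$ needed in your lower-bound step. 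These arguments involve only the scalar processes $\sup_s|M_s|$ and $\int_t^\cdot|G_u|^2\,\od u$, hence preserve dimension independence. With that substitution made explicit, your proof is complete; for the exponents $p\ge 2$ actually used in the paper, it is the standard derivation and correct.
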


J.~Yong showed the existence and uniqueness of the $\cL^p$-solution for \eqref{fbsde:equ:general}.
\begin{lemm}[Theorem 2.3 in \cite{Yong:20}]\label{p-exist}
Assume Assumption \ref{coefficient:assume:general}. For some $p\in[2,\infty)$, if
\begin{equation}\label{p-condition}
    \begin{cases}
        (\overline{K}_p)^{1/p}\left(\dfrac{p}{p-1} + 2(\underline{K}_p)^{-1/p}\dfrac{2p-1}{p-1}\right)L_g L_{\mu,3} < 1, & \text{if } p\in (2,\infty),\\[1mm]
        L_g L_{\mu,3} < 1, & \text{if } p=2,
    \end{cases}
\end{equation}
then there exists a constant \(\delta > 0\), depending on all the Lipschitz constants, such that if \(T-t\le \delta\), the FBSDE \eqref{fbsde:equ:general} admits a unique solution \(\Tz=(X,Y,Z)\) in \(\cL^p\)
where $X$ and $Y$ are continuous processes and $Z$ is predictable.
\end{lemm}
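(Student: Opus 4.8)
The plan is to construct the solution by the Banach fixed point theorem on the complete metric space $\cB_p[t,T]$ of adapted triples $\Tz=(X,Y,Z)$ with $X,Y$ continuous, $Z$ predictable and $\|\Tz\|_{p,*}<\infty$, equipped with the norm $\|\cdot\|_{p,*}$. First I would define a solution map $\Phi$ that fully decouples the system by freezing the input triple in every coefficient slot. Concretely, given $\Tz=(X,Y,Z)$, set
$\tilde X_s:=\xi+\int_t^s b(u,X_u,Y_u,Z_u)\,du+\int_t^s \mu(u,X_u,Y_u,Z_u)\,dW_u$, which is well defined by direct integration, and then let $(\tilde Y,\tilde Z)$ be the unique solution of the linear BSDE $\tilde Y_s=g(\tilde X_T)+\int_s^T f(u,X_u,Y_u,Z_u)\,du-\int_s^T \tilde Z_u\,dW_u$, obtained from the martingale representation of $g(\tilde X_T)+\int_t^T f(u,X_u,Y_u,Z_u)\,du$. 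Putting $\Phi(\Tz):=(\tilde X,\tilde Y,\tilde Z)$, the integrability conditions in Assumption~H($p$), applied to $b^0,f^0,\mu^0,g(0),\xi$, together with the Lipschitz bounds of Assumption~\ref{coefficient:assume:general}, guarantee that $\Phi$ maps $\cB_p[t,T]$ into itself, and $\cB_p[t,T]$ is complete under $\|\cdot\|_{p,*}$.

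The heart of the proof is the contraction estimate. Taking two inputs $\Tz^1,\Tz^2$ and writing $\tilde\Tz^i=\Phi(\Tz^i)$, I would first estimate the forward part: since $\tilde X^1-\tilde X^2$ is the sum of a drift integral and a stochastic integral of coefficient differences, the Burkholder--Davis--Gundy inequality of Lemma~\ref{BDG:p} (upper constant $\overline{K}_p$) combined with the Lipschitz bounds and Hölder's inequality yields, after absorbing the self-referential $X$-term for small $T-t$,
$$\E\Bigl[\sup_{s}\bigl|\tilde X^1_s-\tilde X^2_s\bigr|^p\Bigr]\le \overline{K}_p\,L_{\mu,3}^p\,\E\Bigl[\Bigl(\textstyle\int_t^T|Z^1_u-Z^2_u|^2\,du\Bigr)^{p/2}\Bigr]+R_\delta,$$
where $R_\delta$ collects the drift contribution and the $L_{\mu,1},L_{\mu,2}$ terms, each carrying an explicit positive power of $T-t$. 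The key point is that the $z$-dependence of $\mu$ produces a term with no small factor in $T-t$. I would then feed this into the standard $\cL^p$-BSDE estimate for $(\tilde Y,\tilde Z)$: the terminal difference contributes $\E[|g(\tilde X^1_T)-g(\tilde X^2_T)|^p]\le L_g^p\,\E[\sup_s|\tilde X^1_s-\tilde X^2_s|^p]$, while recovering $\tilde Z$ costs the inverse lower constant $\underline{K}_p^{-1}$. Combining these, the coefficient multiplying $\|\Tz^1-\Tz^2\|_{p,*}^p$ in $\|\tilde\Tz^1-\tilde\Tz^2\|_{p,*}^p$ splits into a leading part proportional to $(L_g L_{\mu,3})^p$, with exactly the BDG-dependent prefactor appearing in \eqref{p-condition}, plus a remainder that vanishes as $T-t\to0$.

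The main obstacle is the careful bookkeeping of the BDG constants so that the leading contraction coefficient matches the sharp threshold in \eqref{p-condition}. This requires tracking $\overline{K}_p$ from the forward diffusion and $\underline{K}_p^{-1}$ from reconstructing $\tilde Z$, and keeping the factors $\tfrac{p}{p-1}$ and $\tfrac{2p-1}{p-1}$ that arise from Doob's and Hölder's inequalities in the $\cL^p$-BSDE estimate; the case $p=2$ is cleaner, since Itô's isometry replaces BDG and the condition reduces to $L_g L_{\mu,3}<1$. Once \eqref{p-condition} guarantees that the leading coefficient is strictly below $1$, I would choose $\delta>0$ small enough that the remainder terms of type $R_\delta$ push the total coefficient below $1$ as well; then $\Phi$ is a contraction on $\cB_p[t,T]$, and its unique fixed point is the desired unique $\cL^p$-solution. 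Continuity of $X,Y$ and predictability of $Z$ are inherited from the construction, which completes the argument.
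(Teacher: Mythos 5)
Your contraction strategy is the right family of ideas, but the specific map you build is not a contraction under \eqref{p-condition}, so there is a genuine gap. The problem is the full freezing of the triple combined with the unweighted norm $\|\cdot\|_{p,*}$. As your own forward estimate shows, the output satisfies $\E[\sup_s|\tilde X^1_s-\tilde X^2_s|^p]\approx \overline{K}_p L_{\mu,3}^p\,\E[(\int_t^T|Z^1_u-Z^2_u|^2\,du)^{p/2}]$ up to terms carrying positive powers of $T-t$. The coefficient $\overline{K}_pL_{\mu,3}^p$ contains no factor of $L_g$ and does not shrink as $T-t\to 0$; since $\|\tilde\Tz^1-\tilde\Tz^2\|_{p,*}^p$ contains precisely this $X$-component, the coefficient multiplying $\|\Tz^1-\Tz^2\|_{p,*}^p$ is at least of order $\overline{K}_pL_{\mu,3}^p$, not of order $(L_gL_{\mu,3})^p$ as you claim. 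Condition \eqref{p-condition} bounds only the \emph{product} $L_gL_{\mu,3}$: taking $L_g$ tiny and $L_{\mu,3}$ huge (e.g. $\mu(u,x,y,z)=L_{\mu,3}z$, with inputs agreeing in $(X,Y)$ and differing only in $Z$) satisfies \eqref{p-condition}, yet your $\Phi$ then expands the $\|\cdot\|_{p,*}$-distance by a factor of order $L_{\mu,3}$. Hence $\Phi$ is not a contraction on $(\cB_p[t,T],\|\cdot\|_{p,*})$ for any choice of $\delta$, and the final step of your argument fails; the same objection applies verbatim in the case $p=2$.

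The gap can be closed, and the comparison is instructive. The paper itself does not prove this lemma (it quotes Theorem 2.3 of \cite{Yong:20}), but the structure of that proof is visible in Appendix \ref{sec:Appendix:Lp}: one contracts in the forward component only, via the map $T(x):=X$, where for a frozen $x$ one solves the \emph{genuine} BSDE for $(Y,Z)$ with terminal value $g(x_T)$ and generator $f(u,x_u,y,z)$ (only the $x$-slot is frozen), and then integrates forward. In that scheme the only feedback loop without a small time factor is $x\to Z\to X$, whose coefficient is exactly the constant $(\overline{K}_p)^{1/p}\bigl(\tfrac{p}{p-1}+2(\underline{K}_p)^{-1/p}\tfrac{2p-1}{p-1}\bigr)L_gL_{\mu,3}$ controlled by \eqref{p-condition}. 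Alternatively, your full-freezing map can be salvaged by observing that the dangerous coefficient sits off the loop: the genuine loop $Z\to\tilde X\to\tilde Z$ has coefficient roughly $2(\underline{K}_p)^{-1/p}\tfrac{p}{p-1}(\overline{K}_p)^{1/p}L_gL_{\mu,3}$, which is $<1$ under \eqref{p-condition} because $\tfrac{2p-1}{p-1}\ge\tfrac{p}{p-1}$. So you would need either to contract $\Phi$ in a weighted norm giving the $X$- and $Y$-components small weights relative to the $Z$-component, or to show that the two-fold iterate $\Phi\circ\Phi$ is a contraction. Either repair must be stated and carried out explicitly; as written, the claimed one-step contraction in $\|\cdot\|_{p,*}$ is false.
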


The proof of Lemma~\ref{p-exist} naturally leads to the following estimates for the solution $\Tz=(X,Y,Z)$,
including both the a priori estimate and the stability (variance) estimate.
Since the derivation follows standard techniques, 
we provide a detailed proof in Appendix~\ref{sec:Appendix:Lp}.
\begin{lemm}\label{lemm:p-estimate}
    Let $\Tz := (X,Y,Z)$ be the unique solution to the FBSDEs satisfying all the conditions in Lemma~\ref{p-exist}.
    Then, the following a priori estimate holds:
    \begin{align}\label{p-estimate}
        \|\Tz\|^p_{p,*} \leq c_{\eqref{p-estimate}}
        &\E\Biggl[|\xi|^p + |g(0)|^p + \Biggl(\int_t^T |b^0(s)|\,\od s\Biggr)^p \notag\\
        &\quad + \Biggl(\int_t^T |\mu^0(s)|^2\,\od s\Biggr)^{p/2} + \Biggl(\int_t^T |f^0(s)|\,\od s\Biggr)^p\Biggr] < \infty,
    \end{align}
    where the constant $c_{\eqref{p-estimate}}$ depends only on $p$, $\dz$, and all the Lipschitz constants.

    Furthermore, consider another set of coefficients
    $(\wt{\xi},\wt{b},\wt{\mu},\wt{f},\wt{g})$ that satisfies all the conditions in Lemma~\ref{p-exist},
    and shares the same Lipschitz constants as $(\xi,b,\mu,f,g)$.
    Let $\wt{\Tz} = (\wt{X},\wt{Y},\wt{Z})$ be the corresponding solution to the modified FBSDEs.
    Then, the following stability estimate holds:
    \begin{align}\label{p-var:equ}
        \|\wt{\Tz} - \Tz\|^p_{p,*}
        \leq c_{\eqref{p-var:equ}} 
        &\E\Biggl[|\xi - \wt{\xi}|^p + |g(X_T) - \wt{g}(X_T)|^p + V_p([t,T])\Biggr],
    \end{align}
    where
    \begin{align*}
        V_p([t,T]) 
        &:= \Biggl(\int_t^T |\Delta b(s,\Tz_s)|\,\od s\Biggr)^p 
        + \Biggl(\int_t^T |\Delta \mu(s,\Tz_s)|^2\,\od s\Biggr)^{p/2}\\
        &\quad\quad + \Biggl(\int_t^T |\Delta f(s,\Tz_s)|\,\od s\Biggr)^p,
    \end{align*}
    with
    \begin{equation*}
        \Delta h := h - \wt{h}, \quad h\in\{b,\mu,f\}.
    \end{equation*}
    The constant $c_{\eqref{p-var:equ}}$ depends only on $p$, $\dz$, and all the Lipschitz constants.
\end{lemm}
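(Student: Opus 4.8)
The plan is to prove the a priori estimate \eqref{p-estimate} by a Burkholder--Davis--Gundy (BDG) bound followed by an absorption argument, and then to obtain the stability estimate \eqref{p-var:equ} essentially for free, by recognising that the difference of two solutions is itself the $\cL^p$-solution of an FBSDE to which \eqref{p-estimate} applies. Throughout I write $\alpha := \E[(X^*_T)^p]$, $\beta := \E[(Y^*_T)^p]$, $\gamma := \E[(\int_t^T |Z_s|^2\,\od s)^{p/2}]$, and I let $\Xi$ denote the right-hand side of \eqref{p-estimate}. All the smallness below is produced either by the length $T-t\le\dz$ of the interval (via H\"older) or by the structural condition \eqref{p-condition}.

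For \eqref{p-estimate} I would first treat the forward equation: taking the supremum over $s\in[t,T]$, raising to the power $p$, and applying Lemma~\ref{BDG:p} to the stochastic integral gives $\alpha \lesssim \E[|\xi|^p] + \E[(\int_t^T|b(u,\Tz_u)|\,\od u)^p] + \E[(\int_t^T|\mu(u,\Tz_u)|^2\,\od u)^{p/2}]$. Bounding the integrands by the Lipschitz inequalities of Assumption~\ref{coefficient:assume:general} and splitting each term, every contribution acquires a factor $(T-t)^{\kappa}$ with some $\kappa>0$ coming from H\"older over an interval of length $T-t\le\dz$, with the single exception of $\E[(\int_t^T L_{\mu,3}^2|Z_u|^2\,\od u)^{p/2}] = L_{\mu,3}^p\,\gamma$, which carries no small factor. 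Hence $\alpha \lesssim \Xi + L_{\mu,3}^p\,\gamma + \dz^{\kappa}(\alpha+\beta+\gamma)$. For the backward equation I would use $Y_s = \E[\,g(X_T)+\int_s^T f(u,\Tz_u)\,\od u \mid \cF_s\,]$, so Doob's maximal inequality together with the Lipschitz bounds yields $\beta \lesssim \E[|g(0)|^p] + L_g^p\,\alpha + \dz^{\kappa}(\alpha+\beta+\gamma)$. Finally, evaluating the backward equation between $s$ and $t$ gives $\int_t^s Z_u\,\od W_u = \int_t^s f(u,\Tz_u)\,\od u - Y_t + Y_s$, whence $\sup_{s}|\int_t^s Z\,\od W| \le \int_t^T|f|\,\od u + 2Y^*_T$; the lower BDG inequality of Lemma~\ref{BDG:p} then produces $\gamma \lesssim \Xi + \beta + \dz^{\kappa}(\alpha+\beta+\gamma)$.

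The decisive step is the absorption. Substituting the $\gamma$- and $\beta$-bounds into the forward bound reduces the three inequalities to a single one of the form $\alpha \le c\,\Xi + c\,L_g^p L_{\mu,3}^p\,\alpha + c\,\dz^{\kappa}(\alpha+\beta+\gamma)$, where $c$ aggregates the BDG constants $\ol{K}_p,\un{K}_p$ and all Lipschitz constants other than $L_g,L_{\mu,3}$. The coupling coefficient $c\,L_g^p L_{\mu,3}^p$ is exactly the quantity that condition \eqref{p-condition} forces to be strictly less than $1$ (this is precisely why that structural condition, rather than mere smallness of $\dz$, is indispensable); with that coefficient $<1$ I would then shrink $\dz$ so that the remaining $\dz^{\kappa}$-coefficient is also small enough, absorb $\alpha$ into the left-hand side to get $\alpha\lesssim\Xi$, and back-substitute to conclude $\beta,\gamma\lesssim\Xi$, which is \eqref{p-estimate}. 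I expect this bookkeeping --- matching the aggregated constant $c\,L_g^p L_{\mu,3}^p$ against the explicit constant appearing in \eqref{p-condition} --- to be the main obstacle, since it is where the BDG constants and the H\"older exponents must be tracked precisely.

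For the stability estimate I would observe that $\Delta\Tz := \wt{\Tz}-\Tz$ solves an FBSDE of the form \eqref{fbsde:equ:general} with coefficients $\hat b(u,\vartheta) := \wt b(u,\Tz_u+\vartheta) - b(u,\Tz_u)$ and analogously defined $\hat\mu,\hat f,\hat g$. These inherit the shared Lipschitz constants of $(b,\mu,f,g)$ and $(\wt b,\wt\mu,\wt f,\wt g)$, so \eqref{p-condition} continues to hold, while their zero-functions are $\hat b^0 = -\Delta b(\cdot,\Tz)$, $\hat\mu^0 = -\Delta\mu(\cdot,\Tz)$, $\hat f^0 = -\Delta f(\cdot,\Tz)$, the initial datum is $\xi-\wt\xi$, and $\hat g(0) = \wt g(X_T)-g(X_T)$. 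The a priori estimate just proved for $\Tz$, combined with Assumption~\ref{coefficient:assume:general} and \ref{fbsde:basic:assume:p:general}, guarantees that these data lie in the function spaces required by Lemma~\ref{p-exist}, so I would simply apply \eqref{p-estimate} to $\Delta\Tz$. Its right-hand side is then exactly $\E[|\xi-\wt\xi|^p + |g(X_T)-\wt g(X_T)|^p + V_p([t,T])]$, which is precisely \eqref{p-var:equ}.
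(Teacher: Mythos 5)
Your treatment of the stability estimate \eqref{p-var:equ} is correct and coincides with the paper's own argument: the paper likewise writes the difference of the two solutions as the solution of an FBSDE whose shifted coefficients $d_h(u,\vartheta):=\wt{h}(u,\wt{X}_u+\vartheta_1,\wt{Y}_u+\vartheta_2,\wt{Z}_u+\vartheta_3)-\wt{h}(u,\wt{X}_u,\wt{Y}_u,\wt{Z}_u)$ inherit the Lipschitz constants, whose ``zero functions'' are the forcing terms $\Dz b(\cdot,\Tz)$, $\Dz\mu(\cdot,\Tz)$, $\Dz f(\cdot,\Tz)$, and whose terminal data contribute $g(X_T)-\wt{g}(X_T)$, and then it applies \eqref{p-estimate} to that equation. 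No objection there.

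The gap is in your proof of \eqref{p-estimate} itself, at the absorption step. You assert that the aggregated coupling coefficient $c\,L_g^pL_{\mu,3}^p$, with $c$ collecting the BDG/Doob constants and the factors from splitting powers, ``is exactly the quantity that condition \eqref{p-condition} forces to be strictly less than $1$.'' It is not: \eqref{p-condition} bounds $C(p)\,L_gL_{\mu,3}<1$ for one specific explicit constant $C(p)=(\ol{K}_p)^{1/p}\bigl(\tfrac{p}{p-1}+2(\un{K}_p)^{-1/p}\tfrac{2p-1}{p-1}\bigr)$, i.e.\ $C(p)^p\,L_g^pL_{\mu,3}^p<1$, and your argument closes only if your aggregated $c$ is at most $C(p)^p$ --- which nothing in your chain of inequalities guarantees. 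The failure is starkest at $p=2$, where \eqref{p-condition} requires nothing beyond $L_gL_{\mu,3}<1$: if $L_gL_{\mu,3}$ is close to $1$, then any slack $c>1$ (Doob's factor, a $2^{p-1}$ from $(a+b)^p\le 2^{p-1}(a^p+b^p)$, non-unit BDG constants) makes $c\,L_g^2L_{\mu,3}^2>1$ and the absorption cannot be performed; shrinking $\dz$ does not help, since this coefficient carries no factor of $T-t$. Producing a chain of estimates sharp enough to be compatible with \eqref{p-condition} is exactly the delicate content of Yong's proof of Lemma~\ref{p-exist}, and you have deferred precisely that step rather than carried it out. The paper deliberately avoids redoing it: it invokes, from the proof of Lemma~\ref{p-exist}, that the map $T(x):=X$, where $(X,Y,Z)$ solves the partially decoupled system \eqref{fbsde:equ:operator} with input process $x$, is a contraction on $\cL^{\fz,p}_C$ with constant $c_{\eqref{fbsde:operator}}<1$ whenever $T-t\le\dz$; since $T(X)=X$, the triangle inequality gives $\|X\|_{\cL^{\fz,p}}\le(1-c_{\eqref{fbsde:operator}})^{-1}\|X^0\|_{\cL^{\fz,p}}$, and only the zero-input solution $X^0$ remains to be estimated (Briand--Delyon \eqref{bsde:est} for its BSDE part, then BDG for its forward part), which involves no absorption of coupled terms at all. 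So either restructure your argument along these lines, treating the contraction constant as given by Lemma~\ref{p-exist}, or supply the sharp constant tracking (Minkowski in $\cL^p$ rather than power splitting, It\^o isometry in place of generic BDG where possible) and verify that the resulting coefficient really is the one appearing in \eqref{p-condition}.
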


\subsection{Decoupling Field}\label{sec:decouplingField}
For the $\cL^2$-theory of FBSDEs, the decoupling field structure provides a method to extend the time interval to an arbitrary length (see \cite{Zhang:17} and \cite{Ma2015} for a complete view). 
In this section, we aim to investigate the $\cL^p$-theory for FBSDEs over time intervals of arbitrary length.
\begin{defi}\label{def:dfield}
    An $\F$-progressively measurable random field 
    $w:[0,T]\times \Oz\times \R^n \to \R^m$ with $w(T,x) = g(x)$ is called a 
    ``decoupling field" of FBSDE \eqref{fbsde:equ:general} if there is a constant $\dz > 0$ 
    such that, for all $ 0 \leq t_1 < t_2 \leq T$ with $t_2 - t_1 \leq \dz$,
    and all $\xi \in \cL^2(\Oz,\cF_{t_1},\P;\R^N)$, the FBSDE
    \begin{align}\label{fbsde:equ:de}
        \begin{cases}
            X_s = \xi + \int^s_{t_1} b(u,X_u,Y_u,Z_u)\,\od u + \int^s_{t_1} \mu(u,X_u,Y_u,Z_u)\,\od W_u;\\
            \ \\
            Y_s = w(t_2,X_{t_2}) + \int^{t_2}_s f(u,X_u,Y_u,Z_u)\,\od u - \int^{t_2}_s Z_u\,\od W_u,
          \end{cases} 
    \end{align}
    has a unique solution, denoted by $\Tz^{\xi,[t_1,t_2]} := (X^{\xi,[t_1,t_2]},Y^{\xi,[t_1,t_2]},Z^{\xi,[t_1,t_2]})$,
    that satisfies, for all $s\in [t_1, t_2]$,
    \begin{equation}\label{dfield:property}
        Y^{\xi,[t_1,t_2]}_s = w(s,X^{\xi,[t_1,t_2]}_s)\quad \P-a.s.
    \end{equation}
\end{defi}
\begin{defi}
    A decoupling field $w$ is called regular 
    if it is uniformly Lipschitz in $x$, 
    i.e. there is a positive constant $L_w > 0$,
    for all $x,y\in\R^n$ and $(s,\oz)\in [0,T]\times\Oz$,
    \begin{equation}\label{dfield:regular}
        |w(s,\oz,x) - w(s,\oz,y)|\leq L_w|x - y|.
    \end{equation}
\end{defi}

\smallskip

The following existence and uniqueness theorem holds.
\begin{lemm}\label{lemma:p-exits:decoupling}
    For arbitrary $0 \leq t < T < \fz$, suppose the FBSDE \eqref{fbsde:equ:general} satisfies all the following conditions:
    \begin{itemize}
        \item[(1)] satisfies Assumption \ref{coefficient:assume:general} and Assumption \ref{fbsde:basic:assume:p:general} with some $p\in [2,\fz)$;
        \item[(2)] admits a regular decoupling field $w$ with the Lipschitz constant $L_w$;
        \item[(3)] \eqref{p-condition} holds with $L_g$ replaced by $L_w$. 
    \end{itemize}
    Then, the equation \eqref{fbsde:equ:general} admits a unique $\cL^p$-solution $\Tz = (X,Y,Z)$
    such that the a priori estimate \eqref{p-estimate} holds on $[t,T]$.
\end{lemm}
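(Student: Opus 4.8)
The plan is to extend the local $\cL^p$-theory of Lemma~\ref{p-exist} to the full interval $[t,T]$ by subdividing it and patching local solutions together, using the regular decoupling field $w$ to supply the terminal data at each interior node. Since $w(T,\cdot)=g$ is $L_w$-Lipschitz, condition (3) guarantees that \eqref{p-condition} holds with terminal Lipschitz constant $L_w$, so Lemma~\ref{p-exist} yields a constant $\dz_1>0$ (depending on $p$ and all Lipschitz constants, including $L_w$) below which every local FBSDE with an $L_w$-Lipschitz terminal function is uniquely $\cL^p$-solvable. Let $\dz_2$ be the decoupling-field constant from Definition~\ref{def:dfield}, fix a partition $t=s_0<s_1<\cdots<s_N=T$ with mesh at most $\dz:=\min\{\dz_1,\dz_2\}$ (so $N$ is finite, of order $(T-t)/\dz$), and abbreviate $w(s_N,\cdot):=g$.

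First I would settle the integrability of the terminal data at the interior nodes by a backward induction. One has $w(s_N,0)=g(0)\in\cL^p$ by Assumption~\ref{fbsde:basic:assume:p:general}; and if $w(s_{i+1},0)\in\cL^p$, then the local FBSDE on $[s_i,s_{i+1}]$ with zero initial value at $s_i$ and terminal function $w(s_{i+1},\cdot)$ satisfies all hypotheses of Lemma~\ref{p-exist}, hence has a unique $\cL^p$-solution $(\bar X,\bar Y,\bar Z)$. Since this is in particular an $\cL^2$-solution, the decoupling-field uniqueness in Definition~\ref{def:dfield} forces $\bar Y_s=w(s,\bar X_s)$, and evaluating at $s_i$ (where $\bar X_{s_i}=0$) gives $w(s_i,0)=\bar Y_{s_i}\in\cL^p$. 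Chaining the a priori estimate \eqref{p-estimate} across these $N$ zero-initial problems also produces a bound for $\E|w(s_i,0)|^p$ in terms of the data $g(0),b^0,\mu^0,f^0$ alone.

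Next I would construct the solution by forward patching. Starting from $X_{s_0}=\xi\in\cL^p$, on each $[s_{i-1},s_i]$ I solve the local FBSDE with initial value $X_{s_{i-1}}\in\cL^p$ and terminal function $w(s_i,\cdot)$; Lemma~\ref{p-exist} applies (its hypotheses hold by the previous step and condition (3)), giving a unique local $\cL^p$-solution whose endpoint $X_{s_i}$ again lies in $\cL^p$, so the induction continues to $[s_{N-1},T]$. The decoupling property $Y_s=w(s,X_s)$, valid on each piece, makes the $Y$-values agree at every node $s_i$ (both equal $w(s_i,X_{s_i})$), so the concatenated triple $\Tz=(X,Y,Z)$ has continuous $X,Y$ and predictable $Z$; telescoping the local backward equations over the partition and using these matchings recovers the global backward equation $Y_s=g(X_T)+\int_s^T f\,\od u-\int_s^T Z\,\od W$, while the forward integrals simply add up. The global version of \eqref{p-estimate} on $[t,T]$ then follows by summing the per-piece estimates and chaining them forward: each $\|\Tz\|^p_{p,*,[s_{i-1},s_i]}$ is controlled by $\E|X_{s_{i-1}}|^p$, the already-bounded term $\E|w(s_i,0)|^p$, and the local data, and iterating in $i$ turns these into a bound by $\E|\xi|^p$ plus the global data, with a finite constant depending on $N$ (hence on $T-t$ and $\dz$), $p$, and the Lipschitz constants.

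Finally, for uniqueness, any $\cL^p$-solution on $[t,T]$ restricts on each $[s_{i-1},s_i]$ to a local solution; arguing backward from $[s_{N-1},T]$ (where the terminal is $g=w(T,\cdot)$) and invoking the decoupling-field uniqueness shows inductively that every $\cL^p$-solution satisfies $Y_s=w(s,X_s)$, and it therefore coincides node-by-node with the patched solution by the local uniqueness of Lemma~\ref{p-exist}. I expect the main obstacle to be the a priori estimate rather than the construction: one must verify that the interior terminal data $w(s_i,0)$ are genuinely $\cL^p$-integrable (the point of the backward induction above) and that chaining the local estimates, whose constants compound geometrically in the number of steps, still leaves a finite global constant, which is ensured only because the mesh bound $\dz$ forces $N$ to be finite.
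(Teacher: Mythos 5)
Your proposal is correct and takes essentially the same approach as the paper: the paper's own proof is only a sketch (apply Lemma~\ref{p-exist} on sub-intervals of small length, then use the decoupling property \eqref{dfield:property} to extend to $[t,T]$, following the $\cL^2$-argument of \cite[Theorem 7.3]{Ma2015}), and your partition-and-patch construction is exactly the filling-in of those omitted details. Your backward induction establishing the $\cL^p$-integrability of the interior terminal data $w(s_i,0)$, the forward gluing via $Y_{s_i}=w(s_i,X_{s_i})$, and the node-by-node uniqueness argument are all sound and consistent with what the paper intends.
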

\begin{proof}
    The proof follows a similar approach as in the $\cL^2$-case (see, for example, the proof of \cite[Theorem 7.3]{Ma2015}):
    First we use Lemma~\ref{p-exist} to obtain the estimate for \eqref{p-estimate} on sub-intervals with sufficiently small length. 
    Then \eqref{dfield:property} guarantees that the estimates can be extended to the full time interval $[t,T]$.
    We omit the details here.
\end{proof}
\begin{rema}
    In \cite{Ma2015}, Ma et al. provided a comprehensive analysis of the existence of regular decoupling fields
    and established connections between the coefficients and the Lipschitz constant of the decoupling field.
    Below, we summarize relevant results that can be used to verify the assumptions in Lemma~\ref{lemma:p-exits:decoupling}:
    \begin{itemize}
        \item[(1)] When $\mu = \mu(s,x,y,z)$, we refer to \cite[Theorem 7.4]{Ma2015};
        \item[(2)] When $\mu = \mu(s,x,y)$, we refer to \cite[Theorem 7.3]{Ma2015};
        \item[(2')] In the one-dimensional case, i.e., $n = m = d = 1$
        with $\mu = \mu(s,x,y)$, a detailed discussion is available in \cite[Theorem 8.3.5]{Zhang:17}.
    \end{itemize}
\end{rema}

\section{Coupling Variance Estimation}\label{sec:couplingVariance}

For a given measurable function $\vz:[0,T]\to [0,1]$,
and the corresponding coupling operator $\cC_{\vz}$ defined in Section~\ref{sec:specialSetting}.
We now present the main theorem of this article.
Our goal is to establish an estimate for the $p$-coupling variance $CV_p([t,T])$ with respect to $\cC_{\vz}$
for the FBSDE \eqref{fbsde:equ}:
\begin{align}\label{fbsde:equ:re}
    \begin{cases}
        X_s = \xi + \int^s_t b(u,X_u,Y_u,Z_u)\,\od u + \int^s_t \sz(u,X_u,Y_u) + A(u,Z_u)\,\od W_u;\\
        \ \\
        Y_s = g(X_T) + \int^T_s f(u,X_u,Y_u,Z_u)\,\od u - \int^T_s Z_u\,\od W_u,
      \end{cases} 
\end{align}
which is a particular case of the general FBSDE formulation in \eqref{fbsde:equ:general},
under the following assumption:
\begin{assume}\label{ass:linear:diffusion}
    The coefficient $\mu$ admits the following form:
    $$\mu(u,\oz,x,y,z) = \sz(u,\oz,x,y) + A(u,\oz,z),$$
    where 
    \begin{itemize}
        \item $\sz: [0,T]\times\Omega\times\R^n\times\R^m\to \R^{n\times d}$ is predictable;
        \item $A: [0,T]\times \Oz\times \R^{m\times d}\to \R^{n\times d}$ is predictable and linear in $\R^{m\times d}$.
    \end{itemize}
\end{assume}
\begin{rema}
    Notice that, if $\mu$ is uniformly Lipschitz in $(x,y,z)$, then the function $\sz$ is uniformly Lipschitz in $(x,y)$,
    and $A$ is uniformly Lipschitz in $z$.
\end{rema}

\subsection{On Small Time Intervals}\label{sec:small}
When the time interval $T - t$ is sufficiently small,
the estimation of the $p$-coupling variance $CV_p([t,T])$
can be directly obtained using Lemma~\ref{lemm:p-estimate} along with straightforward calculations.
In the remainder of this section,
we rigorously present the detailed steps of the proof,
outlining how the estimate follows from the fundamental properties of the FBSDE system and the coupling operator.

\smallskip

By Assumption \ref{ass:linear:diffusion},
the following technic lemma holds by direct calculation.
\begin{lemm}\label{lemma:vz:sz}
    Suppose that $\mu$ in \eqref{fbsde:equ:general}
    satisfies Assumption \ref{coefficient:assume:general}, Assumption \ref{fbsde:basic:assume:p:general},
    and is decomposed as $\mu = \sz + A$ according to Assumption \ref{ass:linear:diffusion}. 
    For all $\az\in [0,1]$ and 
    $$(s,\oz,x,y,(z_1,z_2))\in[0,T]\times \Oz\times \R^n\times\R^m \times(\R^{m\times d})^2,$$
    set
    \begin{equation}\label{defi:Sigma}
        \Sigma(s,\az,\oz,x,y,(z_1,z_2)) 
        := \left(\sz_{\sqrt{1 - \az^2}}(s,\oz,x,y,z_1), \sz_{\az}(s,\oz,x,y,z_2)\right),
    \end{equation}
    where
    $\sz_{r}(s,\oz,x,y,z) := r\,\sz(s,\oz,x,y) + A(s,\oz,z)$ for all $r\in [0,1]$.
    Then the following properties hold:
    \begin{itemize}
        \item[(1)] Lipschitz Continuity:
        For all $\az\in [0,1]$, $s\in [0,T]$, $\oz\in\Oz$ and 
        $$\tz = (x,y,(z_1,z_2)),\quad \tz' = (x',y',(z'_1,z'_2)),$$
        we have
        \begin{align*}
            |\Sigma(s,\az,\tz) - \Sigma(s,\az,\tz')| 
            &\leq L_{\mu,1} |x - x'| + L_{\mu,2}|y - y'|\\
            &\quad\quad + L_{\mu,3} |(z_1,z_2) - (z'_1,z'_2)|,
        \end{align*}
        where $\{L_{\mu,i}\}^3_{i = 1}$ are the Lipschitz constants for $\mu$ (Assumption \ref{coefficient:assume:general});
        \item[(2)] Integrability Condition:
        The function $\Sigma(\cdot,0,0,(0,0))$ belongs to the space $\cL^{2,p}([t,T]\times\Oz,\F,\P;\R^{n\times 2d})$;
        \item[(3)] Representation Property:
        For all $z\in \R^{m\times d}$,
        \begin{align*}
            &\Sigma(s,\az,x,y,(\sqrt{1 - \az^2}z,\az z))\\
            &\quad = \left(\sqrt{1 - \az^2} \mu(s,x,y,z), \az\mu(s,x,y,z)\right).
        \end{align*}
    \end{itemize}
\end{lemm}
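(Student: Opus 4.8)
The plan is to verify the three properties directly from the definitions in \eqref{defi:Sigma}, since each follows by straightforward computation once the structure of $\Sigma$ is unwound. Throughout, the key observation is that $\sz_r(s,\oz,x,y,z) = r\,\sz(s,\oz,x,y) + A(s,\oz,z)$ is built from the Lipschitz pieces $\sz$ (in $(x,y)$) and $A$ (linear, hence Lipschitz, in $z$), and that the two components of $\Sigma$ act on the two separate $z$-slots $z_1$ and $z_2$ with the multipliers $\sqrt{1-\az^2}$ and $\az$ respectively.

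For property (1), I would estimate the two components of $\Sigma(s,\az,\tz) - \Sigma(s,\az,\tz')$ separately. The first component is $\sz_{\sqrt{1-\az^2}}(s,\oz,x,y,z_1) - \sz_{\sqrt{1-\az^2}}(s,\oz,x',y',z'_1)$, which equals $\sqrt{1-\az^2}\bigl(\sz(s,\oz,x,y)-\sz(s,\oz,x',y')\bigr) + \bigl(A(s,\oz,z_1)-A(s,\oz,z'_1)\bigr)$. Using the Lipschitz bounds for $\mu$ from Assumption~\ref{coefficient:assume:general} applied to the pieces $\sz$ and $A$ (recall $\mu = \sz + A$, so $\sz$ contributes $L_{\mu,1}|x-x'|+L_{\mu,2}|y-y'|$ and $A$ contributes $L_{\mu,3}|z_1-z'_1|$), and the fact that $\sqrt{1-\az^2}\le 1$, I bound each component. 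The second component is handled identically with $z_2$ in place of $z_1$ and $\az\le 1$ in place of $\sqrt{1-\az^2}$. Combining the two via $|(v_1,v_2)|^2 = |v_1|^2+|v_2|^2$ and using $|z_1-z'_1|^2+|z_2-z'_2|^2 = |(z_1,z_2)-(z'_1,z'_2)|^2$ yields the stated bound, where the subadditivity $(1-\az^2)+\az^2 = 1$ ensures the $x$, $y$, and $z$ coefficients do not accumulate.

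For property (2), I note that $\Sigma(\cdot,0,0,(0,0))$ has second argument $\az=0$, so its first component is $\sz_1(s,\oz,0,0,0) = \sz(s,\oz,0,0)+A(s,\oz,0) = \sz(s,\oz,0,0)$ and its second component is $\sz_0(s,\oz,0,0,0)=A(s,\oz,0)=0$ (using linearity of $A$). Since $\mu^0(s,\oz) = \mu(s,\oz,0,0,0) = \sz(s,\oz,0,0)+A(s,\oz,0) = \sz(s,\oz,0,0)$, the function $\Sigma(\cdot,0,0,(0,0))$ is just $(\mu^0, 0)$, whose $\cL^{2,p}$-norm equals that of $\mu^0$, which is finite by Assumption~\ref{fbsde:basic:assume:p:general}(3). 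Property (3) is a direct substitution: plugging $z_1 = \sqrt{1-\az^2}\,z$ and $z_2 = \az z$ into \eqref{defi:Sigma} gives first component $\sqrt{1-\az^2}\,\sz(s,x,y) + A(s,\sqrt{1-\az^2}\,z) = \sqrt{1-\az^2}\bigl(\sz(s,x,y)+A(s,z)\bigr) = \sqrt{1-\az^2}\,\mu(s,x,y,z)$ by linearity of $A$, and similarly the second component equals $\az\,\mu(s,x,y,z)$.

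I do not anticipate a genuine obstacle here, as the lemma is purely computational; the only point requiring care is the repeated use of the linearity of $A$, which is precisely what allows the scalar multipliers $\sqrt{1-\az^2}$ and $\az$ to pass through $A$ and recombine $\sz_r$ into a clean multiple of $\mu$. Getting the bookkeeping of the two components right — so that the Lipschitz constants are not doubled — is the one place to be attentive, and it is resolved by the identity $(1-\az^2)+\az^2=1$.
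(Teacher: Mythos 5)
Your overall route---direct computation from the definition \eqref{defi:Sigma}---is exactly the paper's approach (the paper offers no written proof beyond asserting that the lemma ``holds by direct calculation''), and your treatments of parts (2) and (3) are correct. However, your description of part (1) contains an internal inconsistency that, taken literally, fails to deliver the stated constants. You propose to bound each component separately ``using the fact that $\sqrt{1-\az^2}\le 1$'' (resp.\ $\az\le 1$) and only afterwards combine the two components in the Euclidean norm. Once the prefactors $\sqrt{1-\az^2}$ and $\az$ have been discarded, the identity $(1-\az^2)+\az^2=1$ that you invoke at the end has nothing left to act on: summing the squares of the two componentwise bounds $L_{\mu,1}|x-x'|+L_{\mu,2}|y-y'|+L_{\mu,3}|z_i-z_i'|$, $i=1,2$, yields at best $\sqrt{2}\,\bigl(L_{\mu,1}|x-x'|+L_{\mu,2}|y-y'|\bigr)+L_{\mu,3}\,|(z_1,z_2)-(z_1',z_2')|$, i.e.\ the $(x,y)$-constants inflate by $\sqrt{2}$, contradicting the claim.

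The fix is to keep the prefactors until the very end. Writing $\Delta\sz:=\sz(s,\oz,x,y)-\sz(s,\oz,x',y')$ and $\Delta A_i:=A(s,\oz,z_i)-A(s,\oz,z_i')$, decompose $\Sigma(s,\az,\tz)-\Sigma(s,\az,\tz')=u+v$ with $u:=\bigl(\sqrt{1-\az^2}\,\Delta\sz,\;\az\,\Delta\sz\bigr)$ and $v:=(\Delta A_1,\Delta A_2)$. The triangle inequality gives $|u+v|\le |u|+|v|$, where $|u|=\sqrt{(1-\az^2)+\az^2}\;|\Delta\sz|=|\Delta\sz|\le L_{\mu,1}|x-x'|+L_{\mu,2}|y-y'|$ (this is where the Pythagorean identity does its work) and $|v|\le L_{\mu,3}\,|(z_1,z_2)-(z_1',z_2')|$ by the Lipschitz property of $A$. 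Two further small remarks: for the paper's later use the $\sqrt{2}$-loss would actually be harmless, since only $L_{\mu,3}$---which survives either way---enters condition \eqref{p-condition}, but the lemma as stated claims the sharp constants, so the order of estimates matters; and in part (2), if the argument ``$\cdot$'' is read as including $\az$, the same computation gives $|\Sigma(s,\az,\oz,0,0,(0,0))|=|\mu^0(s,\oz)|$ for \emph{every} $\az$, again by $(1-\az^2)+\az^2=1$, so your restriction to $\az=0$ costs nothing.
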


\smallskip

If the FBSDE \eqref{fbsde:equ:re}
admits an $\cL^2$ solution $\Tz = (X,Y,Z)$,
the coupling operator $\cC_{\vz}$ can deduce a new FBSDE (Theorem \ref{theo:transference}):
\begin{align}\label{fbsde:decouple:origin:0}
    \begin{cases}
    X^{\vz}_s = \xi^{\vz} + \int^s_t b^{\vz}(u,X^{\vz}_u,Y^{\vz}_u,Z^{\vz}_u)\,\od u + \int^s_t \mu^{\vz}(u,X^{\vz}_u,Y^{\vz}_u,Z^{\vz}_u)\,\od W^{\vz}_u;\\
    \ \\
    Y^{\vz}_s = g^{\vz}(X^{\vz}_T) + \int^T_s f^{\vz}(u,X^{\vz}_u,Y^{\vz}_u,Z^{\vz}_u)\,\od u - \int^T_s Z^{\vz}_u\,\od W^{\vz}_u.
    \end{cases}
\end{align}
By Lemma~\ref{lemma:vz:sz}(3) and the fact that
$$\od W^{\vz}_u = \sqrt{1 - (\vz(u))^2}\,\od W_u + \vz(u)\,\od W'_u,$$
we can rewrite the FBSDE \eqref{fbsde:decouple:origin:0} in the form:
\begin{align}\label{fbsde:decouple:origin}
    \begin{cases}
        \begin{aligned}
            X^{\vz}_s &= \xi^{\vz} 
            + \int^s_t b^{\vz}(u,X^{\vz}_u,Y^{\vz}_u,Z^{\vz}_u)\,\od u + \int^s_t \ol{\sz}(u)\,\od \ol{W}_u;
        \end{aligned} \\
        \ \\
        \begin{aligned}
            Y^{\vz}_s &= g^{\vz}(X^{\vz}_T) 
            + \int^T_s f^{\vz}(u,X^{\vz}_u,Y^{\vz}_u,Z^{\vz}_u)\,\od u \\
            &\quad\quad - \int^T_s \bigl(\sqrt{1 - (\vz(u))^2}Z^{\vz}_u, \vz(u)Z^{\vz}_u\bigr)\,\od \ol{W}_u,
        \end{aligned}
    \end{cases}
\end{align}
where 
$\ol{W} := \begin{pmatrix}
    W \\
    W'
  \end{pmatrix}$ 
is a $2d$-dimensional Brownian motion, and
\begin{align*}
    \ol{\sz}(u) 
    & = \Sigma^{\vz}(u,\vz(u),X^{\vz}_u,Y^{\vz}_u, (\sqrt{1 - (\vz(u))^2}Z^{\vz}_u, \vz(u)Z^{\vz}_u)).
\end{align*}

Then, we can redefine the other coefficients (i.e. $b^{\vz},f^{\vz}$ in \eqref{fbsde:decouple:origin})
in the $z$-component such that 
$$(X^{\vz},Y^{\vz},(\sqrt{1 - (\vz(\cdot))^2}Z^{\vz}, \vz(\cdot)Z^{\vz}))$$
become the solution for a new FBSDE with respect to the Brownian motion $\ol{W}$.

\begin{rema}
By  Lemma~\ref{p-exist}, the Lipschitz constants for the coefficients in the diffusion part, i.e. $\mu$ and $\mu^{\vz}$,
and the terminal function i.e. $g$ and $g^{\vz}$, are essential for the existence of the solution to FBSDE.
By Lemma \ref{lemma:vz:sz}(1) and Proposition \ref{prop:3.4},
$(\Sigma^{\vz},g^{\vz})$ admits the same Lipschitz constants as $(\mu,g)$.
Thus, the new FBSDEs derived from \eqref{fbsde:decouple:origin} will have the similar solvability as the original FBSDE \eqref{fbsde:equ:re}.
\end{rema}

Now we present the main theorem in this section,
which focus on the estimation for coupling variance on small-time interval.
\begin{theo}\label{thm:small:decoupling}
    Let Assumption~\ref{coefficient:assume:general}, Assumption~\ref{fbsde:basic:assume:p:general},
    Assumption~\ref{ass:linear:diffusion} and \eqref{p-condition} hold for some $p\in [2,\fz)$. 
    Then there is $\dz > 0$ such that \eqref{fbsde:equ:re} admits a unique solution
    $\Tz := (X,Y,Z)$ on $[t,T]$ if $T - t \leq \dz$.
    Moreover, if $T - t \leq \dz$,
    considering the coupling operator $\cC_{\vz}$ with respect to the measurable function $\vz: [0,T] \to [0,1]$,
    the following estimation holds
    \begin{align}\label{equ:small:decoupling}
        CV_p([t,T])
        &\leq c_{\eqref{equ:small:decoupling}} \left\{\E\left[|\xi - \xi^{\vz}|^p + \left|g(X_T) - g^{\vz}(X_T)\right|^p\right] \right.\notag\\
        &\quad \qquad \left. + \E\left[U_p([t,T])\right] + S_p([t,T])\right\},
    \end{align}
    where the constant $c_{\eqref{equ:small:decoupling}} \in (1,\fz)$ only relies on $p$, $T - t$
    and all the Lipschitz constants in Assumption \ref{coefficient:assume:general}.
    Here the $U_p([t,T])$ is the potential of the coefficients and $S_p([t,T])$ is the term related to the function $\vz$, i.e.
    \begin{align*}
        U_p([t,T]) 
        &:= \left(\int^T_t |\Dz b(u,X_u,Y_u,Z_u)|\,\od u\right)^p\\
        &\quad\quad + \left(\int^T_t |\Dz \sz(u,X_u,Y_u) + \Dz A(u,Z_u)|^2\,\od u\right)^{p/2}\\
        &\quad\quad + \left(\int^T_t |\Dz f(u,X_u,Y_u,Z_u)|\,\od u\right)^p,
    \end{align*}
    with
    \begin{equation*}
        \Dz h := h - h^{\vz}\quad h\in\{b,\sz,A,f\};
    \end{equation*}
    and
    \begin{align*}
        S_p([t,T])
        &:= \left(\int^T_t (\vz(u))^2\,\od u\right)^{p/2}\\
        &\qquad \times \E\left[(L_{\mu,1})^p\sup_{u\in [t,T]}|X_u|^p + (L_{\mu,2})^p\sup_{u\in [t,T]}|Y_u|^p\right]\\
        &\qquad + \E\left[\left(\int^T_t (\vz(u))^2(\sz^0(u))^2\,\od u\right)^{p/2}\right].
    \end{align*}
\end{theo}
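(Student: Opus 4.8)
The plan is to reduce the inequality to the stability estimate of Lemma~\ref{lemm:p-estimate}, applied to a pair of $2d$-dimensional FBSDEs driven by $\ol{W}=(W,W')^\top$, and then to translate the resulting bound back into the four terms of $CV_p([t,T])$. Existence and uniqueness of $\Tz=(X,Y,Z)$ for $T-t\le\dz$ is immediate from Lemma~\ref{p-exist}. For the comparison I would first view the original solution as a $2d$-solution: writing $\int Z_u\,\od W_u=\int(Z_u,0)\,\od\ol{W}_u$ and using Lemma~\ref{lemma:vz:sz}(3) with $\az=0$ (so that $\Sigma(s,0,x,y,(z_1,z_2))=(\mu(s,x,y,z_1),A(s,z_2))$), the triple $(X,Y,\ol{Z})$ with $\ol{Z}=(Z,0)$ solves a $2d$-FBSDE with diffusion $\Sigma(\cdot,0,\cdot)$, the $z$-argument of $b,f$ being taken through $z_1$. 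By \eqref{fbsde:decouple:origin} the coupled triple $(X^\vz,Y^\vz,\ol{Z}^\vz)$ with $\ol{Z}^\vz=(\sqrt{1-\vz^2}Z^\vz,\vz Z^\vz)$ solves the analogous $2d$-FBSDE with diffusion $\Sigma^\vz(\cdot,\vz,\cdot)$, where the $z$-argument of $b^\vz,f^\vz$ is recovered by the linear map $L_{\vz}(z_1,z_2):=\sqrt{1-\vz^2}z_1+\vz z_2$, which satisfies $L_\vz(\ol{Z}^\vz)=Z^\vz$ and is $1$-Lipschitz. By Lemma~\ref{lemma:vz:sz}(1)--(2) and Proposition~\ref{prop:3.4}, both $2d$-FBSDEs satisfy Assumption~\ref{coefficient:assume:general}, Assumption~\ref{fbsde:basic:assume:p:general} and \eqref{p-condition} with the same Lipschitz constants.

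Next I would apply \eqref{p-var:equ} to this pair, with the original $2d$-FBSDE as the unmodified one, to bound $\Xi:=\|(X^\vz,Y^\vz,\ol{Z}^\vz)-(X,Y,\ol{Z})\|_{p,*}^p$. The initial and terminal contributions are exactly $\E[|\xi-\xi^\vz|^p+|g(X_T)-g^\vz(X_T)|^p]$. In $V_p([t,T])$, with all coefficient differences evaluated at $(X_s,Y_s,(Z_s,0))$, the diffusion difference $\Sigma(\cdot,0,\cdot)-\Sigma^\vz(\cdot,\vz,\cdot)$ splits into $\Dz\sz(u,X_u,Y_u)+\Dz A(u,Z_u)$, which is the middle term of $U_p$, plus remainders of size $\vz|\sz^\vz|$ and $(1-\sqrt{1-\vz^2})|\sz^\vz|$; using $|\sz^\vz(u,X_u,Y_u)|\le|(\sz^0)^\vz(u)|+L_{\mu,1}|X_u|+L_{\mu,2}|Y_u|$ and the law-preservation of coupling (Proposition~\ref{prop:2.5}(1)) to replace $(\sz^0)^\vz$ by $\sz^0$ in expectation, these remainders yield exactly $S_p([t,T])$. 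The drift and generator differences yield the $\Dz b,\Dz f$ parts of $U_p$ together with extra terms which, by Cauchy--Schwarz in time and $1-\sqrt{1-\vz^2}\le 1$, are controlled by $(T-t)^{p/2}\bigl(\int_t^T(1-\sqrt{1-\vz^2})|Z_u|^2\,\od u\bigr)^{p/2}$, i.e. by a small multiple of the fourth term of $CV_p$.

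The decisive step is to control this fourth term. Since $1-\sqrt{1-\vz^2}\le\vz^2$, one has $\int_t^T(1-\sqrt{1-\vz^2})|Z_u|^2\,\od u\le\int_t^T\vz^2|Z_u|^2\,\od u$, and because coupling preserves the law of the process $Z$ this integral is equal in distribution to $\int_t^T\vz^2|Z_u^\vz|^2\,\od u$, which is the $W'$-component of $\int_t^T|\ol{Z}_u-\ol{Z}_u^\vz|^2\,\od u$ and hence has $p/2$-moment $\le\Xi$. Combined with the previous paragraph this gives $\Xi\le c(\E[|\xi-\xi^\vz|^p+|g(X_T)-g^\vz(X_T)|^p]+\E[U_p]+S_p)+c(T-t)^{p/2}\Xi$, so after shrinking $\dz$ so that $c(T-t)^{p/2}<1$ the last term is absorbed and $\Xi$ is bounded by the right-hand side of \eqref{equ:small:decoupling}. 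Finally I would bound each term of $CV_p$ by $\Xi$: the $X$- and $Y$-suprema are directly part of $\Xi$, the fourth term was just shown to be $\le\Xi$, and the $Z$-difference is treated via $Z-Z^\vz=(1-\sqrt{1-\vz^2})Z+L_\vz(\ol{Z}-\ol{Z}^\vz)$, bounding the third term by the fourth term plus $\Xi$. Hence $CV_p([t,T])\lesssim\Xi$ and \eqref{equ:small:decoupling} follows.

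The principal obstacle is that the fourth term of $CV_p$ — and therefore $\Xi$ itself — reappears on the right-hand side of the stability estimate through the drift and generator differences; this self-reference is resolved only by the absorption argument, which in turn forces the (possibly further) smallness restriction on $T-t$. A secondary but genuine technical point is justifying that the distributional identity in Proposition~\ref{prop:2.5}(1) propagates from finite families to the time-integral functional $\int_t^T\vz^2|Z_u|^2\,\od u$, which is what links the fourth term of $CV_p$ (written in terms of $Z$) to the $W'$-component of $\ol{Z}-\ol{Z}^\vz$ (written in terms of $Z^\vz$).
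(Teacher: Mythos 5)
Your proposal is correct, and its skeleton (lift both the original and the coupled solution to $2d$-dimensional FBSDEs driven by $\ol{W}=(W,W')^{\top}$, apply the stability estimate \eqref{p-var:equ} of Lemma~\ref{lemm:p-estimate}, then translate the resulting bound into the four terms of $CV_p([t,T])$) is exactly the paper's. The genuine difference lies in how the $z$-argument is embedded into the $2d$ coefficients, and it changes the structure of the argument. The paper uses the \emph{same} reconstruction map $(z_1,z_2)\mapsto z_1+c(u)z_2$, with $c(u)=\frac{1-\sqrt{1-\vz(u)^2}}{\vz(u)}\1_{\vz(u)\neq 0}$, for \emph{both} equations (see \eqref{coefficients:origin} and \eqref{coefficients:couple}); this map sends $(Z_u,0)\mapsto Z_u$ and $(\sqrt{1-\vz^2}Z^{\vz}_u,\vz Z^{\vz}_u)\mapsto Z^{\vz}_u$ simultaneously, so the drift and generator differences in $V_p([t,T])$ collapse \emph{exactly} to $\Dz b$ and $\Dz f$ evaluated along the original solution, with no remainder. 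You instead use two different maps (projection onto $z_1$ for the original equation, $L_{\vz}$ for the coupled one), which produces the remainder terms of order $(1-\sqrt{1-\vz^2})|Z_u|$ and hence feeds the fourth term of $CV_p$ back into the right-hand side; you then need the absorption argument and a possibly smaller $\dz$ with $c(T-t)^{p/2}<1$. This is legitimate (the theorem lets $\dz$ be chosen in the proof and the constant may depend on $T-t$), and your embedding even avoids the factor $\sqrt{2}$ in \eqref{lip:const:b}, but the paper's common-map trick is what makes the self-reference, and thus the entire third paragraph of your proposal, unnecessary.

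Regarding the technical point you flag: you can bypass the law-preservation argument for the functional $\int_t^T\vz^2|Z_u|^2\,\od u$ entirely. The paper's algebraic identity \eqref{equ:Z_process:vz} gives, pathwise,
\begin{equation*}
    \bigl(1-\sqrt{1-(\vz(u))^2}\bigr)|Z_u|^2 \;\le\; \bigl|(Z_u,0)-(\sqrt{1-(\vz(u))^2}\,Z^{\vz}_u,\vz(u)Z^{\vz}_u)\bigr|^2 = |\ol{Z}_u-\ol{Z}^{\vz}_u|^2,
\end{equation*}
so the fourth term of $CV_p$ (written directly in terms of $Z$, not $Z^{\vz}$) is dominated by $\Xi$ with no distributional identity needed; the same inequality also closes your absorption step, since $1-\sqrt{1-\vz^2}\le\vz^2$ is then not required. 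With this substitution your proof is complete and fully rigorous.
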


\begin{proof}
By Lemma \ref{p-exist}, there is $\dz_0 > 0$ such that \eqref{fbsde:equ} admits a unique $\cL^p$-solution $(X,Y,Z)$ when $T - t\leq \dz_0$.

Consider the $2d$-dimensional Brownian motion 
$\ol{W} := \begin{pmatrix}
    W \\
    W'
  \end{pmatrix}$
where $W$ and $W'$ be the Brownian motion
in the setting of Section~\ref{sec:specialSetting}.
The random coefficients $(b^{\vz},\sz^{\vz},A^{\vz},f^{\vz},g^{\vz})$
are defined as Section~\ref{sec:specialSetting} 
with original coefficients $(b,\sz,A,f,g)$ and coupling operator $\cC_{\vz}$.
By Proposition~\ref{prop:3.4} and Assumption~\ref{coefficient:assume:general},
we can choose $(b^{\vz},\sz^{\vz},A^{\vz},f^{\vz},g^{\vz})$ being uniformly Lipschitz
with same Lipschitz constants as $(b,\sz,A,f,g)$.
The corresponding Lipschitz constants denote by $L_{b,i}$, $L_{\mu,i}$, $L_{f,i}$ and $L_g$ according to Assumption~\ref{coefficient:assume:general}.

Rewrite the FBSDE \eqref{fbsde:decouple:origin} in the following form:
\begin{align}\label{fbsde:equ:var}
    \begin{cases}
        \ol{X}_s = \eta + \int^s_t \ol{b}(u,\ol{X}_u,\ol{Y}_u,\ol{Z}_u)\,\od u + \int^s_t \ol{\sz}(u,\ol{X}_u,\ol{Y}_u,\ol{Z}_u)\,\od \ol{W}_u;\\
        \ \\
        \ol{Y}_s = \ol{g}(\ol{X}_T) + \int^T_s \ol{f}(u,\ol{X}_u,\ol{Y}_u,\ol{Z}_u)\,\od u - \int^T_s \ol{Z}_u\,\od \ol{W}_u,
      \end{cases} 
\end{align}
with two series of coefficients $(\ol{b},\ol{\sz},\ol{f},\ol{g})$ and $(\ol{b^{\vz}},\ol{\sz^{\vz}},\ol{f^{\vz}},\ol{g^{\vz}})$ defined by
\begin{align}\label{coefficients:origin}
    \begin{cases}
        &\ol{b}(u,x,y,(z_1,z_2)) := b(u,x,y,z_1 + c(u)z_2),\\
        &\ol{\sz}(u,x,y,(z_1,z_2)) := \Sigma(u,0,x,y,(z_1,z_2)),\\
        &\ol{f}(u,x,y,(z_1,z_2)) := f(u,x,y,z_1 + c(u)z_2),\\
        &\ol{g}(x) := g(x),
    \end{cases}
\end{align}
and
\begin{align}\label{coefficients:couple}
    \begin{cases}
        &\ol{b^{\vz}}(u,x,y,(z_1,z_2)) := b^{\vz}(u,x,y,z_1 + c(u)z_2),\\
        &\ol{\sz^{\vz}}(u,x,y,(z_1,z_2)) := \Sigma^{\vz}(u,\vz(u),x,y,(z_1,z_2)),\\
        &\ol{f^{\vz}}(u,x,y,(z_1,z_2)) := f^{\vz}(u,x,y,z_1 + c(u)z_2),\\
        &\ol{g^{\vz}}(x) := g^{\vz}(x),
    \end{cases}
\end{align}
where
$$c(u) := \frac{1 - \sqrt{1 - (\vz(u))^2}}{\vz(u)}\1_{\vz(u)\neq 0},$$
$\Sigma$ is defined as in Lemma \ref{lemma:vz:sz}, and $\Sigma^{\vz} \in \cC_{\vz}(\Sigma)$ chosen according to Proposition \ref{prop:3.4}.
It is easy to see that 
\begin{itemize}
    \item 
$(X,Y, (Z,0))$ is a solution for \eqref{fbsde:equ:var}
with the coefficients $(\ol{b},\ol{\sz},\ol{f},\ol{g})$ and the initial value $\xi$;
    \item
$(X^{\vz},Y^{\vz}, (\sqrt{1 - (\vz(\cdot))^2}\,Z^{\vz},\vz(\cdot)Z^{\vz}))$
is a solution for \eqref{fbsde:equ:var}
with the coefficients $(\ol{b^{\vz}},\ol{\sz^{\vz}},\ol{f^{\vz}},\ol{g^{\vz}})$ and the initial value $\xi^{\vz}$.
\end{itemize}

We remark that $(b,\sz,A,f,g)$ and $(b^{\vz},\sz^{\vz},A^{\vz},f^{\vz},g^{\vz})$ share the same Lipschitz constants,
$\ol{g}$ and $\ol{g^{\vz}}$ admit the same Lipschitz constant $L_g$.
Moreover, notice that 
$$0\leq c(u) \leq \vz(u) \leq 1,$$ 
thus, for any $u\in [t,T]$ and 
$$(x,y,(z_1,z_2)),\ (x',y',(z'_1,z'_2)) \in \R^n\times\R^m\times (\R^{m\times d})^2,$$
we have
\begin{align}\label{lip:const:b}
    &|\ol{b^{\vz}}(u,x,y,z) - \ol{b^{\vz}}(u,x',y',z')|\notag\\
    &\quad = |b^{\vz}(u,x,y,z_1 + c(u)z_2) - b^{\vz}(u,x',y',z'_1 + c(u)z'_2)|\notag\\
    &\quad \leq L_{b,1} |x - x'| + L_{b,2} |y - y'| + L_{b,3} |z_1 - z'_1 + c(u)(z_2 - z'_2)|\notag\\
    &\quad \leq L_{b,1} |x - x'| + L_{b,2} |y - y'| + \sqrt{2}L_{b,3} |z - z'|.
\end{align}
Similarly, \eqref{lip:const:b} holds for $\ol{b}$, 
and for $\ol{f}$ and $\ol{f^{\vz}}$ with $L_{b,i}$ replaced by $L_{f,i}$, $i\in\{1,2,3\}$.

For the coefficients $\ol{\sz}$ and $\ol{\sz^{\vz}}$, by Lemma \ref{lemma:vz:sz},
\begin{align}\label{lip:const:sz}
    &|\ol{\sz^{\vz}}(u,x,y,(z_1,z_2)) - \ol{\sz^{\vz}}(u,x',y',(z'_1,z'_2))|\notag\\
    &\quad \leq L_{\mu,1}|x - x'| + L_{\mu,2}|y - y'| + L_{\mu,3}\left|(z_1,z_2) - (z'_1,z'_2)\right|,
\end{align}
and the same result holds for $\ol{\sz}$. 

\smallskip

Then, by Lemma \ref{p-exist}, there is a constant $\dz \in (0,\dz_0)$ such that, if $T - t \leq \dz$,
there is a unique $\cL^p$-solution for \eqref{fbsde:equ:re}, namely $(X,Y,Z)$.
By Lemma \ref{lemm:p-estimate},
we can further conclude that
\begin{align}\label{est:equ}
    &\E\left[|(X - X^{\vz})^*_T|^p + |(Y - Y^{\vz})^*_T|^p \right.\notag\\
    &\qquad \left.+ \left(\int^T_t |(Z_u,0) - (\sqrt{1 - (\vz(u))^2}Z^{\vz}_u,\vz(u)Z^{\vz}_u)|^2\,\od u\right)^{p/2}\right]\notag\\
    &\quad \leq c_{\eqref{p-var:equ}} \E\left[|\xi - \xi^{\vz}|^p + |g(X_T) - g^{\vz}(X_T)|^p + V_p([t,T])\right],
\end{align}
where
\begin{align*}
    V_p([t,T]) 
    &:= \left(\int^T_t |\ol{b}(u,X_u,Y_u,(Z_u,0)) - \ol{b^{\vz}}(u,X_u,Y_u,(Z_u,0))|\,\od u\right)^p \\
    &\qquad + \left(\int^T_t |\ol{\sz}(u,X_u,Y_u,(Z_u,0)) - \ol{\sz^{\vz}}(u,X_u,Y_u,(Z_u,0))|^2\,\od u\right)^{p/2}\\
    &\qquad + \left(\int^T_t |\ol{f}(u,X_u,Y_u,(Z_u,0)) - \ol{f^{\vz}}(u,X_u,Y_u,(Z_u,0))|\,\od u\right)^p\\
    &= \left(\int^T_t |b(u,X_u,Y_u,Z_u) - b^{\vz}(u,X_u,Y_u,Z_u)|\,\od u\right)^p \\
    &\qquad + \left(\int^T_t |f(u,X_u,Y_u,Z_u) - f^{\vz}(u,X_u,Y_u,Z_u)|\,\od u\right)^p\\
    &\qquad + \left(\int^T_t |\Sigma(u,0,X_u,Y_u,(Z_u,0)) - \Sigma^{\vz}(u,\vz(u),X_u,Y_u,(Z_u,0))|^2\,\od u\right)^{p/2}\\
    & =: I_b + I_f + I_{\Sigma}.
\end{align*}
For the RHS of \eqref{est:equ}, it is suffices to deal with the term $I_{\Sigma}$. Notice that
\begin{align*}
    I_{\Sigma}
    &\leq \left(\int^T_t \left|I_{\Sigma,1}(u) + I_{\Sigma,2}(u)\right|^2\,\od u\right)^{p/2}
\end{align*}
where
\begin{align*}
    I_{\Sigma,1}(u) 
    &:=  \left|\left(\sz(u,X_u,Y_u) + A(u,Z_u)\right) - \left(\sqrt{1 - (\vz(u))^2} \sz^{\vz}(u,X_u,Y_u) + A^{\vz}(u,Z_u)\right)\right|\\
    &\leq \left|\left(\sz(u,X_u,Y_u) + A(u,Z_u)\right) - \left(\sz^{\vz}(u,X_u,Y_u) + A^{\vz}(u,Z_u)\right)\right|\\
    &\qquad + (1 - \sqrt{1 - (\vz(u))^2})\left|\sz^{\vz}(u,X_u,Y_u)\right|,
\end{align*}
and
\begin{align*}
    I_{\Sigma,2}(u) 
    &:= \left|\vz(u) \sz^{\vz}(u,X_u,Y_u)\right| = \vz(u) \left|\sz^{\vz}(u,X_u,Y_u)\right|.
\end{align*}
By the fact that $1 - \sqrt{1 - (\vz(u))^2} \leq \vz(u)$
and Proposition \ref{prop:2.5}, we have
\begin{align*}
    \left(I_{\Sigma}\right)^{2/p}
    &\leq 2\int^T_t |\Dz \sz(u,X_u,Y_u) + \Dz A(u,Z_u)|^2\,\od u\\
    &\qquad + 8\int^T_t (\vz(u))^2|\sz^{\vz}(u,X_u,Y_u)|^2\,\od u\\
    &\leq 2\int^T_t |\Dz \sz(u,X_u,Y_u) + \Dz A(u,Z_u)|^2\,\od u\\
    &\qquad + 24\int^T_t (\vz(u))^2(\sz^{\vz}(u,0,0))^2\,\od u\\
    &\qquad + 24\int^T_t (\vz(u))^2\,\od u \left[\sup_{u\in [t,T]}\left((L_{\mu,1})^2|X_u|^2 + (L_{\mu,2})^2|Y_u|^2\right)\right].
\end{align*}
For the LHS of \eqref{est:equ}, the following estimations hold:
\begin{align}\label{equ:Z_process}
    & |(Z_u,0) - (\sqrt{1 - (\vz(u))^2}Z^{\vz}_u,\vz(u)Z^{\vz}_u)|^2\notag\\
    & \quad = |Z^{\vz}_u|^2 - 2\sqrt{1 - (\vz(u))^2}Z^{\vz}_u\cdot Z_u + |Z_u|^2\notag\\
    & \quad = (1 - \sqrt{1 - (\vz(u))^2})[|Z^{\vz}_u|^2 + |Z_u|^2] + \sqrt{1 - (\vz(u))^2}|Z^{\vz}_u - Z_u|^2\notag\\
    & \quad \geq \frac{1 + \sqrt{1 - (\vz(u))^2}}{2}|Z^{\vz}_u - Z_u|^2 \geq \frac{|Z^{\vz}_u - Z_u|^2}{2},
\end{align}
and
\begin{align}\label{equ:Z_process:vz}
    &|(Z_u,0) - (\sqrt{1 - (\vz(u))^2}Z^{\vz}_u,\vz(s)Z^{\vz}_u)|^2\notag\\
    & \quad \geq (1 - \sqrt{1 - (\vz(u))^2})[|Z^{\vz}_u|^2 + |Z_u|^2]\notag\\
    & \quad \geq (1 - \sqrt{1 - (\vz(u))^2})|Z_u|^2.
\end{align}
Thus, by \eqref{est:equ}, \eqref{equ:Z_process}, \eqref{equ:Z_process:vz} and Proposition \ref{prop:2.5},
we conclude that
\begin{align*}  
    CV_p([t,T])
    &\leq c_{\eqref{p-var:equ}}c'_p\left\{\E\left[|\xi - \xi^{\vz}|^p + |g(X_T) - g^{\vz}(X_T)|^p\right]\right.\\
    &\quad\qquad + \left.\E\left[U_p([t,T])\right] + S_{p,\vz}([t,T])\right\},
\end{align*}
where $c'_p > 1$ is a constant only relies on $p$,
$U_p([t,T])$ is as defined previously, and
\begin{align*}
    S_{p,\vz}([t,T])
    &:= \left(\int^T_t (\vz(u))^2\,\od u\right)^{p/2}\\
    &\qquad \times \E\left[(L_{\mu,1})^p\sup_{u\in [t,T]}|X_u|^p + (L_{\mu,2})^p\sup_{u\in [t,T]}|Y_u|^p\right]\\
    &\qquad + \E\left[\left(\int^T_t (\vz(u))^2(\sz^{\vz}(u,0,0))^2\,\od u\right)^{p/2}\right].
\end{align*}
By Proposition~\ref{prop:3.4}, we can replace $\sz^{\vz}$ by $\sz$ for the above equality,
and this finishes the proof for the estimation.
\end{proof}

\subsection{Transference of Decoupling Field}\label{sec:decoupling_vs_coupling}

Now we investigate the coupling variance for
arbitrary length time intervals.
And we use the decoupling field as a background for this topic.

Before we present the main theorem in Section \ref{sec:arbitrary},
in this section, we analyze the transference of the decoupling field structure
under the coupling operator $\cC_{\vz}$ as defined in Section~\ref{sec:specialSetting}.

For clarity, we introduce the following notations:
\begin{itemize}
    \item $\cC_{\varphi,0}$ denotes the coupling operator applied to random variables;
    \item $\cC_{\varphi,T}$ denotes the coupling operator applied to stochastic processes.
\end{itemize}
We will use these distinctions to systematically study how the coupling method interacts with the structural properties of decoupling fields.
Notice that the coupling operator only maps the predictable process to predictable process (Proposition \ref{prop:2.12}),
the decoupling field $u$ may only be progressive measurable, then the measurability of $\cC_{\vz,T}(u)$ is unknown.
Thus, we need to find a suitable substitute. 
The main lemma is as follows:

\begin{lemm}\label{lemma:technic:de}
    Suppose the FBSDE \eqref{fbsde:equ:re} satisfies all the following conditions: 
    \begin{itemize}
        \item[(a)] Assumption~\ref{coefficient:assume:general}, Assumption~\ref{fbsde:basic:assume:p:general} and Assumption~\ref{ass:linear:diffusion} hold with $p = 2$.
        \item[(b)] The equation admits a regular decoupling field $w$ with $L_w L_{\mu,3} < 1$.
    \end{itemize}
    Then, there exists a constant $\ol{\dz}>0$ such that,
    for any partition $t = t_0 \leq t_1\leq\cdots\leq t_n = T$ satisfying $t_{i + 1} - t_i \leq \ol{\dz}$, 
    there exists a sequence of measurable functions $\{w^{\vz,i}\}^n_{i = 0}$ satisfying the following properties:
    For each $i\in\{0,1,\dots,n\}$, the following hold
    \begin{itemize}
        \item[(1)] Coupling Consistency:
        $w^{\vz,i}(\cdot,x)\in \cC_{\vz,0}(w(t_i,\cdot,x))$ for any $x\in\R^n$;
        \item[(2)] Lipschitz Continuity:
        For all $x,y\in\R^n$ and $\oz\in\Oz$,
        \begin{equation}\label{equ:Lipschitz:ui}
            |w^{\vz,i}(\oz,x) - w^{\vz,i}(\oz,x)| \leq L_w |x - y|;
        \end{equation}
        \item[(3)] Existence and Uniqueness of Local FBSDE Solution:
        For any $\ol{\xi}\in \cL^2(\cF^{\ol{W}}_{t_i})$,
        where $\ol{W} := 
        \begin{pmatrix}
        W \\
        W'
        \end{pmatrix}$,
        the following FBSDE
    \begin{align}\label{fbsde:equ:de:field}
    \begin{cases}
        \ol{X}_s = \ol{\xi} + \int^s_{t_i} \ol{b^{\vz}}(u,\ol{X}_u,\ol{Y}_u,\ol{Z}_u)\,\od u + \int^s_{t_i} \ol{\sz^{\vz}}(u,\ol{X}_u,\ol{Y}_u,\ol{Z}_u)\,\od \ol{W}_u;\\
        \ \\
        \ol{Y}_s = w^{\vz,i}(\ol{X}_{t_{i + 1}}) + \int^{t_{i + 1}}_s \ol{f^{\vz}}(u,\ol{X}_u,\ol{Y}_u,\ol{Z}_u)\,\od u - \int^{t_{i + 1}}_s \ol{Z}_u\,\od \ol{W}_u
    \end{cases}
    \end{align}
    admits a unique solution $(\ol{X}^{\ol{\xi},[t_i,t_{i + 1}]},\ol{Y}^{\ol{\xi},[t_i,t_{i + 1}]},\ol{Z}^{\ol{\xi},[t_i,t_{i + 1}]})$
    where the coefficients $(\ol{b^{\vz}},\ol{\sz^{\vz}},\ol{f^{\vz}})$ is defined as in \eqref{coefficients:couple};
    \item[(4)] Decoupling Field Property:
    The solution in $(3)$ satisfies,
    for any $\ol{\xi}\in \cL^2(\cF^{\ol{W}}_{t_i})$,
    \begin{equation}
        \ol{Y}^{\ol{\xi},[t_i,t_{i + 1}]}_{t_i} = w^{\vz,i}(\ol{\xi})\quad \P-a.s.;
    \end{equation}
    \item[(5)] Structure of $\ol{Z}$:
    The solution in $(3)$ satisfies,
    there is a $\R^{m\times d}$ predictable process $\wt{Z}\in \cL^{2,2}$ such that 
    $$\ol{Z} = (\sqrt{1 - (\vz(\cdot))^2}\,\wt{Z}, \vz(\cdot)\wt{Z}).$$
    \end{itemize}
\end{lemm}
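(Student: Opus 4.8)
The plan is to transfer the decoupling field only at the finitely many grid times $t_i$---where $\oz\mapsto w(t_i,\oz,\cdot)$ is a genuine $C(\R^n)$-valued random variable rather than a merely progressive random field, so that the obstruction flagged before the statement (Proposition~\ref{prop:2.12} only couples \emph{predictable} processes) does not bite---and then to show that these frozen-time couplings act as a decoupling field for the $2d$-dimensional system \eqref{fbsde:equ:de:field}. I would fix $\ol{\dz}$ to be the threshold $\dz$ furnished by Lemma~\ref{p-exist} (with $p=2$ and $L_g$ replaced by $L_w$), applied simultaneously to the $2d$-dimensional coefficients $(\ol{b^{\vz}},\ol{\sz^{\vz}},\ol{f^{\vz}})$---which by \eqref{lip:const:b}, \eqref{lip:const:sz} and Lemma~\ref{lemma:vz:sz}(1) retain the original Lipschitz constants, so that $L_w L_{\mu,3}<1$ still yields local well-posedness---and to the reduced $d$-dimensional system described below; since all constants are preserved, $\ol{\dz}$ is uniform in $i$. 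Properties (1) and (2) are then immediate: applying Proposition~\ref{prop:3.4} at the fixed time $t_i$ to the $C(\R^n)$-valued variable $w(t_i,\cdot,\cdot)$ produces a jointly measurable $w^{\vz,i}$ with $w^{\vz,i}(\cdot,x)\in\cC_{\vz,0}(w(t_i,\cdot,x))$ for all $x$, and the continuous control $H(x,y):=L_w|x-y|$ in Proposition~\ref{prop:3.4}(2) transfers the regularity of $w$ into \eqref{equ:Lipschitz:ui}.

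For (3) and (5) the decisive point is algebraic. Positing the split form $\ol{Z}=(\sqrt{1-\vz^2}\,\wt{Z},\vz\wt{Z})$ gives $z_1+c(u)z_2=\wt{Z}$ (using $c(u)\vz(u)=1-\sqrt{1-\vz(u)^2}$), while the linearity of $A^{\vz}$ collapses $\ol{\sz^{\vz}}(u,\cdot,\ol{Z})$ into $(\sqrt{1-\vz^2}\,\mu^{\vz},\vz\mu^{\vz})$; consequently $\int\ol{\sz^{\vz}}\,\od\ol{W}=\int\mu^{\vz}\,\od W^{\vz}$ and $\int\ol{Z}\,\od\ol{W}=\int\wt{Z}\,\od W^{\vz}$, so that under this ansatz \eqref{fbsde:equ:de:field} is exactly the $d$-dimensional FBSDE driven by $W^{\vz}$ with coefficients $(b^{\vz},\mu^{\vz},f^{\vz})$ and the terminal datum prescribed at $t_{i+1}$ (namely $w^{\vz,i+1}$). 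Because the increments of $W^{\vz}$ after $t_i$ are independent of $\cF^{\ol{W}}_{t_i}$, this reduced equation is well-posed on $[t_i,t_{i+1}]$ for every $\ol{\xi}\in\cL^2(\cF^{\ol{W}}_{t_i})$ by Lemma~\ref{p-exist}, yielding $(\ol{X},\ol{Y},\wt{Z})$; its lift $\ol{Z}:=(\sqrt{1-\vz^2}\,\wt{Z},\vz\wt{Z})$ then solves \eqref{fbsde:equ:de:field}. Since the $2d$-dimensional system is itself uniquely solvable on $[t_i,t_{i+1}]$ by Lemma~\ref{p-exist}, this lift must be \emph{the} solution, which simultaneously gives existence and uniqueness in (3) and the structure (5).

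For the decoupling identity (4) I would proceed in three steps. For constant data $x\in\R^n$, the coupling operator fixes constants (by Proposition~\ref{prop:2.5}(1)), so applying the transference theorem (Theorem~\ref{theo:transference}) to the original local equation on $[t_i,t_{i+1}]$---whose solution satisfies $Y_{t_i}=w(t_i,\cdot,x)$ by \eqref{dfield:property}---together with Proposition~\ref{prop:2.5}(2) gives $\ol{Y}^{x}_{t_i}\in\cC_{\vz,0}(w(t_i,\cdot,x))$, i.e.\ $\ol{Y}^{x}_{t_i}=w^{\vz,i}(\cdot,x)$. For simple data $\ol{\xi}=\sum_k x_k\1_{A_k}$ with $A_k\in\cF^{\ol{W}}_{t_i}$, the independence of the post-$t_i$ increments from $\cF^{\ol{W}}_{t_i}$ lets me restrict the equation to each $A_k$; uniqueness then forces $\ol{Y}^{\ol{\xi}}_{t_i}=\sum_k\1_{A_k}\ol{Y}^{x_k}_{t_i}=w^{\vz,i}(\cdot,\ol{\xi})$. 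Finally I would pass to arbitrary $\ol{\xi}\in\cL^2(\cF^{\ol{W}}_{t_i})$ by approximation, controlling the left-hand side with the stability estimate \eqref{p-var:equ} and the right-hand side with the Lipschitz bound \eqref{equ:Lipschitz:ui}.

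The main obstacle is the mismatch between the two filtrations: transference directly yields the decoupling identity only for initial data that are genuine couplings, hence $\cF^{\vz}_{t_i}$-measurable, whereas (3)--(4) are claimed for all $\ol{\xi}\in\cL^2(\cF^{\ol{W}}_{t_i})$ and $\cF^{\vz}_{t_i}\subsetneq\cF^{\ol{W}}_{t_i}$. Bridging this gap is exactly the purpose of the split-structure argument and the pasting scheme: one must verify that the Brownian direction orthogonal to $W^{\vz}$ never enters the solution of the full $2d$-dimensional system, and that the grid-time couplings $w^{\vz,i}$ chain together correctly under the flow even though $w$ cannot be transferred as a process. I expect the delicate bookkeeping to lie in making the localization in the second step of (4) rigorous while keeping the split structure (5) intact across the concatenation of the sub-intervals.
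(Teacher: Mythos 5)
Your overall architecture coincides with the paper's: couple $w$ only at the grid times via Propositions~\ref{prop:2.5}(4)/\ref{prop:3.4} (the paper cites Proposition~\ref{prop:2.5}(4) for existence and Proposition~\ref{prop:3.4} for the Lipschitz bound; both are stated for processes and applied at a frozen time, as you do); solve the $2d$-dimensional system \eqref{fbsde:equ:de:field} by Lemma~\ref{p-exist}, using that $(\ol{b^{\vz}},\ol{\sz^{\vz}},\ol{f^{\vz}},w^{\vz,i})$ keep the original Lipschitz constants so that $L_wL_{\mu,3}<1$ still yields \eqref{p-condition} for $p=2$; and prove (4) by the chain constants $\to$ simple functions $\to$ general $\ol{\xi}$, exactly as in the paper. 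One small correction: $\ol{\dz}$ must also be capped by the constant $\dz$ of Definition~\ref{def:dfield}, since your constant-data step uses \eqref{dfield:property} on $[t_i,t_{i+1}]$, which is only guaranteed on subintervals of length at most that $\dz$; the paper takes $\ol\dz$ as the minimum of the two thresholds.

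The genuine gap is in your proof of (5) (and in the existence half of (3), though there it is harmless since you also invoke the $2d$-dimensional solvability). You assert that the reduced $d$-dimensional FBSDE driven by $W^{\vz}$ is ``well-posed on $[t_i,t_{i+1}]$ for every $\ol{\xi}\in\cL^2(\cF^{\ol W}_{t_i})$ by Lemma~\ref{p-exist}'' because the post-$t_i$ increments of $W^{\vz}$ are independent of $\cF^{\ol W}_{t_i}$. Lemma~\ref{p-exist} does not cover this: it is stated on a Brownian stochastic basis, with the initial datum measurable w.r.t.\ the augmented natural filtration of the \emph{driving} Brownian motion, and its proof (contraction plus martingale representation) runs in that filtration. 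A general $\ol{\xi}\in\cL^2(\cF^{\ol W}_{t_i})$ is not $\cF^{\vz}_{t_i}$-measurable, so the reduced equation would have to be solved in the independently enlarged filtration $\cF^{\ol W}_{t_i}\vee\sigma(W^{\vz}_u-W^{\vz}_{t_i}:u\le s)$, which requires re-establishing martingale representation and the fixed-point scheme there --- true but nowhere argued, and not in the paper's toolkit. This is exactly the filtration mismatch you yourself flag as ``the main obstacle,'' yet your bridge (solve the reduced system, lift, invoke uniqueness) stands on the unjustified step. The paper's proof circumvents it: the reduced solution is produced only for deterministic initial points $x_0$, where it is the transference $\cC_{\vz,T}$ of the original local solution (no enlarged filtration appears), which gives (5) for deterministic and hence, by the localization argument you already use in (4), for simple $\ol{\xi}$; the general case follows by approximating $\ol{\xi}$ in $\cL^2$ and using the stability estimate \eqref{p-var:equ} to pass the split structure of $\ol{Z}$ to the limit in $\cL^{2,2}$. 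Rerouting your (5) through this simple-function-plus-approximation scheme closes the gap with tools you already deploy.
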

\begin{proof}
\textbf{Construction of $w^{\vz,i}$ and proof of (2)}: 
This follows from the Proposition~\ref{prop:2.5}(4) (the existence of function $u^{\vz,i}$),
and Proposition~\ref{prop:3.4} (the Lipschitz property).

\smallskip

\textbf{Proof of Lemma \ref{lemma:technic:de}(3), (4) and (5)}: 
    
By \eqref{equ:Lipschitz:ui} and \eqref{lip:const:sz},
$(w^{\vz,i},\ol{\sz^{\vz}})$ share the same Lipschitz constants as $(w,\sz)$.
By \eqref{lip:const:b}, for any $h\in\{b,f\}$,
    \begin{align}\label{lip:const:b:0}
        &|\ol{h^{\vz}}(s,x,y,z) - \ol{h^{\vz}}(s,x',y',z')|\notag\\
        &\quad \leq L_{h,1} |x - x'| + L_{h,2} |y - y'| + \sqrt{2}L_{h,3} |z - z'|.
    \end{align}

    By \eqref{equ:Lipschitz:ui} and the fact that $L_wL_{\mu,3} < 1$,
    Lemma \ref{p-exist} shows that,
    there is $\wt{\dz} > 0$, 
    such that \eqref{fbsde:equ:de:field} admits a unique solution if $t_{i + 1} - t_i\leq \wt{\dz}$.
    
    Let $\dz$ be the constant defined as in Definition \ref{def:dfield}.
    Set
    $$\ol{\dz} := \min\{\dz,\wt{\dz}\}.$$
    Then, if $t_{i + 1} - t_i \leq \ol{\dz}$, for any $x_0\in\R^n$,
    the FBSDE
    \begin{align}\label{fbsde:equ:decouple:field:small}
        \begin{cases}
            X_s = x_0 + \int^{s}_{t_i} b(u,X_u,Y_u,Z_u)\,\od u + \int^{s}_{t_i} \sz(u,X_u,Y_u) + A(u,Z_u)\,\od W_u;\\
            \ \\
            Y_s = w(t_{i + 1},X_{t_{i + 1}}) + \int^{t_{i + 1}}_s f(u,X_u,Y_u,Z_u)\,\od u - \int^{t_{i + 1}}_s Z_u\,\od W_u,
          \end{cases} 
    \end{align}
    admits a unique solution 
    $$(X^{x_0,[t_i,t_{i + 1}]},Y^{x_0,[t_i,t_{i + 1}]},Z^{x_0,[t_i,t_{i + 1}]}).$$
    And, similar as in the proof of Theorem \ref{thm:small:decoupling},
    we conclude that
    $$\left(X^{x_0,\vz,[t_i,t_{i + 1}]},Y^{x_0,\vz,[t_i,t_{i + 1}]},(\sqrt{1 - (\vz(\cdot))^2}\,Z^{x_0,\vz,[t_i,t_{i + 1}]},\vz(\cdot)\,Z^{x_0,\vz,[t_i,t_{i + 1}]})\right)$$
    is the solution of \eqref{fbsde:equ:de:field} with the initial value $\ol{\xi} = x_0$, where
    $$H^{x_0,\vz,[t_i,t_{i + 1}]} \in \cC_{\vz,T}\left(H^{x_0,[t_i,t_{i + 1}]}\right),\quad H\in\{X,Y,Z\}.$$
    Moreover, the definition of $\ol{\dz}$ shows that the solution above is also unique.

    By Lemma \ref{lemma:technic:de}$(1)$ and 
    the property of decoupling field, i.e. \eqref{dfield:property}, we have 
    \begin{align*}
        &w^{\vz,i}(x_0) \in \cC_{\vz,0}(w(t_i,x_0)),\\
        &Y^{x_0,[t_i,t_{i + 1}]}_{t_i} = w(t_i,x_0)\quad \P-a.s.
    \end{align*}
    Thus, by Proposition \ref{prop:2.5},
    \begin{equation}\label{equ:de:dfield}
        w^{\vz,i}(x_0) = Y^{x_0,\vz,[t_1,t_2]}_{t_i} = \ol{Y}^{x_0,[t_i,t_{i + 1}]}_{t_i}\quad \ol{\P}-a.s.
    \end{equation}
    
    For any $\ol{\xi}\in \cL^2(\cF^{\ol{W}}_{t_i})$, 
    the existence and the uniqueness of the solution to \eqref{fbsde:equ:de:field}
    follow from the definition of $\ol{\dz}$. This completes the proof of Lemma \ref{lemma:technic:de}(3).

    \smallskip

    By \eqref{equ:de:dfield}, Lemma \ref{lemma:technic:de}(4) holds when $\ol{\xi} \in\R^n$.
    Notice that $w^{\vz,i}$ is Lipschitz, for the general $\ol{\xi}$,
    using the standard approximation method with simple function
    (see \cite[Step 3 in the Proof of Theorem 11.3.4]{Cvitanic:Zhang:13} for example).
    This completes the proof of Lemma \ref{lemma:technic:de}(4). 

    \smallskip

    It is clear that Lemma \ref{lemma:technic:de}(5) holds when $\ol{\xi} \in\R^n$.
    Thus, for any simple function $\ol{\xi} = \sum^l_{k = 1}x_k\1_{A_k}\in \cL^2(\cF^{\ol{W}}_{t_i})$, Lemma \ref{lemma:technic:de}(5) still holds.
    For the general $\ol{\xi}\in \cL^2(\cF^{\ol{W}}_{t_i})$, let 
    $$(\ol{X}^{\ol{\xi},[t_i,t_{i + 1}]},\ol{Y}^{\ol{\xi},[t_i,t_{i + 1}]},\ol{Z}^{\ol{\xi},[t_i,t_{i + 1}]})$$
    be the unique solution of FBSDE \eqref{fbsde:equ:de:field}.
    Consider a sequence of simple functions $\{\ol{\xi}_l\}\subset \cL^2(\cF^{\ol{W}}_{t_i})$ with $\ol{\xi}_l\to\ol{\xi}$.
    Then, by the fact that Lemma \ref{lemma:technic:de}(5)  holds for simple function,
    for any $l\in\bN$, there is a predictable process on $[t,T]$ denoted by $\wt{Z}^{\ol{\xi}_{l},\vz,[t_1,t_2]}\in\cL^{2,2}$ such that
    $$\ol{Z}^{\ol{\xi}_{l},\vz,[t_1,t_2]} = \left(\sqrt{1 - (\vz(\cdot))^2}\, \wt{Z}^{\ol{\xi}_{l},\vz,[t_1,t_2]}, \vz(\cdot)\wt{Z}^{\ol{\xi}_{l},\vz,[t_1,t_2]}\right)$$  
    Moreover, by Lemma \ref{lemm:p-estimate}, for any $q\in\bN$,
    \begin{align*}
        &\E\left[\int^{t_2}_{t_1} \left|\wt{Z}^{\ol{\xi}_{l + q},\vz,[t_1,t_2]}_s - \wt{Z}^{\ol{\xi}_l,\vz,[t_1,t_2]}_s\right|^2\,\od s\right]\\
        &\quad = \E\left[\int^{t_2}_{t_1} \left|\ol{Z}^{\ol{\xi}_{l + q},\vz,[t_1,t_2]}_s - \ol{Z}^{\ol{\xi}_l,\vz,[t_1,t_2]}_s\right|^2\,\od s\right]
        \lesssim \E\left[|\ol{\xi}_{l + q} - \ol{\xi}_l|^2\right].
    \end{align*}
    Thus, there is a predictable process $\wt{Z}$ such that $\wt{Z}^{\ol{\xi}_l,\vz,[t_1,t_2]} \to \wt{Z}$ in $\cL^{2,2}$.
    Notice that 
    $$\ol{Z}^{\ol{\xi}_l,\vz,[t_1,t_2]} \to \ol{Z}^{\ol{\xi},\vz,[t_1,t_2]}\quad\text{in}\quad \cL^{2,2}$$
    and 
    $$\ol{Z}^{\ol{\xi}_l,\vz,[t_1,t_2]} \to  (\sqrt{1 - (\vz(\cdot))^2}\,\wt{Z}, \vz(\cdot)\wt{Z})\quad\text{in}\quad \cL^{2,2}.$$
    Then $\ol{Z}^{\ol{\xi},\vz,[t_1,t_2]} = (\sqrt{1 - (\vz(\cdot))^2}\,\wt{Z}, \vz(\cdot)\wt{Z})$ in $\cL^{2,2}$.
    And this completes the proof of Lemma \ref{lemma:technic:de}(5). 
\end{proof}

\subsection{On arbitrary finite time interval}
\label{sec:arbitrary}

In this section, with the help of the decoupling field,
we prove the main theorem for the time intervals with arbitrary finite length.
We estimate the estimation for $p$-coupling variance $CV_p([t,T])$ of the FBSDE \eqref{fbsde:equ:re}
for any $T\in (0,\fz)$.
\begin{theo}\label{thm:large:decoupling}
    If the FBSDE \eqref{fbsde:equ:re}
    \begin{itemize}
        \item[(1)] satisfies Assumption \ref{coefficient:assume:general}, Assumption \ref{fbsde:basic:assume:p:general} with some $p\in [2,\fz)$ and Assumption \ref{ass:linear:diffusion};
        \item[(2)] admits a regular decoupling field $w$ with the Lipschitz constants $L_w$;
        \item[(3)] \eqref{p-condition} holds with $L_g$ replaced by $L_w$.
    \end{itemize}
    Let $\Tz := (X,Y,Z)$ be the solution of \eqref{fbsde:equ:re}.
    Then the estimation for $p$-coupling variance holds
    \begin{align}\label{equ:large:decoupling}
        CV_p([t,T])
        &\leq c_{\eqref{equ:large:decoupling}} \left\{\E\left[|\xi - \xi^{\vz}|^p + |g(X_T) - g^{\vz}(X_T)|^p\right] \right.\notag\\
        &\quad \qquad \left. + \E\left[U_p([t,T])\right] + S_p([t,T])\right\},
    \end{align}
    where the constant $c_{\eqref{equ:large:decoupling}}$ only relies on $p$, $T - t$
    and all the Lipschitz constant in Assumption \ref{coefficient:assume:general},
    and $U_p([t,T])$ and $S_p([t,T])$ are defined as in Theorem \ref{thm:small:decoupling}.
\end{theo}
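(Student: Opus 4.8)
The plan is to cut $[t,T]$ into small pieces on which Theorem~\ref{thm:small:decoupling} applies, and to glue the resulting estimates using the decoupling field to furnish the terminal data. First I fix a partition $t=t_0<t_1<\cdots<t_n=T$ with mesh $t_{i+1}-t_i\le\ol{\dz}$, where $\ol{\dz}$ is the constant from Lemma~\ref{lemma:technic:de}; by Lemma~\ref{lemma:p-exits:decoupling} both \eqref{fbsde:equ:re} and its $\cC_{\vz}$-image possess global $\cL^p$-solutions, and on each $[t_i,t_{i+1}]$ the decoupling identity \eqref{dfield:property} shows that the restriction of the global solution is the unique local solution with terminal function $w(t_{i+1},\cdot)$ (and, on the coupled side, with $w^{\vz,i+1}$, the coupled decoupling field supplied by Lemma~\ref{lemma:technic:de}). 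Since $w(t_{i+1},\cdot)$ and $w^{\vz,i+1}$ are $L_w$-Lipschitz and \eqref{p-condition} holds with $L_g$ replaced by $L_w$, the proof of Theorem~\ref{thm:small:decoupling} goes through verbatim on each piece with $g,g^{\vz}$ replaced by these fields, producing a local estimate whose terminal contribution is $\E[|w(t_{i+1},X_{t_{i+1}})-w^{\vz,i+1}(X_{t_{i+1}})|^p]$ and whose source terms are $\E[U_p([t_i,t_{i+1}])]$ and $S_p([t_i,t_{i+1}])$.

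Write $\theta_i:=\E[|w(t_i,X_{t_i})-w^{\vz,i}(X_{t_i})|^p]$ and $\Delta_i:=\E[|X_{t_i}-X^{\vz}_{t_i}|^p]$. The device that breaks the apparent forward--backward circularity is that $\theta_i$ satisfies a purely backward recursion. Comparing on $[t_i,t_{i+1}]$ the original local solution with the coupled local solution \emph{started at the same initial value} $X_{t_i}$, the two problems have identical initial data, so the initial-error term vanishes; by Lemma~\ref{lemma:technic:de}(4) the coupled solution equals $w^{\vz,i}(X_{t_i})$ at $t_i$, and by \eqref{dfield:property} the original equals $w(t_i,X_{t_i})$. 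The small-interval stability estimate then yields
\begin{equation*}
\theta_i\le c\bigl(\theta_{i+1}+\E[U_p([t_i,t_{i+1}])]+S_p([t_i,t_{i+1}])\bigr),
\end{equation*}
with $\theta_n=\E[|g(X_T)-g^{\vz}(X_T)|^p]$ because $w(T,\cdot)=g$ and $w^{\vz,n}=g^{\vz}$. Iterating downward, and using that summing $\E[U_p(\cdot)]$ and $S_p(\cdot)$ over the pieces costs only a fixed multiple of their values on $[t,T]$ (from $\sum_j a_j^q\le(\sum_j a_j)^q$ for $q\ge1$), gives $\theta_i\le C\bigl(\E[|g(X_T)-g^{\vz}(X_T)|^p]+\E[U_p([t,T])]+S_p([t,T])\bigr)$ for every $i$.

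Once the $\theta_i$ are controlled, the forward errors follow by a discrete Gronwall iteration. Set $B:=\E[|\xi-\xi^{\vz}|^p]+\E[|g(X_T)-g^{\vz}(X_T)|^p]+\E[U_p([t,T])]+S_p([t,T])$. Comparing on $[t_i,t_{i+1}]$ the original and the coupled global solutions --- now the latter starts at $X^{\vz}_{t_i}$, so the initial term is $\Delta_i$ --- Theorem~\ref{thm:small:decoupling} gives $CV_p([t_i,t_{i+1}])\le c(\Delta_i+\theta_{i+1}+\E[U_p([t_i,t_{i+1}])]+S_p([t_i,t_{i+1}]))$; since $\Delta_{i+1}\le CV_p([t_i,t_{i+1}])$, inserting the bound for $\theta_{i+1}$ produces $\Delta_{i+1}\le c\,\Delta_i+C'B$, so after $n$ steps every $\Delta_i\le C''B$. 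Finally $CV_p([t,T])\le n^{p/2-1}\sum_i CV_p([t_i,t_{i+1}])$ (the suprema split over the partition and the time integrals combine through $(\sum a_j)^{p/2}\le n^{p/2-1}\sum a_j^{p/2}$); summing the local estimates and substituting the bounds for $\Delta_i$ and $\theta_{i+1}$ yields $CV_p([t,T])\le c_{\eqref{equ:large:decoupling}}B$, i.e.\ \eqref{equ:large:decoupling}, with a constant depending only on $p$, on $n\sim(T-t)/\ol{\dz}$, and on the Lipschitz constants.

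The main obstacle is exactly this gluing of the forward ($X$) and backward ($Y,Z$) discrepancies at the nodes: a naive summation is circular, because the terminal error of one piece feeds back, through the factor $L_w$, into the forward error on that same piece, with no smallness making the resulting self-dependence absorbable. The decoupling-field property resolves this by letting me match the initial conditions of the two local problems, which isolates the clean backward recursion for $\theta_i$ that can be solved before the forward errors are propagated. A secondary point to verify is that Lemma~\ref{lemma:technic:de}(5) keeps the coupled $\ol{Z}$ in the product form $(\sqrt{1-\vz^2}\,\wt{Z},\vz\wt{Z})$ on each subinterval, so that the identities \eqref{equ:Z_process}--\eqref{equ:Z_process:vz} controlling the two $Z$-terms of $CV_p$ remain available.
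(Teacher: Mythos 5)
Your proposal is correct and follows essentially the same route as the paper: partition with mesh governed by Lemma~\ref{lemma:technic:de} and Theorem~\ref{thm:small:decoupling}, the backward recursion for $\theta_i=\E[|w(t_i,X_{t_i})-w^{\vz,i}(X_{t_i})|^p]$ obtained by comparing the original and coupled local solutions started at the \emph{same} initial value $X_{t_i}$ (so the initial-error term vanishes, via Lemma~\ref{lemma:technic:de}(4)), the forward propagation of the node errors through $\Delta_{i+1}\le CV_p([t_i,t_{i+1}])$, and the final recombination by the power-sum inequality. The only difference is bookkeeping order — you solve the backward $\theta$-recursion before propagating the forward errors, whereas the paper unrolls the forward $CV_p$-recursion first, accumulating the $\theta_k$ terms, and then resolves them by the same backward recursion — which is immaterial.
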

\begin{proof}
    Let $\dz := \min\{\dz_1,\dz_2\}$ 
    where 
    \begin{itemize}
        \item[(1)] $\dz_1$ is the constant given in Lemma \ref{lemma:technic:de};
        \item[(2)] $\dz_2$ is the constant given in Theorem \ref{thm:small:decoupling}.
    \end{itemize}
    Consider a partition $t = t_0 < t_1 < t_2 < \cdots < t_n = T$ satisfying $t_{i + 1} - t_i \leq \dz$ for any $i\in\{0,\cdots,n - 1\}$,
    the solution of FBSDE \eqref{fbsde:equ:re} on $[t_i,t_{i + 1}]$ satisfies
    \begin{align}\label{fbsde:equ:small}
        \begin{cases}
            X_s = X_{t_i} + \int^s_{t_i} b(u,X_u,Y_u,Z_u)\,\od u + \int^s_{t_i} \sz(u,X_u,Y_u) + A(u,Z_u)\,\od W_u;\\
            \ \\
            Y_s = w(t_{i + 1},X_{t_{i + 1}}) + \int^{t_{i + 1}}_s f(u,X_u,Y_u,Z_u)\,\od u - \int^{t_{i + 1}}_s Z_u\,\od W_u,
          \end{cases} 
    \end{align}
    By the fact that the decoupling field $w$ is regular with $L_w L_{\mu,3} < 1$,
    Theorem \ref{thm:small:decoupling} shows that, for any $i\in\{0,1,\cdots,n - 1\}$,
    \begin{align*}
        CV_p([t_i,t_{i + 1}]) 
        &\leq c_{\eqref{equ:small:decoupling}}\E\left[|X_{t_i} - X^{\vz}_{t_i}|^p + |w(t_{i + 1},X_{t_{i + 1}}) - w^{\vz}(t_{i + 1},X_{t_{i + 1}})|^p\right]\\
        &\qquad  + c_{\eqref{equ:small:decoupling}}\E\left[U_p([t_i,t_{i + 1}])\right] + c_{\eqref{equ:small:decoupling}}S_p([t_i,t_{i + 1}]).
    \end{align*}
    W.l.o.g, we assume that $T - t = N\dz$ for some $N\in\bN^+$ and $t_i = t + i\dz$.
    Then for any $i\in\{0,\cdots,N - 1\}$, using Theorem \ref{thm:small:decoupling} recursively,
    notice that the constant $c_{\eqref{equ:small:decoupling}} \geq 1$, we have
    \begin{align*}
        &CV_p([t_i,t_{i + 1}])\\
        &\quad\leq c_{\eqref{equ:small:decoupling}}\E\left[|X_{t_i} - X^{\vz}_{t_i}|^p\right]\\
        &\quad\qquad + c_{\eqref{equ:small:decoupling}}\E\left[|w(t_{i + 1},X_{t_{i + 1}}) - w^{\vz}(t_{i + 1},X_{t_{i + 1}})|^p\right]\\
        &\quad\qquad + c_{\eqref{equ:small:decoupling}}\E\left[U_p([t_i,t_{i + 1}])\right] + c_{\eqref{equ:small:decoupling}}S_p([t_i,t_{i + 1}])\\
        &\quad\leq c_{\eqref{equ:small:decoupling}} CV_p([t_{i - 1},t_i])\\
        &\quad\qquad + c_{\eqref{equ:small:decoupling}}\E\left[|w(t_{i + 1},X_{t_{i + 1}}) - w^{\vz}(t_{i + 1},X_{t_{i + 1}})|^p\right]\\
        &\quad\qquad + c_{\eqref{equ:small:decoupling}}\E\left[U_p([t_i,t_{i + 1}])\right] + c_{\eqref{equ:small:decoupling}}S_p([t_i,t_{i + 1}])\\
        &\quad\leq (c_{\eqref{equ:small:decoupling}})^2 CV_p([t_{i - 2},t_{i - 1}])\\
        &\quad\qquad + (c_{\eqref{equ:small:decoupling}})^2 \sum^i_{k = i - 1}\E\left[|w(t_{k + 1},X_{t_{k + 1}}) - w^{\vz}(t_{k + 1},X_{t_{k + 1}})|^p\right]\\
        &\quad\qquad + (c_{\eqref{equ:small:decoupling}})^2 \sum^i_{k = i - 1}\E\left[U_p([t_k,t_{k + 1}])\right] + (c_{\eqref{equ:small:decoupling}})^2 \sum^i_{k = i - 1}S_p([t_k,t_{k + 1}])\\
        &\quad\leq\cdots\\
        &\quad\leq (c_{\eqref{equ:small:decoupling}})^{i + 1} \E\left[|\xi - \xi^{\vz}|^p\right]\\
        &\quad\qquad + (c_{\eqref{equ:small:decoupling}})^{i + 1} \sum^i_{k = 0}\E\left[|w(t_{k + 1},X_{t_{k + 1}}) - w^{\vz}(t_{k + 1},X_{t_{k + 1}})|^p\right]\\
        &\quad\qquad + (c_{\eqref{equ:small:decoupling}})^{i + 1} \sum^i_{k = 0}\E\left[U_p([t_k,t_{k + 1}])\right]+ (c_{\eqref{equ:small:decoupling}})^{i + 1} \sum^i_{k = 0}S_p([t_k,t_{k + 1}])\\
        &\quad\leq (c_{\eqref{equ:small:decoupling}})^{N + 1} \E\left[|\xi - \xi^{\vz}|^p\right]\\
        &\quad\qquad + (c_{\eqref{equ:small:decoupling}})^{N + 1} \sum^{N - 1}_{k = 0}\E\left[|w(t_{k + 1},X_{t_{k + 1}}) - w^{\vz}(t_{k + 1},X_{t_{k + 1}})|^p\right]\\
        &\quad\qquad + (c_{\eqref{equ:small:decoupling}})^{N + 1} \E\left[U_p([t,T])\right] + N(c_{\eqref{equ:small:decoupling}})^{N + 1} S_p([t,T]),
    \end{align*}
    where the last inequality holds by Aoki--Rolewicz inequality and the fact that $p\in [2,\fz)$.
    Thus, we have
    \begin{align}\label{equ:est:01}
        \frac{CV_p([t,T])}{N^{p/2 - 1}}
        & \leq \sum^{N - 1}_{k = 0}CV_p([t_k,t_{k + 1}])\notag\\
        & \leq N(c_{\eqref{equ:small:decoupling}})^{N + 1}\E\left[|\xi - \xi^{\vz}|^p\right]\notag\\
        &\qquad + N(c_{\eqref{equ:small:decoupling}})^{N + 1} \sum^{N - 1}_{k = 0}\E\left[|w(t_{k + 1},X_{t_{k + 1}}) - w^{\vz}(t_{k + 1},X_{t_{k + 1}})|^p\right]\notag\\
        &\qquad + N(c_{\eqref{equ:small:decoupling}})^{N + 1} \E\left[U_p([t,T])\right] + N^2(c_{\eqref{equ:small:decoupling}})^{N + 1} S_p([t,T]).
    \end{align}

    Now, it is suffices to estimate the term 
    $$\E\left[|w(t_{i + 1},X_{t_{i + 1}}) - w^{\vz}(t_{i + 1},X_{t_{i + 1}})|^p\right].$$
    By Lemma \ref{lemma:technic:de}, for any $i\in\{1,2,\dots,N\}$, we have
    $$w^{\vz}(t_i,X_{t_i}) = w^{\vz,i}(X_{t_i}) = \overline{Y}_{t_i}\quad\P-a.s.,$$
    where $w^{\vz,i}$ is defined as in Lemma \ref{lemma:technic:de} and
    $\overline{\Tz} := (\overline{X},\overline{Y},\overline{Z})$ is the solution of the FBSDE \eqref{fbsde:equ:de:field} with the initial value $\ol{\xi} = X_{t_i}$.

    Consider the equation \eqref{fbsde:equ:var} with $g$ replaced by $w^{\vz}(t_{i + 1},\cdot)$,
    following the same method in the proof of Theorem \ref{thm:small:decoupling}, we have
    \begin{align*}
        &\E\left[|w(t_i,X_{t_i}) - w^{\vz}(t_i,X_{t_i})|^p\right]\\
        &\quad = \E\left[\left|Y_{t_i} - \overline{Y}_{t_i}\right|^p\right]\\
        &\quad \leq c_{\eqref{equ:small:decoupling}}\E\left[|w(t_{i + 1},X_{t_{i + 1}}) - w^{\vz}(t_{i + 1},X_{t_{i + 1}})|^p\right]\\
        &\quad\qquad + c_{\eqref{equ:small:decoupling}} \E\left[U_p[t_i,t_{i + 1}]\right] + c_{\eqref{equ:small:decoupling}} S_p([t_i,t_{i + 1}])\\
        &\quad \leq \cdots\\
        &\quad \leq (c_{\eqref{equ:small:decoupling}})^{N - i}\E\left[|g(X_T) - g^{\vz}(X_T)|^p\right]\\
        &\quad\qquad + (c_{\eqref{equ:small:decoupling}})^{N - i}\E\left[U_p([t_i,T])\right] + (N - i)(c_{\eqref{equ:small:decoupling}})^{N - i} S_p([t_i,T])\\
        &\quad \leq (c_{\eqref{equ:small:decoupling}})^N\E\left[|g(X_T) - g^{\vz}(X_T)|^p\right]\\
        &\quad\qquad + (c_{\eqref{equ:small:decoupling}})^N\E\left[U_p([t,T])\right] + N(c_{\eqref{equ:small:decoupling}})^N S_p([t,T]),
    \end{align*}
    then
    \begin{align}\label{equ:est:02}
        &\sum^{N - 1}_{i = 0}\E\left[|w(t_{i + 1},X_{t_{i + 1}}) - w^{\vz}(t_{i + 1},X_{t_{i + 1}})|^p\right]\notag\\
        &\quad \leq N(c_{\eqref{equ:small:decoupling}})^N\E\left[|g(X_T) - g^{\vz}(X_T)|^p\right]\notag\\
        &\quad\qquad + N(c_{\eqref{equ:small:decoupling}})^N \E\left[U_p([t,T])\right] + N^2(c_{\eqref{equ:small:decoupling}})^N S_p([t,T]).
   \end{align}
   By \eqref{equ:est:01} and \eqref{equ:est:02}, the desired estimation holds:
   \begin{align*}
    CV_p([t,T])
    &\leq N(c_{\eqref{equ:small:decoupling}})^{N + 1}\E\left[|\xi - \xi^{\vz}|^p\right]\\
    &\qquad + N^2(c_{\eqref{equ:small:decoupling}})^{2N + 1}\E\left[|g(X_T) - g^{\vz}(X_T)|^p\right]\\
    &\qquad + 2N^2(c_{\eqref{equ:small:decoupling}})^{2N + 1}\E\left[U_p([t,T])\right] + 2N^2(c_{\eqref{equ:small:decoupling}})^{2N + 1}S_p([t,T]).
   \end{align*}
   This finishes the proof of Theorem \ref{thm:large:decoupling}.
\end{proof}
\begin{rema}
    In some special case, Theorem \ref{thm:large:decoupling} agrees with the known results.
    \begin{itemize}
        \item[(1)] For the BSDEs case, 
        i.e. the coefficients $b$, $\mu$ are zero functions, 
        and $f$, $g$ do not rely on $x$,
        Theorem \ref{thm:large:decoupling} obtain the same estimation
        for the $p$-coupling variance as \cite[Theorem 6.3]{Geiss:Ylinen:21};
        \item[(2)] For SDEs case,
        i.e. the coefficients $f$ and $g$ are zero functions,
        and $b$, $\mu$ do not rely on $y$ or $z$,
        Theorem \ref{thm:large:decoupling} obtain the same estimation
        for the $p$-coupling variance for SDE as \cite[Theorem 5.4]{Geiss:Zhou:24}.
    \end{itemize}
 \end{rema}

 \section{Applications on Regularity}
 \label{sec:regularity}
 In this section, we turn to the application for the $p$-coupling variance.
 The main results include two kinds of regularity for the solution to FBSDEs:
 \begin{itemize}
    \item[(1)] $\cL^p$ regularity in time;
    \item[(2)] $\D_{1,2}$ differentiability.
 \end{itemize}
 
In this section, in order to shorten the notation, we again use
$\mu$ to represent the coefficients for the diffusion in the forward equation, i.e. 
$$\mu = \sz + A.$$
We always consider the FBSDE \eqref{fbsde:equ} with the Assumption \ref{coefficient:assume:general},
Assumption \ref{fbsde:basic:assume:p:general} and Assumption \ref{ass:linear:diffusion}.
Recall that the term $CV_p([t,T])$ denotes the $p$-coupling variance which is defined by \eqref{equ:decouplingVariance}.

In order to present the results more clearly,
we use the following abbreviations:
For all $t\leq a < b \leq T$,
\begin{align*}
    I_{p,b}([a,b]) &:= \E\left[\Biggl(\int_a^b |b^0(s)|\,\od s\Biggr)^p\right],\quad 
    I_{p,f}([a,b]) := \E\left[\Biggl(\int_a^b |f^0(s)|\,\od s\Biggr)^p\right]\\
    I_{p,\mu}([a,b]) &:= \E\left[\Biggl(\int_a^b |\mu^0(s)|^2\,\od s\Biggr)^{p/2}\right],\\
    I_{p,T} &:= \E\left[|\xi|^p + |g(0)|^p\right] + I_{p,b}([t,T]) + I_{p,f}{[t,T]} + I_{p,\mu}([t,T]),
\end{align*}
where $b^0$, $f^0$ and $\mu^0$ are defined by \eqref{equ:def:0function}.
Moreover, the full potential is given as
\begin{align*}
    P_p([t,T]) &:= \E\left[|g(X_T) - g^{\vz}(X_T)|^p + U_p([t,T])\right],
\end{align*}
with $U_p$ defined as in Theorem \ref{thm:small:decoupling}.

 \subsection{Regularity in Time}
 In this section, we aim to get the $\cL^p$ regularity in time for the solution of the FBSDE \eqref{fbsde:equ},
 i.e. the estimation of
    $$\E\left[|X_s - X_r|^p + |Y_s - Y_r|^p + \left(\int^r_s |Z_u|^2\,\od u\right)^{p/2} \right],$$
for all $t\leq s < r\leq T$, where $(X,Y,Z)$ is the solution of the FBSDE \eqref{fbsde:equ}.

By Jensen's inequality, we have
\begin{equation}\label{equ:de:path-regularity}
    \|Y_s - Y_r\|_p
    \leq \left\|Y_s - \E\left[Y_r| \cG^r_s\right]\right\|_p
         + \left\|\E\left[Y_r| \cG^r_s\right] - Y_r\right\|_p \leq 3\|Y_r - Y_s\|_p,
\end{equation} 
where $\cG^r_s$ is the $\sz$-algebra defined as in Theorem \ref{theo:path-regularity:BSDE}.
The term $\left\|\E\left[Y_r| \cG^r_s\right] - Y_r\right\|_p$ can be controlled by $\|Y_r - Y^{(s,r]}_r\|$ by Theorem \ref{theo:path-regularity:BSDE}.
For the other term $\left\|Y_s - \E\left[Y_r| \cG^r_s\right]\right\|_p$, it can be estimated by a standard method which only relies on the a prior estimation for the solution.
Though, we also find that the coupling variance can be used to improve the estimates for $\left\|Y_s - \E\left[Y_r| \cG^r_s\right]\right\|_p$.

\smallskip

In this part, we recall the notation defined as in Section~\ref{sec:specialSetting}
where $X^{(r,s]}$ denotes the element belongs in $\cC_{\vz}(X)$ with $\vz = \1_{(r,s]}$.
The main theorem is as follows.
\begin{theo}\label{theorem:path-regularity}
    Assume the FBSDE \eqref{fbsde:equ} satisfies one of the conditions:
    \begin{itemize}
        \item[(1)] all the conditions in Theorem \ref{thm:small:decoupling} with some $p\in [2,\fz)$ and sufficiently small $T - t$;
        \item[(2)] all the conditions in Theorem \ref{thm:large:decoupling} with some $p\in [2,\fz)$ and arbitrary $T\in (0,\fz)$.
    \end{itemize}
    Let $(X,Y,Z)$ be the solution of the FBSDE \eqref{fbsde:equ}.
    Then, for all $t\leq s < r\leq T$ and the coupling function $\vz = \1_{(s,r]}$, we have
    \begin{align}\label{equ:est:cv:p}
        CV_p([t,T])
        &\leq c_{\eqref{equ:est:cv:p}} \left\{P_p([t,T]) + I_{p,\mu}([s,r]) \right.\notag\\
        &\quad\quad \left. + [(L_{\mu,1})^p + (L_{\mu,2})^p] I_{p,T} (r - s)^{p/2}\right\}.
    \end{align}
    Moreover,
    \begin{align}\label{equ:path:z}
        \E\left[ \left(\int^r_s |Z_u|^2\,\od u\right)^{p/2}\right] \leq CV_p([t,T]),
    \end{align}
    \begin{align}\label{equ:path:x}
        \E\left[|X_s - X_r|^p\right]
        &\leq c_{\eqref{equ:path:x}}\left\{ 
            I_{p,b}([s,r]) + I_{p,\mu}([s,r]) + (L_{\mu,3})^pCV_p([t,T])\right.\notag\\
            &\qquad\qquad + c_{\eqref{p-estimate}} [(L_{b,1})^p + (L_{b,2})^p] I_{p,T} (r - s)^p\notag\\
            &\qquad\qquad + c_{\eqref{p-estimate}}\left[(L_{\mu,1})^p + (L_{\mu,2})^p\right]I_{p,T} (r - s)^{p/2\notag}\\
            &\left. \qquad\qquad + (L_{b,3})^{p}(r - s)^{p/2}CV_p([t,T])\right\},
    \end{align}
    and
    \begin{align}\label{equ:path:y}
        \E\left[|Y_s - Y_r|^p\right]
        & \leq c_{\eqref{equ:path:y}}\left\{
            I_{p,f}([s,r]) + CV_p([t,T])\right.\notag\\
            &\qquad\qquad + c_{\eqref{p-estimate}}\left[(L_{f,1})^p + (L_{f,2})^p\right]I_{p,T} (r - s)^p\notag\\
            &\left. \qquad\qquad + (L_{f,3})^p (r - s)^{p/2} CV_p([t,T])\right\},
    \end{align}
    where 
    \begin{itemize}
        \item the constants $L_{b,i}$, $L_{\mu,i}$ and $L_{f,i}$ with $i\in\{1,2,3\}$ are the Lipschitz constants defined as in Assumption \ref{coefficient:assume:general};
        \item the positive constant $c_{\eqref{equ:est:cv:p}}$ only rely on $p$, $c_{\eqref{equ:small:decoupling}}$ (or $c_{\eqref{equ:large:decoupling}}$) and $c_{\eqref{p-estimate}}$;
        \item the positive constants $c_{\eqref{equ:path:x}}$ and $c_{\eqref{equ:path:y}}$ only rely on $p$.
    \end{itemize}
\end{theo}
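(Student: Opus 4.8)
The plan is to handle the four displayed estimates in the order \eqref{equ:path:z}, \eqref{equ:est:cv:p}, \eqref{equ:path:x}, \eqref{equ:path:y}, since the last two depend on the first two. The cornerstone is an identity hidden in the definition \eqref{equ:decouplingVariance} of the coupling variance: with the indicator coupling $\vz=\1_{(s,r]}$ one has $\sqrt{1-\vz(u)^2}=0$ exactly on $(s,r]$ and $=1$ elsewhere, so the fourth summand of $CV_p([t,T])$ collapses to $\bigl(\int_s^r|Z_u|^2\,\od u\bigr)^{p/2}$. As every summand in $CV_p$ is nonnegative, discarding the other three immediately yields \eqref{equ:path:z}. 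This is the bridge that routes the path-regularity of $Z$ through a single coupling-variance bound.

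For \eqref{equ:est:cv:p} I would feed $\vz=\1_{(s,r]}$ into the master estimate of Theorem~\ref{thm:small:decoupling} (under hypothesis (1)) or Theorem~\ref{thm:large:decoupling} (under hypothesis (2)), whose right-hand side is $\E[|\xi-\xi^{\vz}|^p+|g(X_T)-g^{\vz}(X_T)|^p]+\E[U_p([t,T])]+S_p([t,T])$. Three simplifications occur. First, $\xi$ is $\cF_t$-measurable with $t\le s$, whereas $\cC_{\vz}$ perturbs the driving noise only on $(s,r]$; hence $\xi^{\vz}=\xi$ a.s.\ and the $\xi$-term drops. Second, $\E[|g(X_T)-g^{\vz}(X_T)|^p]+\E[U_p([t,T])]$ is precisely $P_p([t,T])$. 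Third, evaluating $S_p$ at $\vz=\1_{(s,r]}$ gives $\int_t^T\vz^2\,\od u=r-s$, and since $A$ is linear we have $\sz^0=\mu^0$, so $\E[(\int_t^T\vz^2(\sz^0)^2\,\od u)^{p/2}]=I_{p,\mu}([s,r])$, while the residual factor $\E[(L_{\mu,1})^p\sup|X|^p+(L_{\mu,2})^p\sup|Y|^p]$ is controlled by $[(L_{\mu,1})^p+(L_{\mu,2})^p]\,c_{\eqref{p-estimate}}I_{p,T}$ through the a priori estimate \eqref{p-estimate}. Collecting these yields \eqref{equ:est:cv:p}, with $c_{\eqref{p-estimate}}$ absorbed into $c_{\eqref{equ:est:cv:p}}$.

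For \eqref{equ:path:x} and \eqref{equ:path:y} I would read the increments off \eqref{fbsde:equ} as $X_r-X_s=\int_s^r b(u,\Tz_u)\,\od u+\int_s^r \mu(u,\Tz_u)\,\od W_u$ and $Y_s-Y_r=\int_s^r f(u,\Tz_u)\,\od u-\int_s^r Z_u\,\od W_u$. I would apply an elementary convexity inequality to separate drift from diffusion, dominate the stochastic integrals by the upper BDG constant of Lemma~\ref{BDG:p}, and in each integrand invoke the Lipschitz bound $|h(u,\Tz_u)|\le|h^0(u)|+L_{h,1}|X_u|+L_{h,2}|Y_u|+L_{h,3}|Z_u|$ for $h\in\{b,\mu,f\}$. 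The $|h^0|$ contributions produce $I_{p,b}([s,r])$, $I_{p,f}([s,r])$, $I_{p,\mu}([s,r])$; the $|X_u|,|Y_u|$ contributions are handled by extracting $\sup_{u\in[t,T]}|X_u|$ and $\sup_{u\in[t,T]}|Y_u|$, which picks up $(r-s)^p$ (drift) or $(r-s)^{p/2}$ (diffusion) and then $c_{\eqref{p-estimate}}I_{p,T}$ via \eqref{p-estimate}. The only delicate pieces are the $Z$-contributions: in the drift and in $f$ they appear as $\int_s^r|Z_u|\,\od u$, to which Cauchy--Schwarz gives a factor $(r-s)^{p/2}$ and reduces matters to $\E[(\int_s^r|Z_u|^2\,\od u)^{p/2}]$, whereas in the diffusion the term $(L_{\mu,3})^2\int_s^r|Z_u|^2\,\od u$ enters without the extra $(r-s)$ factor; in both cases \eqref{equ:path:z} converts $\E[(\int_s^r|Z_u|^2\,\od u)^{p/2}]$ into $CV_p([t,T])$. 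Matching terms gives \eqref{equ:path:x} and \eqref{equ:path:y} with $p$-dependent constants.

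The main obstacle — and the reason the statement is routed through $CV_p$ — is exactly this $Z$-contribution. Unlike $X$ and $Y$, the process $Z$ lies only in $\cL^{2,p}$, so $\int_s^r|Z_u|^2\,\od u$ cannot be bounded by $(r-s)$ times an a priori estimate and does not shrink from smallness of $r-s$ alone; the genuinely nontrivial input is \eqref{equ:path:z}, which transfers this quantity to the coupling variance, whose own decay is supplied \emph{independently} by the FBSDE coupling estimate \eqref{equ:est:cv:p}. The conceptual point to verify is that this chain is non-circular, i.e.\ that \eqref{equ:est:cv:p} rests on the transference/coupling theorems and the a priori estimate rather than on the increment bounds \eqref{equ:path:x}--\eqref{equ:path:y}; everything else is bookkeeping with convexity, BDG and Cauchy--Schwarz.
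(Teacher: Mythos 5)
Your proposal is correct. For \eqref{equ:est:cv:p}, \eqref{equ:path:z} and \eqref{equ:path:x} it follows the paper's proof essentially verbatim: substitute $\vz=\1_{(s,r]}$ into Theorem~\ref{thm:small:decoupling} (or Theorem~\ref{thm:large:decoupling}), drop the $\xi$-term because $\xi^{\vz}=\xi$, evaluate $S_p([t,T])$ using $\int_t^T(\vz(u))^2\,\od u=r-s$, the identity $\sz^0=\mu^0$ and the a priori estimate \eqref{p-estimate}, read \eqref{equ:path:z} off the definition \eqref{equ:decouplingVariance}, and estimate the $X$-increment by convexity, H\"older, BDG, \eqref{p-estimate} and \eqref{equ:path:z}. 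The one genuine divergence is \eqref{equ:path:y}: you bound $Y_r-Y_s$ directly from the backward equation, controlling the martingale increment $\int_s^r Z_u\,\od W_u$ by the BDG inequality and then by $CV_p([t,T])$ through \eqref{equ:path:z}; since the BDG constant $\overline{K}_p$ depends only on $p$, this yields \eqref{equ:path:y} with the stated constant dependence, so your route is complete. The paper instead splits $\|Y_r-Y_s\|_p$ via \eqref{equ:de:path-regularity} into $\|Y_r-\E[Y_r\mid\cG^r_s]\|_p$, which Theorem~\ref{theo:path-regularity:BSDE} dominates by the $Y$-component of the coupling variance (namely $\E[|Y_r-Y^{(s,r]}_r|^p]$), plus $\|\E[Y_r\mid\cG^r_s]-Y_s\|_p$, where conditioning on $\cG^r_s$ annihilates the stochastic integral entirely (the martingale increment over $(s,r]$ has vanishing conditional expectation given $\cG^r_s$), leaving only the $f$-integral, handled by Jensen and H\"older. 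Thus in your argument the $CV_p([t,T])$ term of \eqref{equ:path:y} enters through the $Z$-component of \eqref{equ:decouplingVariance}, while in the paper's it enters through the $Y$-component; the resulting bounds have the same form. Your version is more elementary, using only BDG; the paper's detour buys the two-sided comparison $\E[|Y_r-Y_s|^p]\sim \E[|Y_r-Y^{(s,r]}_r|^p]+\E[|\E[Y_r\mid\cG^r_s]-Y_s|^p]$, which is in the spirit of the coupling method and is also usable for lower bounds. Your non-circularity check is accurate: \eqref{equ:est:cv:p} rests only on Theorems~\ref{thm:small:decoupling} and \ref{thm:large:decoupling} together with \eqref{p-estimate}, never on the increment estimates.
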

\begin{proof}
    The proofs of these two cases are similar.
    The only difference is, the case for short-length interval relies on Theorem~\ref{thm:small:decoupling},
    while the case for arbitrary finite-length time intervals relies on Theorem~\ref{thm:large:decoupling}.
    Here we only offer the detailed proof
    when the length of the time interval is sufficiently small.

    For the coupling function $\vz = \1_{(s,r]}$, 
    by Lemma~\ref{lemm:p-estimate}, we conclude that
    \begin{align*}
        S_p([t,T])
        &:= \left(\int^T_t (\vz(u))^2\,\od u\right)^{p/2}\\
        &\quad\quad \times \E\left[(L_{\mu,1})^p\sup_{u\in [t,T]}|X_u|^p + (L_{\mu,2})^p\sup_{u\in [t,T]}|Y_u|^p\right]\\
        &\quad\quad + \E\left[\left(\int^T_t (\vz(u))^2(\mu^0(u))^2\,\od u\right)^{p/2}\right]\\
        & = (r - s)^{p/2}\E\left[(L_{\mu,1})^p\sup_{u\in [t,T]}|X_u|^p + (L_{\mu,2})^p\sup_{u\in [t,T]}|Y_u|^p\right]\\
        &\quad\quad + \E\left[\left(\int^r_s (\mu^0(u))^2\,\od u\right)^{p/2}\right]\\
        & \leq c_{\eqref{p-estimate}} [(L_{\mu,1})^p + (L_{\mu,2})^p] I_{p,T} (r - s)^{p/2} + I_{p,\mu}([s,r]). 
    \end{align*}   
    Then, combined with Theorem~\ref{thm:small:decoupling}, we obtain
    \begin{align}\label{equ:est:regu:01}
        CV_p([t,T])
        & \leq c_{\eqref{equ:small:decoupling}} \left\{\E\left[|\xi - \xi^{(s,r]}|^p + P_p([t,T])\right] + S_p([t,T])\right\}\notag\\
        & \leq c_{\eqref{equ:small:decoupling}}P_p([t,T]) + c_{\eqref{equ:small:decoupling}}I_{p,\mu}([s,r])\notag\\
        &\quad\quad + c_{\eqref{equ:small:decoupling}}c_{\eqref{p-estimate}} [(L_{\mu,1})^p + (L_{\mu,2})^p] I_{p,T} (r - s)^{p/2},
    \end{align}
    where the last inequality holds due to the fact that $\xi = \xi^{(s,r]}$, and this proves \eqref{equ:est:cv:p}.

\smallskip

We first consider the regularity of the $Z$ process.
By \eqref{equ:decouplingVariance}, it holds
\begin{align*}
    \E\left[ \left(\int^r_s |Z_u|^2\,\od u\right)^{p/2}\right]
    & = \E\left[ \left(\int^T_t \left(1 - \sqrt{1 - \1_{(s,r]}(u)}\right)|Z_u|^2\,\od u\right)^{p/2}\right]\leq CV_p([t,T]).
\end{align*}

\smallskip

Next, for the $X$ process, we observe that
\begin{align*}
    \E\left[|X_s - X_r|^p\right]
    & \leq 2^{p - 1}\E\left[ \left|\int^r_s b(u,X_u,Y_u,Z_u)\,\od u\right|^p + \left|\int^r_s \mu(u,X_u,Y_u,Z_u)\,\od W_u\right|^p\right]\\
    & =: 2^{p - 1}\left[I_1(p) + I_2(p)\right].
\end{align*}
For $I_1(p)$, by H\"older's inequality and Assumption \ref{coefficient:assume:general}, we conclude that
\begin{align*}
    I_1(p) 
    &\leq \E\left[\left(\int^r_s |b^0(u)| + L_{b,1}|X_u| + L_{b,2}|Y_u| + L_{b,3}|Z_u|\,\od u\right)^p\right]\\
    &\lesssim \E\left[\left(\int^r_s |b^0(u)|\,\od u\right)^p\right] + (r - s)^{p - 1}\E\left[\int^r_s (L_{b,1})^p|X_u|^p + (L_{b,2})^p|Y_u|^p\,\od u\right]\\
    &\qquad + (r - s)^{p/2}\E\left[\left(\int^r_s (L_{b,3})^2|Z_u|^2\,\od u\right)^{p/2}\right]\\
    &\lesssim I_{p,b}([s,r]) + c_{\eqref{p-estimate}} [(L_{b,1})^p + (L_{b,2})^p] I_{p,T} (r - s)^p + (L_{b,3})^{p}(r - s)^{p/2}CV_p([t,T]). 
\end{align*}
For $I_2(p)$, by Burkholder--Davis--Gundy's inequalities, H\"older's inequality and Assumption \ref{coefficient:assume:general},
\begin{align*}
    I_2(p)
    &\lesssim \E\left[\left(\int^r_s |\mu^0(u)|^2 \,\od u\right)^{p/2}\right]\\
    &\qquad + (r - s)^{p/2 - 1} \E\left[\int^r_s (L_{\mu,1})^p|X_u|^p + (L_{\mu,2})^p|Y_u|^p\,\od u\right]\\
    &\qquad + \E\left[\left(\int^r_s (L_{\mu,3})^2|Z_u|^2\,\od u\right)^{p/2}\right]\\
    &\lesssim I_{p,\mu}([s,r]) + c_{\eqref{p-estimate}}\left[(L_{\mu,1})^p + (L_{\mu,2})^p\right]I_{p,T} (r - s)^{p/2} + (L_{\mu,3})^pCV_p([t,T]).
\end{align*}
Thus, we conclude that
\begin{align*}
    \E\left[|X_r - X_s|^p\right]
    &\lesssim I_1(p) + I_2(p)\\
    &\lesssim I_{p,b}([s,r]) + I_{p,\mu}([s,r])\\
    &\qquad + c_{\eqref{p-estimate}} [(L_{b,1})^p + (L_{b,2})^p] I_{p,T} (r - s)^p\\
    &\qquad + c_{\eqref{p-estimate}}\left[(L_{\mu,1})^p + (L_{\mu,2})^p\right]I_{p,T} (r - s)^{p/2}\\
    &\qquad + (L_{b,3})^{p}(r - s)^{p/2}CV_p([t,T])\\
    &\qquad + (L_{\mu,3})^pCV_p([t,T]),
\end{align*}
where the equivalence constants only relies on $p$.

Last, we turn to the estimation for $Y$ process.
By \eqref{equ:de:path-regularity} and
Theorem \ref{theo:path-regularity:BSDE}, one has
\begin{align}
    \E\left[|Y_r - Y_s|^p\right]
    &\sim \E\left[\left|Y_r - Y^{(s,r]}_r\right|^p\right] + \E\left[\left|\E\left[Y_r \big| \cG^r_s\right] - Y_s\right|^p\right]\\
    & =: II_1(p) + II_2(p).\notag
\end{align}
By the definition for the coupling variance $CV_p$, we have
\begin{align*}
    II_1(p) \leq CV_p([t,T]).
\end{align*}
For $II_2(p)$, by H\"older's inequality and Jensen's inequality, one has
\begin{align*}
    II_2(p)
    & = \E\left[\left|\E\left[Y_r - Y_s \big| \cG^r_s\right]\right|^p\right]\\
    & = \E\left[\left|\E\left[\int^r_s f(u,X_u,Y_u,Z_u)\,\od u \bigg| \cG^r_s\right]\right|^p\right]\\
    & \leq \E\left[\left|\int^r_s f(u,X_u,Y_u,Z_u)\,\od u\right|^p\right]\\
    & \lesssim \E\left[\left(\int^r_s |f^0(u)| + L_{f,1}|X_u| + L_{f,2}|Y_u| + L_{f,3}|Z_u|\,\od u\right)^p\right]\\
    & \lesssim \E\left[\left(\int^r_s |f^0(u)|\od u\right)^p\right]\\
    & \qquad + (r - s)^{p - 1} \E\left[\int^r_s (L_{f,1})^p|X_u|^p + (L_{f,2})^p|Y_u|^p\,\od u\right]\\
    & \qquad + (r - s)^{p/2} \E\left[\left(\int^r_s (L_{f,3})^2|Z_u|^2 \,\od u\right)^{p/2}\right]\\
    & \lesssim I_{p,f}([s,r]) + c_{\eqref{p-estimate}}\left[(L_{f,1})^p + (L_{f,2})^p\right]I_{p,T} (r - s)^p + (L_{f,3})^p (r - s)^{p/2} CV_p([t,T]).
\end{align*}
Combined the estimations above, we get
\begin{align*}
    \E\left[|Y_r - Y_s|^p\right]
    &\lesssim I_{p,f}([s,r])\\
    &\qquad + c_{\eqref{p-estimate}}\left[(L_{f,1})^p + (L_{f,2})^p\right]I_{p,T} (r - s)^p\\
    &\qquad + (L_{f,3})^p (r - s)^{p/2} CV_p([t,T])\\
    &\qquad + CV_p([t,T]),
\end{align*}
where the equivalence constants only relies on $p$.

\end{proof}

\smallskip

The path-regularity results for the processes $X$ and $Y$ naturally imply the 
time-regularity of the decoupling function $w$ (if $w$ exists):
\begin{rema}\label{rema:regularity:decouplingField}
    If the FBSDE \eqref{fbsde:equ} satisfies all the condition of Theorem \ref{theorem:path-regularity}
    with the case when the regular decoupling field $w$ exists, then, for all $t\leq r < s\leq T$ and $x\in\R^n$,
    the estimation for
    \begin{align*}
        \E\left[\left|w(r,x) - w(s,x)\right|^p\right],
    \end{align*}
    follows naturally from Theorem~\ref{theorem:path-regularity}. In fact, because the decoupling field $w$ is uniformly Lipschitz, thus
    \begin{align*}
        \E\left[\left|w(r,x) - w(s,x)\right|^p\right] 
        & \lesssim \E\left[\left|w(r,x) - w(s,X^{r,x}_s)\right|^p\right] + \E\left[\left|w(s,X^{r,x}_s) - w(s,x)\right|^p\right]\\
        & \lesssim \E\left[|Y^{r,x}_r - Y^{r,x}_s|^p\right] + (L_w)^p \E\left[|X^{r,x}_s - x|^p\right],
    \end{align*}
    where $(X^{r,x},Y^{r,x},Z^{r,x})$ is the solution to FBSDE \eqref{fbsde:equ} when the equation starts at time $r$ with the initial value $X_r = x\in\R^n$,
    the last inequality holds due to the properties of the regular decoupling field.
    By Theorem \ref{theorem:path-regularity}, we can get the aimed estimate.

    Moreover, if we further assume that 
    \begin{itemize}
        \item $(b,\mu,f,g)$ are deterministic functions;
        \item $(b^0,\mu^0,f^0)$ are uniformly bounded by $M\in\R^+$ on $[t,T]$.
    \end{itemize}
    Then $w$ is a deterministic function by Blumenthal's zero-one law, and 
    \begin{align*}
        \left|w(r,x) - w(s,x)\right|^p
        &\leq (c_1 + c_2|x|^p) (r - s)^{p/2},
    \end{align*}
    where the constant $c_1$, $c_2$ and $c_3$ only rely on $p$, $T - t$, all the Lipschitz constants and $M$.
    For the special case when the coefficient $\mu$ does not rely on the $Z$ process,
    our result goes back to the previous estimates obtained by A.~Fromm and P.~Imkeller in \cite{Fromm:Imkeller:13}.
\end{rema}

\subsection{$\D_{1,2}$ differentiability}
In this section, we consider the coupling function 
$$\vz_r \equiv r.$$
We write
$X^{\vz_r}$ the element in the class $\cC_{\vz_r}(X)$.

Theorem \ref{theo:Malliavin_sobolev} shows that the $2$-coupling variance can be used to characterize the
Malliavin Sobolev space $\D_{1,2}$.
For the FBSDE \eqref{fbsde:equ}, we obtain the following result.
Note that in this section, we always assume Assumption~\ref{ass:linear:diffusion}.

\begin{theo}\label{thm:Sobolev}
    If the FBSDE \eqref{fbsde:equ} satisfies all the condition of Theorem \ref{theorem:path-regularity} with $p = 2$,
    and there is a constant $M\in (0,\fz)$ such that
    \begin{equation}\label{equ:Sobolev:cond:g}
        \sup_{r\in (0,1]}\frac{\E\left[|g^{\vz_r}(X_T) - g(X_T)|^2 + U_2([t,T])\right]}{r^2} \leq M
    \end{equation}
    where $U_2((t,T])$ is defined as in Theorem \ref{thm:small:decoupling} with respect to the function $\vz_r$.
    If $\xi\in \D_{1,2}$, then, for all $s\in [t,T]$, $X_s$, $Y_s\in \D_{1,2}$.
\end{theo}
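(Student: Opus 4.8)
The plan is to reduce the statement to the characterization of $\D_{1,2}$ given in Theorem~\ref{theo:Malliavin_sobolev} and then feed into it the coupling-variance estimate of Theorem~\ref{thm:large:decoupling} (or Theorem~\ref{thm:small:decoupling}, depending on which case of Theorem~\ref{theorem:path-regularity} is assumed). Fix $s\in[t,T]$ and take $\vz_r\equiv r$ for $r\in(0,1]$. By Theorem~\ref{theo:Malliavin_sobolev}, to conclude $X_s\in\D_{1,2}$ (and likewise $Y_s\in\D_{1,2}$) it suffices to bound
\[
\sup_{r\in(0,1]}\frac{\E_{\ol\P}\bigl[|X_s-X_s^{\vz_r}|^2\bigr]}{r^2}<\infty,
\]
where $X_s^{\vz_r}\in\cC_{\vz_r,0}(X_s)$. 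Since $X$ and $Y$ are continuous and adapted, Proposition~\ref{prop:2.12}(2) allows me to realize the coupled processes $X^{\vz_r},Y^{\vz_r}$ so that $X_s^{\vz_r}\in\cC_0(X_s)$ and $Y_s^{\vz_r}\in\cC_0(Y_s)$ for \emph{every} $s$; this is the step that legitimizes replacing a pointwise coupling of the random variable $X_s$ by the time-$s$ value of the coupled process entering $CV_2$. With this realization, the pointwise variances are dominated by the running suprema in \eqref{equ:decouplingVariance}, namely
\[
\E\bigl[|X_s-X_s^{\vz_r}|^2\bigr]\le\E\Bigl[\sup_{u\in[t,T]}|X_u-X_u^{\vz_r}|^2\Bigr]\le CV_2([t,T]),
\]
and similarly for $Y_s$. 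Thus the whole problem reduces to proving $\sup_{r\in(0,1]}CV_2([t,T])/r^2<\infty$.

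For this I would apply Theorem~\ref{thm:large:decoupling} with $p=2$ and $\vz=\vz_r$, then divide the four contributions on the right-hand side of \eqref{equ:large:decoupling} by $r^2$ and treat each separately. The initial term $\E[|\xi-\xi^{\vz_r}|^2]/r^2$ is uniformly bounded in $r$ precisely because $\xi\in\D_{1,2}$, again by Theorem~\ref{theo:Malliavin_sobolev}. The combination $\E[|g(X_T)-g^{\vz_r}(X_T)|^2+U_2([t,T])]/r^2$ is bounded by the constant $M$ by hypothesis~\eqref{equ:Sobolev:cond:g}.

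The remaining term $S_2([t,T])/r^2$ is where the constant shape of $\vz_r$ is exploited. Because $(\vz_r(u))^2\equiv r^2$, one has $\int_t^T(\vz_r(u))^2\,\od u=r^2(T-t)$ and the factor $r^2$ pulls out of both summands of $S_2$, so that $S_2([t,T])/r^2$ equals
\[
(T-t)\,\E\Bigl[(L_{\mu,1})^2\sup_{u\in[t,T]}|X_u|^2+(L_{\mu,2})^2\sup_{u\in[t,T]}|Y_u|^2\Bigr]+\E\Bigl[\int_t^T(\sz^0(u))^2\,\od u\Bigr],
\]
which is independent of $r$. Its first term is finite by the a priori estimate \eqref{p-estimate}, and its second term is finite because $A$ is linear, whence $A(\cdot,0)=0$ and $\sz^0=\mu^0\in\cL^{2,2}$ by Assumption~\ref{fbsde:basic:assume:p:general}. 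Combining the three bounds yields $\sup_{r\in(0,1]}CV_2([t,T])/r^2<\infty$, and the theorem follows for every $s$.

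I expect the genuine obstacle to be not the term-by-term bookkeeping, which is routine once the estimate \eqref{equ:large:decoupling} is in hand, but the consistency step: ensuring that the element $X_s^{\vz_r}$ used in the $\D_{1,2}$ criterion can be taken as the time-$s$ marginal of the coupled process used to build $CV_2([t,T])$. This requires invoking the continuity and adaptedness of the coupled processes (Proposition~\ref{prop:2.12}(2)) together with the compatibility of $\cC_0$ and $\cC_T$ from Proposition~\ref{prop:2.5}(3); without it the domination $\E[|X_s-X_s^{\vz_r}|^2]\le CV_2([t,T])$ would not be justified.
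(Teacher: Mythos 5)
Your proposal is correct and follows essentially the same route as the paper's proof: both reduce the claim to Theorem~\ref{theo:Malliavin_sobolev}, bound the pointwise variances $\E[|X_s-X_s^{\vz_r}|^2]+\E[|Y_s-Y_s^{\vz_r}|^2]$ by the right-hand side of the coupling-variance estimate (Theorem~\ref{thm:small:decoupling} resp.\ Theorem~\ref{thm:large:decoupling}), and then handle the three contributions identically --- $\xi\in\D_{1,2}$ for the initial term, hypothesis \eqref{equ:Sobolev:cond:g} for the $g$ and $U_2$ terms, and the observation that $(\vz_r(u))^2\equiv r^2$ pulls out of $S_2([t,T])$ leaving a finite constant by the a priori estimate. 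The only difference is that you make explicit the marginal-consistency step (via Proposition~\ref{prop:2.12}(2) and Proposition~\ref{prop:2.5}(3)) justifying $\E[|X_s-X_s^{\vz_r}|^2]\le CV_2([t,T])$, which the paper leaves implicit in its notational conventions.
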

\begin{proof}
    We only show the proof for the case when the time interval is small enough.
    The other case is similar.

    For any $r\in (0,1]$, by Lemma \ref{lemm:p-estimate}, one has
    \begin{align}\label{equ:Sobolev:S}
        \frac{S_{2}([t,T])}{r^2}
        & = (T - t)\E\left[(L_{\mu,1})^2\sup_{s\in [t,T]}|X_s|^2 + (L_{\mu,2})^2\sup_{s\in [t,T]}|Y_s|^2\right]\notag\\
        &\qquad + \E\left[\int^T_t (\mu^0(s))^2\,\od s\right] < \fz,
    \end{align}
    where $S_p([t,T])$ is defined as in Theorem \ref{thm:small:decoupling}.

    Thus, by \eqref{equ:Sobolev:cond:g}, \eqref{equ:Sobolev:S}
    and Theorem \ref{thm:small:decoupling}, we conclude that
    \begin{align*}
        & \frac{\E\left[|X^{\vz_r}_s - X_s|^2\right] + \E\left[|Y^{\vz_r}_s - Y_s|^2\right]}{r^2}\\
        & \quad\lesssim \frac{\E\left[|\xi^{\vz_r} - \xi|^2\right] + \E\left[|g^{\vz_r}(X_T) - g(X_T)|^2 + U_2((t,T])\right] + S_2((t,T])}{r^2}\\
        & \quad\lesssim \sup_{r\in (0,1]}\frac{\E\left[|\xi^{\vz_r} - \xi|^2\right]}{r^2} + M + C,
    \end{align*}
    for any $r\in (0,1]$. Here the constant $C$ is the upper bound given by \eqref{equ:Sobolev:S}.
    By Theorem~\ref{theo:Malliavin_sobolev}, we complete the proof of Theorem \ref{thm:Sobolev}.
\end{proof}
\smallskip

For some special form of the coefficients, the condition \eqref{equ:Sobolev:cond:g}
can be checked without knowing the exact form of the solution:

\begin{lemm}
    If the coefficients $(b,\mu,f,g)$ are deterministic, then \eqref{equ:Sobolev:cond:g} holds.
\end{lemm}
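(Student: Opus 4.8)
The plan is to exploit the elementary fact that the coupling operator $\cC_{\vz}$ leaves invariant any random object that does not depend on the underlying Brownian motion. Concretely, I would first record the auxiliary observation that if $\psi:[0,T]\to\R^M$ is a deterministic measurable function, then, viewed as an element of $L^0([0,T]\times\Omega)$ that is constant in $\oz$, it satisfies $\cC_{\vz}(\psi)=\psi$. This follows from the law-preservation property in Proposition~\ref{prop:2.5}(1): a deterministic process has a degenerate (point-mass) law at each time, and the only random variable with a point-mass law at a value $c$ is the constant $c$ itself almost surely. Equivalently, one may invoke Proposition~\ref{prop:2.5}(3) together with the fact that $\cC_0$ fixes constants, so that $\cC_T(\psi)(r,\cdot)\in\cC_0(\psi(r))=\{\psi(r)\}$ for $\frac{\lz}{T}$-almost every $r$.

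Next I would apply this invariance to each coefficient. By hypothesis $b,\sz,A,f$ and $g$ are deterministic, so for every fixed $(x,y,z)$ the maps $b(\cdot,x,y,z)$, $\sz(\cdot,x,y)$, $A(\cdot,z)$ and $f(\cdot,x,y,z)$ are deterministic functions of time, while $g(x)$ is a constant. By the notational convention in Section~\ref{sec:specialSetting}(3), which characterizes $h^{\vz}(\cdot,x)$ as an element of $\cC_{\vz}(h(\cdot,x))$, the invariance above forces $b^{\vz}=b$, $\sz^{\vz}=\sz$, $A^{\vz}=A$, $f^{\vz}=f$ and $g^{\vz}=g$, up to the relevant null sets. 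Consequently every increment $\Dz h:=h-h^{\vz}$ with $h\in\{b,\sz,A,f\}$ vanishes identically, so $U_2([t,T])=0$ almost surely, and likewise $g^{\vz}(X_T)=g(X_T)$, whence $|g^{\vz}(X_T)-g(X_T)|^2=0$.

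Putting these together, the numerator in \eqref{equ:Sobolev:cond:g} equals $\E[0+0]=0$ for every $r\in(0,1]$, so the supremum is $0$ and the required bound holds with any constant $M\in(0,\fz)$. This completes the argument.

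The only step requiring genuine care — and the one I expect to be the main, if modest, obstacle — is the rigorous justification that $\cC_{\vz}$ fixes deterministic functions, since the coupling operator is defined only up to null sets and acts on equivalence classes. One must verify that the informal identification $\xi\sim f(r,W^0)\mapsto f(r,W^1)$ genuinely collapses to the identity when $f$ is independent of its Brownian argument, and that this identification is compatible with the pointwise evaluation $g^{\vz}(X_T)$ of a coupled random field at the original terminal value $X_T$. Once this invariance is pinned down, everything else is immediate.
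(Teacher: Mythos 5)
Your proposal is correct and takes essentially the same approach as the paper, whose entire proof is the one-line observation that $U_2([t,T]) = g^{\vz_r}(X_T) - g(X_T) = 0$ for all $r\in(0,1]$ when the coefficients are deterministic. The extra care you devote to justifying that $\cC_{\vz}$ fixes deterministic objects (via the law-preservation property of Proposition~\ref{prop:2.5}) is precisely the fact the paper takes for granted, so your write-up is simply a more detailed version of the same argument.
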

\begin{proof}
    This follows from the fact that 
    $$U_2((t,T]) = g^{\vz_r}(X_T) - g(X_T) = 0,$$
    for all $r\in (0,1]$.
\end{proof}

\smallskip

For a more general form, we introduce the ``fractional potential condition'' introduced in \cite{Geiss:Zhou:24}.
\begin{assume}\label{assme:fractionalPotential}
    There are
    \begin{itemize}
        \item[(1)] predictable processes $V_1,V_2,V_3$ and an $\cF_T$-measurable random variable $V_4$;
        \item[(2)] deterministic and measurable functions $R_1,R_2,R_3,R_4$;
        \item[(3)] constants $\bz_i\in \R$ with $i\in\{1,2,3,4\}$, 
    \end{itemize}
    such that, for all $\tz = (x,y,z) \in \R^n\times\R^m\times\R^{m\times d}$ and coupling function $\vz$,
    the coefficient $(b,\mu,f,g)$ satisfies
    \begin{align*}
        \left|b^{\vz}(\cdot,\tz) - b(\cdot,\tz)\right| &\leq \left|V_1^{\vz}(\cdot) - V_1(\cdot)\right|^{\bz_1}R_1(\tz);\\
        \left|\mu^{\vz}(\cdot,\tz) - \mu(\cdot,\tz)\right| &\leq \left|V_2^{\vz}(\cdot) - V_2(\cdot)\right|^{\bz_2}R_2(\tz);\\
        \left|f^{\vz}(\cdot,\tz) - f(\cdot,\tz)\right| &\leq \left|V_3^{\vz}(\cdot) - V_3(\cdot)\right|^{\bz_3}R_3(\tz);\\
        \left|g^{\vz}(x) - g(x)\right| &\leq |V_4^{\vz}(\oz) - V_4(\oz)|^{\bz_4} R_4(x).
    \end{align*}
\end{assume}
Then we obtain a criterion for \eqref{equ:Sobolev:cond:g}.
\begin{lemm}
    Assume that the FBSDE \eqref{fbsde:equ} satisfies all the condition of Theorem \ref{theorem:path-regularity} with $p \geq 2$,
    and it admits a unique $\cL^p$-solution $(X,Y,Z)$.
    If its coefficients satisfy Assumption \ref{assme:fractionalPotential}, and one of the following conditions holds:
    \begin{itemize}
        \item[(\textbf{Case I})] $p > 2$,
        $R_i$ is uniformly Lipschitz in $\tz$ which does not depend on $z$, and
        \begin{align}\label{equ:condition:sobolev:lip}
            \sup_{u\in [t,T]} \sup_{r\in (0,1]} \frac{\left[\E\left[\left|V_i^{\vz_r} (u) - V_i(u)\right|^{\frac{2\bz_i p}{p - 2}}\right]\right]^{\frac{p - 2}{p}}}{r^2} < \fz,
        \end{align}
        for all $i\in \{1,2,3,4\}$.
        \smallskip

        \item[(\textbf{Case II})] $p \geq 2$, $R_i$ is uniformly bounded, and
        \begin{align}\label{equ:condition:sobolev:bounded}
            \sup_{u\in [t,T]} \sup_{r\in (0,1]} \frac{\E\left[\left|V_i^{\vz_r} (u) - V_i(u)\right|^{2\bz_i}\right]}{r^2} < \fz,
        \end{align}
        for all $i\in \{1,2,3,4\}$.
    \end{itemize}
    Then \eqref{equ:Sobolev:cond:g} holds.
\end{lemm}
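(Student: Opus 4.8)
The plan is to bound separately the four contributions to the numerator of \eqref{equ:Sobolev:cond:g}, namely the terminal term $\E[|g^{\vz_r}(X_T)-g(X_T)|^2]$ and the three integral terms comprising $\E[U_2([t,T])]$ (the $b$-, $\mu$- and $f$-terms), and in each case to insert the pointwise estimate furnished by Assumption~\ref{assme:fractionalPotential} and then factor the resulting expectation into a ``solution part'' built from $R_i(\Tz)$ and a ``potential part'' $|V_i^{\vz_r}-V_i|^{\bz_i}$. Throughout I abbreviate $\Dz V_i(u):=V_i^{\vz_r}(u)-V_i(u)$. Since $(X,Y,Z)$ is an $\cL^p$-solution, $\|\Tz\|_{p,*}<\fz$, so that $\E[\sup_{u\in[t,T]}|X_u|^p]$ and $\E[\sup_{u\in[t,T]}|Y_u|^p]$ are finite; this $\cL^p$-control of the running suprema of $X$ and $Y$ is the only structural input needed from the solution.

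First I would record the pointwise bounds. Assumption~\ref{assme:fractionalPotential} gives, for $h\in\{b,\mu,f\}$, the estimate $|\Dz h(u,\Tz_u)|\le|\Dz V_i(u)|^{\bz_i}R_i(\Tz_u)$ at each time $u$, and the analogous bound $|g^{\vz_r}(X_T)-g(X_T)|\le|\Dz V_4|^{\bz_4}R_4(X_T)$ at the terminal time. For the $b$- and $f$-terms, which enter $U_2$ as a squared time integral, I would pass the square inside by Cauchy--Schwarz in time, e.g. $\bigl(\int_t^T|\Dz b(u,\Tz_u)|\,\od u\bigr)^2\le (T-t)\int_t^T|\Dz V_1(u)|^{2\bz_1}R_1(\Tz_u)^2\,\od u$; for the $\mu$-term the square already sits inside the integral, so no such step is needed. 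After taking expectations and using Fubini, the estimation reduces, for each fixed $u$, to controlling the single quantity $\E[|\Dz V_i(u)|^{2\bz_i}R_i(\Tz_u)^2]$ (respectively $\E[|\Dz V_4|^{2\bz_4}R_4(X_T)^2]$ for $g$), divided by $r^2$ and uniformly in $u$ and $r$.

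In \textbf{Case I} ($p>2$) I would split this quantity by H\"older's inequality with the conjugate exponents $p/2$ and $p/(p-2)$, obtaining the factorisation $\bigl(\E[R_i(\Tz_u)^p]\bigr)^{2/p}\bigl(\E[|\Dz V_i(u)|^{2\bz_i p/(p-2)}]\bigr)^{(p-2)/p}$. Because $R_i$ is uniformly Lipschitz and independent of $z$, one has $R_i(\Tz_u)\lesssim 1+|X_u|+|Y_u|$, so $\E[R_i(\Tz_u)^p]$ is bounded uniformly in $u$ by $1+\|\Tz\|_{p,*}^p$; the remaining potential factor, once divided by $r^2$, is exactly what \eqref{equ:condition:sobolev:lip} keeps bounded uniformly in $u\in[t,T]$ and $r\in(0,1]$. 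In \textbf{Case II} ($p\ge2$) the argument is shorter: since $R_i$ is uniformly bounded, $\E[|\Dz V_i(u)|^{2\bz_i}R_i(\Tz_u)^2]\lesssim\E[|\Dz V_i(u)|^{2\bz_i}]$, which divided by $r^2$ is controlled by \eqref{equ:condition:sobolev:bounded}. In either case the terminal term is handled identically, using $\E[|X_T|^p]\le\E[\sup_u|X_u|^p]<\fz$ in Case~I. Summing the four resulting bounds and taking the supremum over $r\in(0,1]$ then yields \eqref{equ:Sobolev:cond:g}.

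The delicate point, and the reason for the precise hypotheses, is keeping every estimate uniform in the time variable $u$. This forces the specific H\"older exponent $2\bz_i p/(p-2)$ on the potential factor in Case~I: it is dictated by pairing with the top $\cL^p$-moment of the solution, and that moment is available \emph{only} because $R_i$ does not involve $z$, so that $R_i(\Tz_u)$ is dominated by the running suprema of $X$ and $Y$ rather than by the pointwise value $Z_u$ (for which no $\cL^p$-control exists). When $p=2$ this exponent degenerates to infinity, which is precisely why Case~I requires $p>2$ and why the uniformly bounded alternative of Case~II is provided to reach the endpoint $p=2$ relevant to Theorem~\ref{thm:Sobolev}.
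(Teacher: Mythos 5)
Your proposal is correct and follows essentially the same route as the paper's own proof: bound each of the four terms via Assumption \ref{assme:fractionalPotential}, use Cauchy--Schwarz in time for the $b$- and $f$-integrals, then apply H\"older with exponents $p/2$ and $p/(p-2)$ to factor the potential part $\E[|V_i^{\vz_r}-V_i|^{2\bz_i p/(p-2)}]^{(p-2)/p}$ from the solution part, which is finite uniformly in $u$ precisely because $R_i$ is Lipschitz in $(x,y)$ only. The only difference is cosmetic: the paper writes out Case I and leaves Case II implicit, whereas you treat both, and your remark on why the exponent degenerates at $p=2$ correctly explains the role of the bounded alternative.
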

\begin{proof}
    We only show the proof for \textbf{Case I}. For $r\in (0,1]$, by H\"older's inequality
    \begin{align*}
        \frac{\E\left[|g^{\vz_r}(X_T) - g(X_T)|^2\right]}{r^2}
        &\lesssim \frac{\E\left[|V_4^{\vz_r} (\oz) - V_4(\oz)|^{2\bz_4} (R_4(X_T))^2\right]}{r^2}\\
        &\lesssim \frac{\E\left[|V_4^{\vz_r} (\oz) - V_4(\oz)|^{2\bz_4} \right]}{r^2}\\
        &\qquad + \frac{\E\left[|V_4^{\vz_r} (\oz) - V_4(\oz)|^{2\bz_4} |X_T|^2\right]}{r^2}\\
        &\lesssim \frac{\left[\E\left[|V_4^{\vz_r} (\oz) - V_4(\oz)|^{\frac{2\bz_4 p}{p - 2}} \right]\right]^{\frac{p - 2}{p}}}{r^2} \left(1 + \E\left[|X_T|^p\right]\right),
    \end{align*}
    where the equivalence constants only rely on the Lipschitz constants for $R_4$.
    Moreover, by the definition for $U_2([t,T])$ and H\"older's inequality,
    it follows that
    \begin{align*}
        \E\left[U_2([t,T])\right]
        &\lesssim \int^T_t \E\left[|b^{\vz_r}(u,X_u,Y_u,Z_u) - b(u,X_u,Y_u,Z_u)|^2\right]\,\od u\\
        &\qquad + \int^T_t \E\left[|\mu^{\vz_r}(u,X_u,Y_u,Z_u) - \mu(u,X_u,Y_u,Z_u)|^2\right]\,\od u\\
        &\qquad + \int^T_t \E\left[|f^{\vz_r}(u,X_u,Y_u,Z_u) - f(u,X_u,Y_u,Z_u)|^2\right]\,\od u,
    \end{align*}
    where the equivalence constant only relies on $T$.

    By Assumption \ref{assme:fractionalPotential} and the Lipschitz property for the functions $R_1$, noticing that $R_1$ does not depend on $z$,
    we conclude that
    \begin{align*}
        &\E\left[|b^{\vz_r}(u,X_u,Y_u,Z_u) - b(u,X_u,Y_u,Z_u)|^2\right]\\
        &\quad\lesssim \left[\E\left[|V_1^{\vz_r} (\oz) - V_1(\oz)|^{\frac{2\bz_1 p}{p - 2}} \right]\right]^{\frac{p - 2}{p}}\left(1 + \E\left[|X^*_T|^p + |Y^*_T|^p\right]\right),
    \end{align*}
    where the equivalence constants only rely on the Lipschitz constants for $R_1$.
    Using the same method, we obtain the similar estimation for $\mu^{\vz_r} - \mu$ and $f^{\vz_r} - f$.
    Combining all the results, by \eqref{equ:condition:sobolev:lip}, the condition \eqref{equ:Sobolev:cond:g} holds.
\end{proof}

\section{Appendix}
\subsection{Construction of the Coupling Operator}\label{sec:Appendix:coupling}
In this part, we construct the coupling operator defined in Section~\ref{sec:coupling operator}.

    The construction of $\cC$ proceeds as follows:
    
    \begin{enumerate}
    \item[Step 1)] For any equivalence class $[f]\in L^0(R\times\Omega^0,\cR\otimes\cF^0,\rho\otimes\P^0;\R)$,
    \cite[Lemma 2.1]{Geiss:Ylinen:21} allows us to choose a representative $f$ that is $\cR\otimes\sigma(W^0_s:s\in[0,T])$-measurable.
    \item[Step 2)] Define the maps $J^i : R\times \Omega^i \to R\times\R^\bN$ by
          $$
          J^i(r,\omega^i) := \Bigl(r, \bigl(\xi^i_k(\omega^i)\bigr)_{k\in\bN}\Bigr),
          $$
          where $(\xi^i_k)_{k\in\bN}$ is an enumeration of the random variables
          $$
          \left\{\int_0^T h_\ell(s)\,\od W^i_{s,j}\,:\,\ell\in\bN,\; j=1,\ldots,N\right\},
          $$
          and $(h_\ell)_{\ell=0}^\infty$ denote the Haar functions, $L_2$ normalized to be an orthonormal basis in $L_2([0,T])$.
    \item[Step 3)] By applying \cite[Lemma 2.2]{Geiss:Ylinen:21} (the factorization lemma), we obtain a measurable function 
          \[
          \widehat{f}:R\times \R^\bN\to \R,
          \]
          such that
          \[
          f = \widehat{f}\circ J^0.
          \]
    \item[Step 4)] Define the coupling operator by setting
        \[
        \cC([f]) = \cC(f) := \Bigl[\widehat{f}\circ J^1\Bigr] \in L^0(R\times\Omega^1,\cR\otimes\cF^1,\rho\otimes\P^1;\R).
        \]
    \end{enumerate}

    \subsection{Transference of SDEs}\label{sec:Appendix:trans}
    Let $\cC_0$ and $\cC_T$ be the coupling operator defined in Section \ref{sec:coupling operator}.
    Then the following theorem holds.
    \begin{theo}[Theorem 3.3 in \cite{Geiss:Ylinen:21}]\label{theo:transference}
        Let $n,d\ge 1$ and $(\Omega,\F,\P,W,\cP_T)$ be one of the quintuples
        $$
        (\Omega^i,\F^i,\P^i,W^i,\cP_T^i),\quad i = 0,1,
        $$
        as defined in Section~\ref{sec:coupling operator}. Consider the It\^o process
        \begin{equation}\label{eqn:theo:transference}
         R_t = \xi  + \int_t^T f(s,K_s) \,\od s - \sum_{j=1}^d \int_t^T g_{j}(s,K_s) \,\od W_{s,j},
        \end{equation}
        where the following conditions are satisfied:
        \begin{enumerate}[$(S1)$]
            \item $\xi\in \cL^0(\Omega)$.
            \item The functions $f$ and each $g_j$ (for $j=1,\ldots,d$) are $(\mathcal{P_T},\mathcal{B}(C(\R^n)))$-measurable.
            \item The process $R=(R_t)_{t\in[0,T]}$, with $R_t:\Omega\to\R$, is continuous and $\F$-adapted.
            \item The process $K=(K_t)_{t\in[0,T]}$, with $K_t:\Omega\to\R^n$, is $\cP_T$-measurable.
            \item The integrability condition
            \[
            \E\int_0^T \Bigl[\,|f(s,K_s)| + |g(s,K_s)|^2\Bigr]\,\od s < \infty
            \]
            holds.
            \item The collections $(\xi,f,g,K,R,W)$ satisfies equation \eqref{eqn:theo:transference} for all $t\in[0,T]$ $\P$-a.s.
        \end{enumerate}
        
        Now, assume that the septuple $(\xi^0,f^0,g^0,K^0,R^0,W^0,\cP^0_T)$ satisfies conditions $(S1)$--$(S6)$. Let
        $\xi^1 \in \cC_0(\xi^0)$, and 
        $$
        f^1 \in \cC_T(f^0), \quad g_{j}^1 \in \cC_T(g^0_{j}),
        $$
        be $(\cP_T^1,\mathcal{B}(C(\R^n)))$-measurable modifications.
        Let
        $$
        R^1 \in \cC_T(R^0)
        $$
        be a continuous and $\F^1$-adapted process,
        and $K^1 \in \cC_T(K^0)$ be $\cP_T^1$-measurable.
        
        Then, the septuple
        $$
        (\xi^1,f^1,g^1,K^1,R^1,W^1,\cP_T^1)
        $$
        satisfies conditions $(S1)$--$(S6)$.
        \end{theo}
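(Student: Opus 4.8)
The plan is to verify conditions $(S1)$--$(S6)$ for the coupled septuple $(\xi^1,f^1,g^1,K^1,R^1,W^1,\cP^1_T)$ one at a time, reserving the bulk of the work for the equation $(S6)$. Conditions $(S1)$--$(S4)$ are almost immediate from the hypotheses and the structural properties of the coupling operator: $(S1)$ holds since $\cC_0$ maps into $L^0(\Omega^1)$; $(S2)$ holds because $f^1$ and each $g^1_j$ are chosen as $(\cP^1_T,\cB(C(\R^n)))$-measurable modifications, whose existence is guaranteed by Proposition~\ref{prop:2.12}(1) together with the structure-preserving Proposition~\ref{prop:3.4}; and $(S3)$, $(S4)$ hold by the very choice of $R^1$ (continuous and $\F^1$-adapted) and $K^1$ ($\cP^1_T$-measurable). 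For $(S5)$ I would use distribution preservation: by Proposition~\ref{prop:2.5}(2)--(3) one has $f^1(\cdot,K^1_\cdot)\in\cC_T(f^0(\cdot,K^0_\cdot))$ and $|g^1(\cdot,K^1_\cdot)|^2\in\cC_T(|g^0(\cdot,K^0_\cdot)|^2)$, so by Proposition~\ref{prop:2.5}(1) their integrals against $\tfrac{\lambda}{T}\otimes\P$ agree, whence the finiteness asserted in $(S5)$ transfers to the coupled side.

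The heart of the argument is $(S6)$. I would introduce the defect
$$ D^i_t := R^i_t - \xi^i - \int_t^T f^i(s,K^i_s)\,\od s + \sum_{j=1}^d \int_t^T g^i_j(s,K^i_s)\,\od W^i_{s,j}, \qquad i\in\{0,1\}, $$
so that $(S6)$ for $i=0$ reads $D^0_t=0$ $\P^0$-a.s.\ for every $t$, and the goal is $D^1_t=0$ $\P^1$-a.s.\ for every $t$. Since $\cC_0$ is a linear, isometric bijection sending the zero class to the zero class, it suffices to show $D^1_t\in\cC_0(D^0_t)$, i.e.\ that each summand of $D^1_t$ is the $\cC_0$-image of the corresponding summand of $D^0_t$; linearity (Proposition~\ref{prop:2.5}(2)) then assembles these into $D^1_t\in\cC_0(0)=\{0\}$. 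Two of the four summands are already in place: $\xi^1\in\cC_0(\xi^0)$ by hypothesis, and $R^1_t\in\cC_0(R^0_t)$ for \emph{every} $t$, which I would obtain from $R^1\in\cC_T(R^0)$ via Proposition~\ref{prop:2.5}(3) (valid for a.e.\ $t$) upgraded to all $t$ using the pathwise continuity of $R^0$ and $R^1$ through Proposition~\ref{prop:2.12}(2). It thus remains to transfer the two integrals.

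For the Lebesgue integral I would approximate $f^0(\cdot,K^0_\cdot)$ in $\cL^{1,1}(\lambda\otimes\P^0)$ by simple predictable processes; the time-integral of such a process is a finite linear combination of $\cF^0$-measurable variables, to which Proposition~\ref{prop:2.5}(2)--(3) applies directly, giving that its $\cC_0$-image is the analogous combination on the coupled side. Passing to the limit in the convergence-in-probability metric, in which $\cC_0$ is continuous and which is implied by $\cL^1$-convergence, yields $\int_t^T f^1(s,K^1_s)\,\od s\in\cC_0\bigl(\int_t^T f^0(s,K^0_s)\,\od s\bigr)$; here distribution preservation of $\cC_T$ ensures that the coupled simple approximations converge to $f^1(\cdot,K^1_\cdot)$ in $\cL^{1,1}(\lambda\otimes\P^1)$.

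The main obstacle is the stochastic integral, where the coupling and the It\^o integral must be interchanged. My plan is: (i) approximate $g^0_j(\cdot,K^0_\cdot)$ in $\cL^{2,2}(\lambda\otimes\P^0)$ by simple predictable processes $g^{0,n}=\sum_k \eta^0_k\,\1_{(s_k,s_{k+1}]}$ with $\eta^0_k$ being $\cF^0_{s_k}$-measurable; (ii) for such integrands the It\^o integral is the finite sum $\sum_k \eta^0_k\,(W^0_{s_{k+1},j}-W^0_{s_k,j})$, and since the construction of the coupling operator (Appendix~\ref{sec:Appendix:coupling}, through the Haar expansion of $\1_{[0,s]}$) gives $\cC_0(W^0_{s,j})=W^1_{s,j}$, Proposition~\ref{prop:2.5}(2) shows that its $\cC_0$-image is $\sum_k \eta^1_k\,(W^1_{s_{k+1},j}-W^1_{s_k,j})=\int_t^T g^{1,n}\,\od W^1_{s,j}$, where $g^{1,n}=\cC_T(g^{0,n})$ remains simple with $\cF^1_{s_k}$-measurable coefficients $\eta^1_k\in\cC_0(\eta^0_k)$ by Proposition~\ref{prop:2.5}(3); (iii) pass to the limit via the It\^o isometry: $\int_t^T g^{0,n}\,\od W^0 \to \int_t^T g^0_j(s,K^0_s)\,\od W^0$ in $\cL^2(\P^0)$, hence their $\cC_0$-images converge (since $\cC_0$ preserves the $\cL^2$-distance by Proposition~\ref{prop:2.5}(1)), while distribution preservation of $\cC_T$ gives $g^{1,n}\to g^1_j(\cdot,K^1_\cdot)$ in $\cL^{2,2}(\lambda\otimes\P^1)$ so that $\int_t^T g^{1,n}\,\od W^1 \to \int_t^T g^1_j(s,K^1_s)\,\od W^1$. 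Matching the two limits yields $\int_t^T g^1_j(s,K^1_s)\,\od W^1_{s,j}\in\cC_0\bigl(\int_t^T g^0_j(s,K^0_s)\,\od W^0_{s,j}\bigr)$, completing $(S6)$. The delicate point I expect to spend the most care on is that all approximations are coupled \emph{consistently}---that $\cC_T$ sends the chosen simple integrands to simple integrands with properly adapted, coupled coefficients---and that every limit is taken in a single topology ($\cL^2$, equivalently convergence in probability) in which $\cC_0$ is continuous.
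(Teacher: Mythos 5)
This theorem is not proved in the paper at all: as the attribution in its header indicates, it is imported verbatim from \cite{Geiss:Ylinen:21} (Theorem 3.3 there) and stated in the appendix purely as a tool, so there is no in-paper proof to compare against. Judged on its own merits, your proposal is a correct reconstruction of the argument of the cited source: conditions $(S1)$--$(S5)$ from the structural properties of $\cC_0$ and $\cC_T$, and condition $(S6)$ via the defect process, linearity of the coupling (Proposition~\ref{prop:2.5}(2)), the identification of $W^1_{s,j}$ as a representative of $\cC_0(W^0_{s,j})$ through the Haar construction of Appendix~\ref{sec:Appendix:coupling}, and approximation of both the Lebesgue and the It\^o integral by simple predictable processes, with all limits taken in topologies (convergence in probability, $\cL^1$, $\cL^2$) in which the coupling operator is continuous because it preserves distributions.

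Two technical points deserve more care than your citations suggest, though both are fixable within the framework. First, the substitution property $f^1(\cdot,K^1_\cdot)\in\cC_T\bigl(f^0(\cdot,K^0_\cdot)\bigr)$, which you invoke for $(S5)$ and use implicitly throughout $(S6)$, does not follow directly from Proposition~\ref{prop:2.5}(2)--(3) as stated: those concern Borel functions of finitely many \emph{real-valued} random variables, whereas here a $C(\R^n)$-valued random function is evaluated along a coupled process. One needs an intermediate approximation (replace $K^0$ by processes taking finitely many values, apply Proposition~\ref{prop:2.5}(2) piecewise, then pass to the limit using the continuity of $x\mapsto f^0(s,\omega,x)$); this is exactly where the hypothesis that $f$ and $g_j$ take values in $C(\R^n)$ enters, and it is a genuine step of the original proof. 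Second, the $\cF^1_{s_k}$-measurability of the coupled coefficients $\eta^1_k$ of your simple integrands is not given by Proposition~\ref{prop:2.5}(3) alone; it requires the filtration-compatibility of the coupling, which can be extracted from Proposition~\ref{prop:2.12}(1) together with right-continuity and completeness of the augmented Brownian filtration. With these two repairs your outline is sound and coincides with the proof in \cite{Geiss:Ylinen:21}.
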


        \subsection{Compensate for $\cL^p$ theory for FBSDE}\label{sec:Appendix:Lp}
        \begin{proof}[Proof of Lemma \ref{lemm:p-estimate}]
            We first prove \eqref{p-estimate}.
            Consider the operator
            $$T: \cL^{\fz,p}_C([t,T]\times\Oz,\F,\P;\R^n) \to \cL^{\fz,p}_C([t,T]\times\Oz,\F,\P;\R^n)$$
            where, for any $x\in\cL^{\fz,p}_C$, $T(x) := X$ and $(X,Y,Z)$ is the solution of 
            \begin{align}\label{fbsde:equ:operator}
                \begin{cases}
                    X_s = \xi + \int^s_t b(u,x_u,Y_u,Z_u)\,\od u + \int^s_t \mu(u,x_u,Y_u,Z_u)\,\od W_u;\\
                    \ \\
                    Y_s = g(x_T) + \int^T_s f(u,x_u,Y_u,Z_u)\,\od u - \int^T_s Z_u\,\od W_u.
                  \end{cases} 
            \end{align}
            By the proof of \cite[Theorem 2.3]{Yong:20}, $T$ is well-defined, and there is $\dz > 0$ and $c_{\eqref{fbsde:operator}}\in (0,1)$ such that,
            if $T - t \leq \dz$, for any $x_1,x_2\in\cL^{\fz,p}_C$,
            \begin{equation}\label{fbsde:operator}
                \left\|T(x_1) - T(x_2)\right\|_{\cL^{\fz,p}} \leq c_{\eqref{fbsde:operator}} \left\|x_1 - x_2\right\|_{\cL^{\fz,p}}.
            \end{equation}
            For the solution of \eqref{fbsde:equ}, also denoted by $(X,Y,Z)$, then $T(X) = X$.
            Thus, \eqref{fbsde:operator} and triangle inequality show that
            \begin{equation}\label{equ:est}
                \left\|X\right\|_{\cL^{\fz,p}} \leq \frac{1}{1 - c_{\eqref{fbsde:operator}}}\left\|X^0\right\|_{\cL^{\fz,p}},
            \end{equation}
            where $(X^0,Y^0,Z^0)$ is the solution of \eqref{fbsde:equ:operator} with $x = 0$.
            By \cite[Proposition 3.2]{Briand:Delyon:03}, the processes $(Y^0,Z^0)$ satisfy
            \begin{equation}\label{bsde:est}
                \E\left[\left|(Y^0)^*_T\right|^p + \left(\int^T_t |Z^0_u|^2\,\od u\right)^{p/2}\right] \leq c_{\eqref{bsde:est}} \E\left[|g(0)|^p + \left(\int^T_t |f^0(u)|\,\od u\right)^p\right]
            \end{equation}
            where the constant $c_{\eqref{bsde:est}}$ only relies on $p$, $T$ and the Lipschitz constants $L_{f,2}, L_{f,3}$ and $L_g$.
        
            For the forward part of \eqref{fbsde:equ:operator}, by Assumption \ref{coefficient:assume:general} and BDG-inequality, it holds that
            \begin{align*}
                &\E\left[|(X^0)^*_T|^p\right]\\
                &\quad\leq 3^{p - 1}\E\left[|\xi|^p + \left(\int^T_t |b(u,0,Y^0_u,Z^0_u)|\,\od u\right)^p\right]\\
                &\quad\qquad  + 3^{p - 1}\overline{K}_p \E\left[\left(\int^T_t |\mu(u,0,Y^0_u,Z^0_u)|^2\,\od u\right)^{p/2}\right]\\
                &\quad\leq 3^{p - 1}\E\left[|\xi|^p + \left(\int^T_t |b^0(u)| + L_{b,2}|Y^0_u| + L_{b,3}|Z^0_u|\,\od u\right)^p\right]\\
                &\quad\qquad + 3^{3p/2 - 1}\overline{K}_p \E\left[\left(\int^T_t |\mu^0(u)|^2 + (L_{\mu,2})^2 |Y^0_u|^2 + (L_{\mu,3})^2 |Z^0_u|^2\,\od u\right)^{p/2}\right]\\
                &\quad\leq c_p \E\left[|\xi|^p + \left(\int^T_t |b^0(u)|\,\od u\right)^p\right]\\
                &\quad\qquad + c_p \E\left[(L_{b,2})^p T^{p}|(Y^0)^*_T|^p +  (L_{b,3})^p(T - t)^{p/2}\left(\int^T_t |Z^0_u|^2\,\od u\right)^{p/2} \right]\\
                &\quad\qquad + c_p \E\left[\left(\int^T_t |\mu^0(u)|^2\,\od s\right)^{p/2} + (L_{\mu,2})^p T^{p/2}|(Y^0)^*_T|^p + (L_{\mu,3})^p \left(\int^T_t |Z^0_u|^2\,\od u\right)^{p/2}\right]\\
                &\quad = c_p \E\left[|\xi|^p + \left(\int^T_t |b^0(u)|\,\od s\right)^p + \left(\int^T_t |\mu^0(u)|^2\,\od u\right)^{p/2}\right]\\
                &\quad\qquad + c_p \left[(L_{b,2})^pT^p + (L_{\mu,2})^pT^{p/2} + (T - t)^{p/2}(L_{b,3})^p + (L_{\mu,3})^p\right]\\
                &\quad\qquad\qquad \times \E\left[|(Y^0)^*_T|^p + \left(\int^T_t |Z^0_u|^2\,\od u\right)^{p/2}\right],
            \end{align*}
            where the equivalence constant $c_p > 0$ only relies on $p$.
            Combining this with \eqref{bsde:est} and \eqref{equ:est},
            the term $\left\|X\right\|_{\cL^{p}_C}$ can be controlled by the right-hand side of \eqref{p-estimate}.
        
            Moreover, using the similar method in the estimation for \eqref{bsde:est}, we further conclude that
            \begin{align*}
                &\E\left[\left|Y^*_T\right|^p + \left(\int^T_t |Z_u|^2\,\od u\right)^{p/2}\right]\notag\\
                &\quad \lesssim \E\left[|g(X_T)|^p + \left(\int^T_t |f(u,X_u,0,0)|\,\od s\right)^p\right]\notag\\
                &\quad \lesssim \E\left[|g(0)|^p + \left(\int^T_t |f^0(u)|\,\od u\right)^p + [(L_g)^p + (L_{f,1})^p(T - t)^p] |X^*_T|^p\right],
            \end{align*}
            where the equivalence constants only depend on $p$.
            With the estimates for $\left\|X\right\|_{\cL^{p}_C}$, we complete the proof for \eqref{p-estimate}.

            \smallskip

            Next, we prove the variance property \eqref{p-var:equ}.
            Set 
            $$\Dz X_s := X_s - \wt{X}_s,\quad \Dz Y_s := Y_s - \wt{Y}_s\quad\text{and}\quad \Dz Z_s := Z_s - \wt{Z}_s,$$
    for any $s\in [t,T]$. It holds that
    \begin{align}\label{equ:var:FBSDE}
        \begin{cases}
        \Dz X_s
        & = \xi - \wt{\xi} + \int^s_t b(u,X_u,Y_u,Z_u) - \wt{b}(u,\wt{X_u},\wt{Y_u},\wt{Z_u})\,\od u\\
        &\quad\quad + \int^s_t \mu(u,X_u,Y_u,Z_u) - \wt{\mu}(u,\wt{X_u},\wt{Y_u},\wt{Z_u})\,\od W_u;\\
        \Dz Y_s
        & = g(X_T) - \wt{g}(\wt{X}_T) + \int^T_s f(u,X_u,Y_u,Z_u) - \wt{f}(u,\wt{X_u},\wt{Y_u},\wt{Z_u})\,\od u\\
        &\quad\quad - \int^T_s \Dz Z_u\,\od W_u.
        \end{cases}
    \end{align}
    Now, for the function $h\in\{b,\mu,f\}$, we set $d_h$ the random function defined as
    \begin{equation*}
        d_h (u,x,y,z) := \wt{h}(u,x + \wt{X_u},y + \wt{Y_u},z + \wt{Z_u}) - \wt{h}(u,\wt{X_u},\wt{Y_u},\wt{Z_u}).
    \end{equation*}
    We claim that, $d_h$ admits the same properties as $\wt{h}$. In fact
    \begin{itemize}
        \item The measurability follows from the property of $\wt{h}$;
        \item $d_h(u,0,0,0) \equiv 0$;
        \item The Lipschitz property follows from the fact that
        \begin{align*}
            & \left|d_h(u,x_1,y_1,z_1) - d_h(u,x_2,y_2,z_2)\right|\\
            & \quad = \left|\wt{h}(u,x_1 + \wt{X_u},y_1 + \wt{Y_u},z_1 + \wt{Z_u}) - \wt{h}(u,x_2 + \wt{X_u},y_2 + \wt{Y_u},z_2 + \wt{Z_u})\right|\\
            & \quad \leq L_{\wt{h},1} |x_1 - x_2| + L_{\wt{h},2} |y_1 - y_2| + L_{\wt{h},3}|z_1 - z_2|.
        \end{align*}
    \end{itemize}

    We rewrite \eqref{equ:var:FBSDE} as
    \begin{align}\label{equ:var:FBSDE:re}
        \begin{cases}
        \Dz X_s 
        & = \xi - \wt{\xi} + \int^s_t d_b(u,\Dz X_u,\Dz Y_u,\Dz Z_u) + \Dz b(u,\Tz_u)\,\od u\\
        &\quad\quad + \int^s_t d_{\mu}(u,\Dz X_u,\Dz Y_u,\Dz Z_u) + \Dz \mu(u,\Tz_u)\,\od W_u;\\
        \Dz Y_s
        & = d_g(\Dz X_T) + \Dz g(X_T) + \int^T_s d_f(u,\Dz X_u,\Dz Y_u,\Dz Z_u) + \Dz f(u,\Tz_u)\,\od u\\
        &\quad\quad - \int^T_s \Dz Z_u\,\od W_u.
        \end{cases}
    \end{align}
    Thus, by \eqref{p-estimate} and \eqref{equ:var:FBSDE:re}, we obtain the estimation \eqref{p-var:equ}.
    This completes the proof of Lemma~\ref{lemm:p-estimate}.

        \end{proof}
        
        \section{Acknowledgments}
This research was supported by the Finnish centre of excellence in Randomness and Structures of the Academy of Finland, funding decision number: 346305.
The author would like to thank Prof. Stefan Geiss for his valuable advice and insightful suggestions on this research.

\end{document}